\theoremstyle{plain}
\newtheorem{theo}{Théorème}[section]
\newtheorem{prop}[theo]{Proposition}
\newtheorem{lemm}[theo]{Lemme}
\newtheorem{coro}[theo]{Corollaire}
\theoremstyle{definition}
\newtheorem{defi}[theo]{Definition}
\newtheorem{conj}[theo]{Conjecture}
\newtheorem{rema}[theo]{Remarque}
\newcommand{\mf}{\mathfrak}
\newcommand{\A}{\mathfrak{A}}
\newcommand{\B}{\mathfrak{B}}
\newcommand{\p}{\mathfrak{P}}
\title{...}
\date {\today}
\author{}
\def\Q{{\bf Q}}
\def\Z{{\bf Z}}
\def\C{{\bf C}}
\def\N{{\bf N}}
\def\R{{\bf R}}
\def\zp{{\Z_p}}
\def\qp{{\Q_p}}
\begin{document}
\title{Représentations supercuspidales:\\
Comparaison des constructions de Bushnell-Kutzko et Yu   }
 \author{Arnaud Mayeux}
\maketitle

\section*{Résumé}

  Ce texte est une réponse à la question suivante: Quelles sont les méthodes pour construire des représentations complexes supercuspidales des groupes réductifs $p-$adique et existe-il des liens entre elles ? On donnera un aperçu des constructions de Bushnell-Kutzko et de celle de Yu. Puis, étant donnée une strate simple maximale modérée, un caractère simple $\theta$ associée à cette strate et une représentation de niveau zéro d'un sous groupe de $G$, on associe  une donnée de Yu générique et donc une représentation construite par Yu, cette représentation correspond à un type simple donné par une $\beta - extension $ de $\theta$.
 
\section*{Introduction}

Soit $G$ un groupe algébrique réductif connexe défini sur un corps local non archimédien $F$ de caractéristique résiduelle $p$. Notons $\mathcal{M}(G)$ la catégorie des représentations lisses admissibles de $G(F)$. Si $M$ est un sous-groupe de Levi de $G$, il existe un foncteur d'induction $i_M^G:\mathcal{M}(M) \to \mathcal{M}(G)$. Une représentation est dite supercuspidale si elle ne se réalise pas comme un sous quotient d'une représentation $ i_M^G(\pi)$ où $\pi \in \mathcal{M}(M)$ et $M$ est un sous-groupe de Levi propre de $G$. Ainsi les représentations supercuspidales de $G(F)$ sont celles qui ne proviennent pas de représentations d'un sous-groupe de Levi propre de $G(F)$. Les construire est donc un problème fondamental.
Les méthodes les plus abouties pour construire des représentations supercuspidales sont d'une part celles de Adler-Kim-Yu  et d'autre part celles de Bushnell-Kutzko-Sécherre-Stevens. 

La construction de Yu (2001) fournit pour un groupe algébrique réductif connexe arbitraire des représentations supercuspidales, ces représentations ont un trait modéré comme on le verra par la suite. L'immeuble de Bruhat-Tits \cite{brti} et les filtrations de Moy-Prasad sont utilisés de manière cruciale. La construction n'est exhaustive que lorsque $p$ est suffisamment grand en comparaison avec la taille du groupe. La construction de Yu est en partie inspirée des travaux de Adler.

La construction de Bushnell et Kutzko (1993) fournit toutes les représentations supercuspidales de
$\mathrm{GL}_{N}$ sans hypothèse sur $p$. Cette construction est élaborée, cela est dû au trait sauvage des extensions de corps qui interviennent. La construction de Stevens (2008) fournit toutes les
représentations supercuspidales des groupes classiques pourvu que $p$ soit impair. Elle est écrite dans
le même langage que la théorie de Bushnell et Kutzko. La construction de Sécherre (2008) fournie toutes les représentations supercuspidales de $\mathrm{GL}_N(D)$. Blasco et Blondel ont construit des représentations supercuspidales de $\mathrm{G}_2$ en utilisant le langage des strates, ces représentations sont modérées. Van Dinh Ngô a construit des représentations supercuspidales des groupes spinoriels.

L'ancêtre commun de ces constructions est la construction de Howe (1977) des représentations
supercuspidales modérées de $\mathrm{GL}_N$ : dans le cas où $p$ ne divise pas $N$ la construction de Howe est exhaustive.  Les constructions de Yu et Bushnell-Kutzko sont des vastes améliorations dans des directions différentes de la construction originelle de Howe. Koch et Zink avaient des constructions analogues pour $D^{*}$.

Concernant l'organisation du document, on commence par rappeler la définition d'une représentation supercuspidale et on énonce une proposition qui invite à construire des représentations supercuspidales irréductibles par induction compacte (§ \ref{resu}). Ensuite, on résume la construction de Bushnell-Kutzko (§\ref{buku}), étant donné certains objets, on définit plusieurs groupes: $H^1(\beta , \A)$, $J^0(\beta,\A)$ et $E^{\times } J^0(\beta,\A)$. Sur le premier groupe est défini par Bushnell-Kutzko un ensemble de caractères $\theta$ dit simples. Sur le second est défini, étant donné un caractère $\theta$ de $H^1(\beta,\A)$, un ensemble de représentations $\kappa$ contenant $\theta$ appelé $\beta -extension$ de $\theta$. On donne alors la définition d'un type simple. Des représentations supercuspidales sont alors obtenues par induction compacte de représentations de $E^{\times } J^0(\beta,\A)$.
On résume ensuite la construction de Yu (§\ref{yu}), étant donné certains objets, Yu définit plusieurs groupes: $K^d_+ $ et $K^d$. Yu définit ensuite un caractère sur $K_+^d$ puis une représentation $\rho _d$ sur $K^d$ contenant ce dernier caractère. L'induite compacte de $\rho _d$ est alors supercuspidale est irréductible.
On montre ensuite certaines propriétés utiles pour donner le lien recherché (§\ref{stsm},\ref{emrs},\ref{egcg}). 
 Étant donnée une strate simple maximale modérée et un caractère simple $\theta$ associée à cette strate et une représentation de niveau zéro d'un sous groupe de $G$, on factorise (§\ref{cscg}) le caractère simple en produit de caractères qui donnent naissance à des caractères génériques. En se donnant de plus une représentation cuspidale d'un certain $\mathrm{GL}_f(k_E)$, on associe (§\ref{yube}) une donnée de Yu générique et donc une représentation construite par Yu, on verra que cette représentation correspond à un type simple donné par une $\beta -extension$  de $\theta$.

  Pour faire ce lien, on a commencé par comparer d'une part les constructions de Howe \cite{HO}, \cite{MO} et Bushnell-Kutzko et d'autre part celles de Howe et Yu, on a ensuite obtenu par concaténation le lien entre la construction de Bushnell-Kutzko et celle de Yu. 
  
  On indiquera dans la suite par les symboles $\widetilde{BKH}$ , $\widetilde{HY}$ et $\widetilde{BKHY}$ que ces stratégies ont été utilisées.

\section*{Remerciements}
  Je remercie ma directrice de thèse Anne-Marie Aubert pour m'avoir permis de consacrer du temps à l'étude des constructions des représentations supercuspidales, je la remercie aussi pour son aide et ses références. Je remercie Guy Henniart pour son soutien et ses conseils. Je remercie Jeffrey Adler, Colin Bushnell et Vincent Sécherre pour avoir répondu à mes questions. Je remercie Corine Blondel, Peter Latham, Fiona Murnaghan et Shaun Stevens  pour diverses discussions intéressantes.

 \section*{Notations}
 
 On fixe $F$ un corps local non archimédien. On fixe $\bar{F}$ une clôture algébrique de $F$. On fixe une uniformisante $\pi _F$ de $F$ et $\psi$ un caractère additif de conducteur l'idéal maximal de l'anneau des entiers $\mathfrak{o} _F$ de $F$.  \\
 
 Si $E/F$ est une extension finie de $F$, et $\pi _E$ est une uniformisante de $E$, on note $\nu _ E$ la valuation sur $E$ tel que $\nu _ E (\pi _ E)=1$. On note $\mf{p} _E $ l'idéal maximal de l'anneau des entiers $\mf{o} _E$ de $E$. On note $e(E\mid F) $ et $f(E\mid F)$ les indices de ramification et d'inertie. On note $\mathrm{ord}$ l'unique valuation de $\bar{F}$ qui prolonge $\nu _ F$.\\
 
  Si $G$ est un groupe réductif défini sur $F$, on note $Z(G)$ le centre de $G$. On note $\mathcal{B}(G,F)$ l'immeuble élargi de Bruhat-Tits, $G(F)$ agit sur $\mathcal{B}(G,F)$. Si $y\in \mathcal{B}(G)$ et $\mathbf{r}\geq 0 \in \tilde{\R}$ (cf \cite[§1]{Yu} pour la définition de $\Tilde{\R }$, on a $\R \cup \R + \subset \tilde{\R}$), on note $G(F)_{y,\mathbf{r}}$ le groupe défini dans les paragraphes $1$ et $2$ de \cite{Yu}, de même on note $\mathfrak{g}(F)_{y,\mathbf{r}} \subset \mathrm{Lie}(G)$ et $\mathfrak{g}(F)^*_{y,\mathbf{r}} \subset \mathrm{Lie}^*(G)$ les filtrations de l'algèbre de Lie et du dual. On note aussi, lorsque $0\leq\mathbf{s}\leq \mathbf{r} \in \tilde{\R}$, $G(F)_{y,\mathbf{s} :\mathbf{r}}=G(F)_{y,\mathbf{s}}/ G(F)_{y,\mathbf{r} }$ et $\mathfrak{g}(F)_{y,\mathbf{s} :\mathbf{r}}=\mathfrak{g}(F)_{y,\mathbf{s}}/ \mathfrak{g}(F)_{y,\mathbf{r} }$.

   Si $y\in \mathcal{B}(G,F)$, on note $[y]$ la projection de $y$ sur l'immeuble réduit $\mathcal{B}^{red}(G,F)$ et $G(F)_{[y]}$ le sous groupe de $G(F)$ qui fixe $[y]$.
  
  Soit $G$ un groupe et $H$ un sous-groupe de $G$. Si $\rho$ est une représentation complexe de $K$ et $g\in G$, on note ${}^g \rho $ la représentation $x\mapsto \rho (g^{-1} x g)$ de $gKg^{-1}$.
  
  On note $I_g(\rho)$ l'espace vectoriel $Hom_{gKg^{-1} \cap K} ({}^g\rho,\rho).$ 
  
  On note $I_G(\rho)=\{ g \in G \mid I_g(\rho) \not = 0 \} \subset G$, on l'appelle l'entrelacement de $\rho $ dans $G$.

\section{Représentations supercuspidales}\label{resu}
 
 Soit $G$ un groupe réductif défini sur $F$ et soit $P=MN$ un sous groupe de parabolique de $G$  ($M$ est un sous-groupe de Levi de $P$ et $N$ est le radical unipotent de $P$). Notons $\mathcal{M}(G)$ la catégorie des représentations lisses irréductibles de $G$. On note $i_P ^G :\mathcal{M}(M) \to \mathcal{M}(G)$ et $r_P^G: \mathcal{M}(G) \to \mathcal{M}(M)$ les foncteurs d'induction et restriction parabolique normalisés. Rappelons la définition d'une représentation supercuspidale:
 
 \begin{defi} Une représentation $\pi \in \mathcal{M}(G)$ est dite supercuspidale si $r_P^G(\pi)=0$ pour tout sous groupe parabolique propre de $G$. 
 \end{defi}
 
 On a alors la proposition classique:
 
 \begin{prop} (Jacquet)
 
 Une représentation $\pi$ est supercuspidale si et seulement si tous ses coefficients matriciels sont à support compact modulo le centre.
 \end{prop}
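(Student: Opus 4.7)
La proposition est un théorème classique attribué à Jacquet. Mon plan est de traiter séparément les deux implications en utilisant le lemme de Jacquet comme pivot : pour $V$ l'espace de $\pi$, $v \in V(N) := \ker(V \surj V_N)$ si et seulement s'il existe un sous-groupe ouvert compact $N_0 \subset N$ tel que $\int_{N_0} \pi(n) v \, dn = 0$ dans $V$.

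$(\Leftarrow)$ Supposons tous les coefficients matriciels à support compact modulo $Z$. Soit $P = MN$ un sous-groupe parabolique propre ; il s'agit de montrer $r_P^G(\pi) = V_N = 0$, ou encore $V(N) = V$. Pour $v \in V$ fixé par un ouvert compact $K \subseteq G$, l'admissibilité de $\pi$ donne une base finie $\tilde v_1, \ldots, \tilde v_r$ de $\tilde V^K$. Chaque coefficient $c_{v, \tilde v_j}$ étant à support compact modulo $Z$ et $N \cap Z$ étant compact (car $Z \subseteq M$), la restriction $c_{v, \tilde v_j}|_N$ est à support compact sur $N$. On choisit alors un ouvert compact $N_0 \subset N$ contenant tous ces supports. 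L'étape délicate est l'annulation effective de l'intégrale $w := \int_{N_0} \pi(n) v \, dn$ : je l'obtiendrais en utilisant un élément $a$ du centre de $M$ contractant strictement $N$, de sorte qu'un changement de variables fournisse une relation du type $\pi(a) w = \delta_P(a)^{-1} \int_{aN_0 a^{-1}} \pi(n)(\pi(a)v)\, dn$ ; itérée dans l'espace fini-dimensionnel $V^{K''}$ (pour $K''$ convenablement choisi), cette récursion force $w$ à s'annuler.

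$(\Rightarrow)$ Réciproquement, supposons $\pi$ supercuspidale, donc $V_N = 0$ pour tout parabolique propre $P = MN$. Pour $v \in V$ et $\tilde v \in \tilde V$ fixés, la décomposition de Cartan $G = K A^+ K$ (avec $A$ un tore déployé maximal et $A^+$ une chambre positive) ramène à contrôler $c_{v, \tilde v}$ sur $A^+$ modulo $A \cap Z$. La formule asymptotique de Casselman exprime alors $c_{v, \tilde v}(a)$, pour $a$ profondément dans $A^+$, en fonction d'une projection sur un certain module de Jacquet $V_N$ associé au parabolique adapté à la position de $a$. L'hypothèse de supercuspidalité force cette projection à être nulle, et l'on conclut que $c_{v, \tilde v}$ s'annule en dehors d'un compact modulo $Z$.

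L'obstacle principal se situe dans la direction $(\Leftarrow)$, à l'étape du passage de la \emph{stabilisation} de l'intégrale (conséquence immédiate du support compact) à son \emph{annulation effective}. L'argument de l'élément contractant, combiné à l'admissibilité, est indispensable pour éviter un raisonnement purement combinatoire qui échouerait. L'autre direction est plus technique mais standard dès que l'on dispose des résultats asymptotiques de Casselman sur les coefficients matriciels.
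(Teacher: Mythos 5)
Le texte ne démontre pas cette proposition~: il renvoie simplement à la référence [Renard, VI.2], et votre plan reproduit fidèlement la démonstration standard qu'on y trouve (lemme de Jacquet et élément strictement dominant pour une implication, asymptotiques de Casselman pour l'autre). L'architecture est donc la bonne, et la direction $(\Rightarrow)$ est correctement esquissée. En revanche, la direction $(\Leftarrow)$ comporte une vraie lacune, exactement à l'endroit que vous signalez vous-même. La relation $\pi(a)w=\delta_P(a)^{-1}\int_{aN_0a^{-1}}\pi(n)\pi(a)v\,dn$ est une tautologie (simple changement de variables) qui n'utilise nulle part l'hypothèse de support compact~; l'itérer dans un $V^{K''}$ de dimension finie ne produit par elle-même aucune annulation. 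Accessoirement, $w=\int_{N_0}\pi(n)v\,dn$ n'est pas fixé par $K$ en général, de sorte que tester $w$ contre une base de $\tilde V^K$ ne suffirait pas à conclure $w=0$.

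Le mécanisme qui fait réellement aboutir l'argument est le suivant. On prend $K$ admettant une factorisation d'Iwahori $K=N_K^-M_KN_K$ relative à $P$, et $a\in Z(M)$ strictement dominant~; on note $e_K$ l'idempotent de l'algèbre de Hecke associé à $K$. Alors (Casselman) la projection canonique $V^K\to(V_N)^{M_K}$ est surjective et entrelace l'opérateur $\pi(e_K)\pi(a)\pi(e_K)$ de $V^K$ avec un multiple scalaire de l'opérateur \emph{inversible} $\pi_N(a)$ de $(V_N)^{M_K}$~; c'est ici qu'intervient l'identité $(e_Kae_K)*(e_Kae_K)=c\,(e_Ka^2e_K)$ valable pour $a$ strictement dominant. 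Le support compact modulo $Z$ des coefficients entraîne $\pi(e_K)\pi(a^k)\pi(e_K)=0$ sur l'espace de dimension finie $V^K$ pour $k$ grand, donc $\pi_N(a)^k=0$ sur $(V_N)^{M_K}$, donc $(V_N)^{M_K}=0$ par inversibilité~; en faisant décroître $K$ on obtient $V_N=0$. C'est ce contenu qui se cache derrière l'affirmation \emph{cette récursion force $w$ à s'annuler}~; sans lui, l'implication n'est pas établie.
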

 \begin{proof} \cite[VI.2]{Rena}
\end{proof}
Soit $K$ un sous groupe ouvert de $G$, on note $\mathrm{c-ind}_K^G$ le foncteur d'induction compacte. 
On déduit alors de la proposition précédente:
\begin{prop} Soit $K$ un sous groupe ouvert et compact modulo le centre de $G$, soit $\rho$ une représentation lisse irréductible de $K$ et soit $\pi=\mathrm{c-ind}_K^G(\rho)$.

Les assertions suivantes sont équivalentes:
\begin{enumerate}
\item[($i$)] $I_G(\rho)=G \setminus K$
\item[($ii$)] $\pi$ est irréductible et supercuspidale.
\end{enumerate}
\end{prop}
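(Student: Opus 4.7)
Le plan consiste à se ramener à un calcul de type Mackey--Frobenius de $\End_G(\pi)$. L'adjonction entre l'induction compacte et la restriction fournit $\End_G(\pi) \cong \Hom_K(\rho,\pi|_K)$ ; en appliquant à $\pi|_K$ la décomposition de Mackey indexée par les doubles classes $K\backslash G/K$, puis la réciprocité de Frobenius dans chaque terme, on obtient l'isomorphisme canonique
\begin{equation*}
\End_G(\pi) \;\cong\; \bigoplus_{g \in K\backslash G/K} \Hom_{K\cap gKg^{-1}}({}^g\rho,\rho) \;=\; \bigoplus_{g} I_g(\rho).
\end{equation*}
La double classe triviale contribue $I_e(\rho) = \End_K(\rho) = \C$ d'après le lemme de Schur appliqué à la représentation irréductible $\rho$.

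Pour $(i)\Rightarrow(ii)$, l'hypothèse annule tout terme avec $g \notin K$, donc $\End_G(\pi) = \C$. Pour en déduire l'irréductibilité, j'exploiterais la structure cyclique de $\pi$ : le plongement canonique $\iota:W_\rho \hookrightarrow \pi$ (les fonctions à support dans $K$) engendre $\pi$ comme $G$-module, tandis que l'évaluation $K$-équivariante $\epsilon:\pi \to W_\rho$, $f\mapsto f(e)$, est scindée par $\iota$. Soit $V\subset\pi$ une sous-représentation non nulle ; en translatant convenablement une fonction $f\in V$ non nulle, on voit que $\epsilon(V)$ est non nul, donc, par irréductibilité de $\rho$, $\epsilon(V)=W_\rho$ tout entier. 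Combiné à $\End_G(\pi)=\C$ et au fait que $\iota(W_\rho)$ engendre $\pi$, ceci force $V=\pi$. La supercuspidalité résulte alors du critère de Jacquet de la proposition précédente : $K$ étant compact modulo le centre et toute fonction $f\in\pi$ ayant un support compact modulo $K$, tous les coefficients matriciels de $\pi$ sont à support compact modulo $Z(G)$.

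L'implication réciproque $(ii)\Rightarrow(i)$ est bien plus courte : toute représentation irréductible supercuspidale est admissible, donc le lemme de Schur fournit $\End_G(\pi) = \C$, et en comparant avec la décomposition de Mackey--Frobenius --- dont le terme trivial absorbe déjà la totalité de $\C$ --- on conclut que $I_g(\rho) = 0$ pour tout $g \notin K$. L'obstacle technique principal est sans doute l'étape d'irréductibilité dans $(i)\Rightarrow(ii)$ : l'égalité $\End_G(\pi) = \C$ n'implique pas formellement l'irréductibilité, et il faut véritablement utiliser l'interaction entre le générateur cyclique $\iota(W_\rho)$ et la surjectivité de $\epsilon|_V$ pour exclure toute sous-représentation propre non nulle.
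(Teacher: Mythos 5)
Votre démonstration est correcte et suit essentiellement la même route que les références citées par l'article pour cette proposition (\cite[1.5]{Cara}) : identification de $\End_G(\pi)$ avec l'algèbre d'entrelacement supportée par $I_G(\rho)$ via Mackey--Frobenius, argument du générateur cyclique $\iota(W_\rho)$ pour l'irréductibilité, et support compact modulo le centre des coefficients pour la supercuspidalité via le critère de Jacquet. Le seul point laissé implicite est le scindage $K$-équivariant de la surjection $\epsilon|_V : V \twoheadrightarrow W_\rho$ (semi-simplicité des représentations lisses de $K$, compact modulo le centre, à caractère central fixé), qui permet de la relever par réciprocité de Frobenius en un endomorphisme non nul de $\pi$ d'image contenue dans $V$, donc scalaire, donc surjectif ; notez aussi que vous avez correctement lu l'assertion ($i$) comme $I_G(\rho)=K$, ce que le texte de l'énoncé écrit par erreur $G\setminus K$.
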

 \begin{proof}\cite[1.5]{Cara} \cite[2.1]{reyu}
 \end{proof}
 Cela permet de construire des représentations supercuspidales irréductibles, les constructions que l'on va ensuite étudier sont basées sur ce fait. Il est conjecturé que toutes les représentations supercuspidales irréductibles des groupes réductifs sont construites ainsi:
 
 \begin{conj} Soit $\pi \in \mathcal{M}(G)$ irréductible et supercuspidale. Il existe un sous groupe ouvert $K$ compact modulo le centre et une représentation lisse irréductible $\rho$ de $K$ tel que $\pi=\mathrm{c-ind}_K^G(\rho)$
 \end{conj}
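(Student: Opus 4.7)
Puisque l'énoncé est conjectural en toute généralité, mon plan est d'indiquer la stratégie inductive qui sous-tend les résultats partiels connus. On attacherait à toute $\pi \in \mathcal{M}(G)$ irréductible supercuspidale sa profondeur de Moy-Prasad
$$r(\pi) = \inf \bigl\{ s \geq 0 \mid \pi^{G(F)_{y, s+}} \neq 0 \text{ pour un certain } y \in \mathcal{B}(G,F) \bigr\},$$
qui est rationnelle et toujours atteinte, et je construirais le couple $(K, \rho)$ par récurrence sur cet invariant.

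Pour l'initialisation $r(\pi) = 0$, je choisirais un sommet $y$ tel que $\pi^{G(F)_{y, 0+}} \neq 0$, j'extrairais dans cet invariant une sous-représentation irréductible $\bar{\rho}$ du groupe réductif fini $G(F)_{y,0}/G(F)_{y,0+}$, et je montrerais qu'elle est cuspidale (grâce à la supercuspidalité de $\pi$ et à la description de la restriction de Jacquet via les filtrations de Moy-Prasad). On étendrait ensuite $\bar{\rho}$ au stabilisateur $K=G(F)_{[y]}$, et la vérification de $I_G(\rho)=K$, qui résulte de la cuspidalité finie et d'une décomposition de Bruhat-Tits, permettrait d'appliquer la proposition précédente.

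Pour l'étape de récurrence $r(\pi)>0$, je commencerais par extraire de $\pi$ un type minimal $\bigl(G(F)_{y, r(\pi)}, \chi\bigr)$. Via la dualité $\mathfrak{g}(F)^{*}_{y, -r:(-r)+} \simeq \Hom\bigl(G(F)_{y, r:r+}, \C^{\times}\bigr)$ fournie par le caractère additif $\psi$, $\chi$ correspondrait à une classe d'éléments dont on examinerait la partie semi-simple $X^*$. Si $X^*$ est \emph{générique} au sens de Yu, son centralisateur $G'$ est un sous-groupe réductif tordu de $G$, et $\pi$ serait l'image par la construction de Yu d'une représentation supercuspidale $\pi'$ de $G'(F)$ de profondeur strictement inférieure; on conclurait par récurrence. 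Si $X^*$ n'est pas générique, il faudrait le raffiner en une suite de caractères simples à la Bushnell-Kutzko-Stevens-Sécherre avant de pouvoir descendre.

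Le principal obstacle, et la raison pour laquelle l'énoncé reste conjectural, serait précisément ce raffinement dans le cadre sauvagement ramifié: produire à partir d'un caractère simple arbitraire de $H^1(\beta, \A)$ une donnée générique de Yu sans hypothèse sur $p$ semble actuellement hors de portée. Les résultats de Bushnell-Kutzko pour $\GL_N$, de Sécherre pour $\GL_N(D)$, de Stevens pour les groupes classiques avec $p$ impair, et de Kim-Fintzen pour les supercuspidales modérées, traitent chacun une partie de la difficulté; le présent texte apporte la brique manquante pour les strates simples maximales modérées, en exhibant explicitement la donnée de Yu générique associée à un caractère simple via une $\beta$-extension.
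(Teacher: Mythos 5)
L'énoncé que l'on vous a soumis est une \emph{conjecture}: le texte n'en donne aucune démonstration (et n'en prétend pas donner), il se contente d'énoncer juste après le théorème qui en recense les cas connus (Bushnell--Kutzko pour $\mathrm{GL}_N$ et $\mathrm{SL}_N$, Sécherre--Stevens pour $\mathrm{GL}_N(D)$ et les groupes classiques avec $p$ impair, Yu--Kim pour $p$ assez grand). Vous avez correctement identifié ce statut et vous ne prétendez pas non plus la démontrer; votre texte est un plan de bataille, pas une preuve, et il ne peut en être autrement en l'état actuel des connaissances. Il n'y a donc pas de comparaison possible avec une « preuve du papier » qui n'existe pas.

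Sur le fond, votre esquisse est fidèle à la stratégie effectivement suivie dans la littérature: récurrence sur la profondeur de Moy--Prasad, traitement du cas de profondeur nulle via une représentation cuspidale du quotient réductif fini $G(F)_{y,0}/G(F)_{y,0+}$ étendue à $G(F)_{[y]}$ (Moy--Prasad, Morris), puis, en profondeur positive, extraction d'un type minimal non raffiné, dualité avec $\mathfrak{g}(F)^*_{y,-r:(-r)+}$ via $\psi$, et descente vers un sous-groupe de Levi tordu lorsque l'élément dual est générique au sens de Yu. Vous localisez aussi correctement l'obstruction qui rend l'énoncé conjectural: le raffinement des types minimaux dans le cas sauvagement ramifié pour un groupe général. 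Deux réserves mineures: d'une part, dans le cas de profondeur nulle, l'irréductibilité de l'induite compacte exige de vérifier que l'entrelacement de $\rho$ est réduit à $K$ (ce que vous mentionnez), mais aussi de choisir convenablement le prolongement de $\bar{\rho}$ à $G(F)_{[y]}$, qui n'est pas unique; d'autre part, l'affirmation que « $\pi$ serait l'image par la construction de Yu d'une $\pi'$ de profondeur strictement inférieure » est précisément le point dur (exhaustivité), établi par Kim sous hypothèse sur $p$ et, plus récemment, par Fintzen sous l'hypothèse que $p$ ne divise pas l'ordre du groupe de Weyl — ce n'est pas un pas de récurrence gratuit.
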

 
 \begin{theo} La conjecture précédente est un théorème dans les cas suivants:
 \begin{enumerate}
\item[($i$)] (\cite{BK} \cite{bukusl} Bushnell-Kutzko 1993)\begin{enumerate} \item  $G=\mathrm{GL}_N$ \item $G=\mathrm{SL}_N$ \end{enumerate}

\item[($ii$)] (\cite{Sech} \cite{Stev} Sécherre, Stevens 2007) \begin{enumerate}\item $G=\mathrm{GL}_N(D)$ 

\item $G$ est un groupe classique  et la caractéristique du corps résiduel de $F$ n'est pas 2.
\end{enumerate}

\item[($iii$)] (\cite{Yu} \cite{Kim} Yu, Kim 2001-2006)

 $G$ est un groupe algébrique réductif connexe arbitraire et la caractéristique du corps résiduel de $F$ est suffisamment grande par rapport au groupe \footnote{cf \cite{Kim} pour un énoncé précis sur les conditions que doit vérifier la caractéristique résiduelle}.

\end{enumerate}

\end{theo}

Les constructions ($i$) et ($ii$) sont écrites dans le langage des strates qui repose sur la théorie algébrique des nombres.

La construction ($iii$) de Yu est écrite dans le langage des groupes algébriques. L'immeuble de Bruhat-Tits (\cite{brti}) et les filtrations de Moy-Prasad (\cite{Mopr}) prennent des places importantes. C'est Adler (\cite{Adle}) qui a utilisé la première fois ces outils pour construire des représentations supercuspidales de groupes réductifs connexes généraux par induction compacte.

L'objectif de ce texte est de comparer les constructions de Bushnell-Kutzko et Yu. On commence par faire un résumé de ces constructions.

\section{La construction de Bushnell et Kutzko}\label{buku}

  Bushnell et Kutzko  \cite{BK} construisent pour chaque représentation supercuspidale irréductible $\pi$ de $\mathrm{GL}_N(F)$, un sous groupe $K$ ouvert compact modulo le centre et une représentation $\Lambda$ de $K$ tels que $\pi = \mathrm{c-ind}_K^G(\Lambda)$. 
  
Il existe de nombreux textes qui résument cette construction, voir par exemple \cite{Bush}. Pour un texte plus historique voir \cite{Henn}.

\subsection{Strates}
Soit $V$ un $F-$espace vectoriel de dimension finie $N$. Soit $A=\mathrm{End}_F(V)$. Si $\A$ est un $\mathfrak{o} _F -$ordre héréditaire dans $A$, on note $\mathfrak{P}$ son radical, et on note $\nu _{\A}$ la valuation sur $\A$ donnée par $\nu _ {\A} (x) =\mathrm{max}\{k \in \Z \mid x \in \mathfrak{P} ^k \} $. Une strate dans $A$ est un quadruplet $[\mf{A},n,r,\beta]$ où $\A$ est un $\mathfrak{o} _F -$ordre héréditaire, $n>r$ sont des entiers et $\beta$ est un élément de $A$ tel que $\nu _{\A} (\beta) \geq -n$. On note $e(\A \mid \mathfrak{o}_F) $ la période de la suite de $\mathfrak{o}_F$-réseaux associée à $\A$ (\cite[1.1.2]{BK}) \\

Une strate est dite pure si:
\begin{enumerate}
\item[($i$)] L'algèbre $E:=F[\beta]$ est un corps.
\item[($ii$)] $E^{\times} \subset \mf{K}(\A)$.
\item[($iii$)] $\nu _{\A}(\beta)=-n$
\end{enumerate}

 Soit $[\mf{A},n,r,\beta]$ une strate pure, on définit pour $k\in \Z$:
 \begin{center}
 $\mf{N}_k(\beta, \A)=\{x \in \A \mid \beta x - x\beta \in \p ^k \}$ 
 \end{center}

 Posons $B=\mathrm{End}_{F[\beta]}$ et $\mathfrak{B}=B \cap \A$ et  définissons un entier $k_0(\beta , \A)$ \footnote{On notera parfois $k_{0F}(\beta,\A)$ si la clarté l'exige} :
 
 -Si $E=F$ on pose $k_0(\beta, \A ) = - \infty$
 
 -Si $E \not = F$ on pose $k_0(\beta, \A )= \mathrm{max}\{ k\in \Z \mid \mf{N}_k \not \subset \mf{B} + \p \}$\\
 
 Une strate $[\mf{A},n,r,\beta]$ est dite simple si elle est pure et si de plus $r<-k_0(\beta,\A)$.
 
 Les strates simples sont construites par itération à partir d'éléments minimaux définis ci-dessous via un procédé qui est l'objet de \cite[2.2,2.2.8,2.4,2.4.1]{BK}. Voici la définition d'un élément minimal donnant lieu à une strate simple ayant "1 itération".
  
 \begin{defi}
 Soit $E/F$ une extension finie.

Un élément $\beta \in E$ est dit minimal sur $F$ si les trois conditions suivantes sont vérifiées:
\begin{enumerate}
\item[$\bullet $] $F[\beta]=E$

\item[$\bullet$] $\mathrm{pgcd}(\nu_E(\beta),e(E\mid ,))=1$

\item[$\bullet$] $\pi_F^{-\nu_E(\beta)}\beta ^{e(E,F)}+\mathfrak{p}_E$ engendre le corps résiduel $k_E$ sur $k_F$.
\end{enumerate}
\end{defi}

 \begin{prop}\label{minider} Soit $[\A,n,n-1,\beta]$ une strate pure dans l'algèbre $A=\mathrm{End}_F(V)$.
 Les assertions suivantes sont équivalentes:
 \begin{enumerate}
 \item [$\bullet$] l'élément $\beta$ est minimal sur $F$
 \item[$\bullet$] $k_0(\beta,\A)=-n$
 \item[$\bullet$] la strate $[\A,n,n-1,\beta]$ est simple
 \end{enumerate}
 \end{prop}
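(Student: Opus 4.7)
On scinderait la proposition en deux équivalences: celle, presque tautologique, entre simplicité et $k_0(\beta, \A) = -n$, et celle, substantielle, entre minimalité et $k_0(\beta, \A) = -n$. Par définition, la simplicité de $[\A, n, n-1, \beta]$ équivaut à $n - 1 < -k_0(\beta, \A)$, soit $k_0(\beta, \A) \leq -n$. Or la strate étant pure, $\beta \in \p^{-n}$, si bien que tout commutateur $\beta x - x \beta$ avec $x \in \A$ appartient à $\p^{-n}$; ainsi $\mf{N}_{-n}(\beta, \A) = \A$. Lorsque $E \neq F$, l'inclusion $\A \supsetneq \B + \p$ est stricte (l'algèbre résiduelle $\A/\p$ étant strictement plus grosse que $\B/(\B \cap \p)$), donc $k_0(\beta, \A) \geq -n$, d'où l'égalité sous l'hypothèse de simplicité. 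Le cas $E = F$ est trivial par la convention $k_0 = -\infty$.

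Pour l'équivalence substantielle entre minimalité et $k_0(\beta, \A) = -n$, l'approche serait de calculer l'application adjointe $\ad(\beta) : x \mapsto \beta x - x \beta$ sur les pièces graduées de $\A$. Comme $E^{\times} \subset \mf{K}(\A)$, on choisirait une chaîne de réseaux $\scl$ définissant $\A$ et stable sous $\sco_E$, ce qui fait de $V$ un $E$-espace vectoriel et identifie $\B$ au $\sco_E$-ordre héréditaire associé. Cela fournit une décomposition de $\A/\p$ en une partie \emph{diagonale} $\B/(\B \cap \p)$ et une partie \emph{hors-diagonale} relativement à la $E$-structure.

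L'idée-clé est alors la suivante: $\pi_F^{-\nu_E(\beta)}\beta^{e(E \mid F)}$ est un élément de $\sco_E^{\times}$, et sa classe résiduelle dans $k_E$ détermine, par conjugaison, l'action de $\ad(\beta)$ sur chaque pièce graduée modulo $\p^{1-n}$. Les deux conditions arithmétiques de minimalité, à savoir $\gcd(\nu_E(\beta), e(E \mid F)) = 1$ et le fait que cette classe résiduelle engendre $k_E$ sur $k_F$, sont précisément celles qui forcent le noyau de cet opérateur à se réduire à la partie diagonale. Concrètement, la coprimalité empêche les décalages d'indice de la graduation de créer des éléments parasites dans $\mf{N}_{1-n}$, tandis que la génération du corps résiduel assure que le centralisateur de l'opérateur dans $\gr(\A)$ est exactement $\gr(\B)$. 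On en déduit $\mf{N}_{1-n}(\beta, \A) \subset \B + \p$, soit $k_0(\beta, \A) = -n$. Réciproquement, l'échec de l'une de ces conditions fournit explicitement des éléments de $\mf{N}_{1-n}$ hors de $\B + \p$, donnant $k_0(\beta, \A) > -n$.

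Le principal obstacle réside dans le calcul concret de $\ad(\beta)$ sur $\gr(\A)$: il faut combiner le théorème de structure des ordres héréditaires, la description de $\gr(\A)$ comme algèbre de matrices sur $k_E$, et une analyse soigneuse des indices de graduation pour vérifier que les deux conditions arithmétiques de minimalité sont précisément celles qui interviennent, ni plus, ni moins. Ce calcul délicat mais standard est mené à bien dans \cite[§1.4]{BK}, dont on suivrait le canevas.
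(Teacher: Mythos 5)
Your proposal is correct and follows essentially the same route as the paper: the paper's entire proof is the citation of \cite[1.4.15]{BK}, and your sketch merely unfolds the content of that reference (the graded analysis of $x\mapsto \beta x-x\beta$ on $\gr(\A)$ from \cite[\S 1.4]{BK}) before deferring to it for the actual computation. The only point worth noting is the degenerate case $E=F$, where $k_0(\beta,\A)=-\infty\neq -n$ even though the stratum is simple and $\beta$ is minimal; this edge case is absorbed into the conventions surrounding \cite[1.4.15]{BK} and is glossed over both in the paper and in your first equivalence.
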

 
 \begin{proof}
 C'est une conséquence directe de \cite[1.4.15]{BK}
 \end{proof}
  \subsection{Corestriction modérée }
  
   Soit $E/F$ une extension finie de $F$ contenue dans $A$. Notons $B=\mathrm{End}_E(V)$ le centralisateur de $E$ dans $A$.
 
 \begin{defi}Une corestriction modérée sur $A$ relativement à $E/F$ est un morphisme de  $(B,B)$-bimodule\footnote{ Un morphisme de $(B,B)-$bimodule $A\to B$ est un morphisme de groupe tel que $s(bab')=bs(a)b'$}  $s:A\to B$ tel que $s(\A)=\A \cap B$ pour tout ordre héréditaire $\A$ dans $A$ normalisé par $E^{\times}$.
 
 \end{defi}
 
 \begin{prop} \cite[1.3.4,1.3.8]{BK} \begin{enumerate} \item[($i$)] Soit $\psi_E$, $\psi_F$ des caractères continus additifs de $E,F$ ayant pour conducteur $\mf{p}_E , \mf{p} _ F$ respectivement. Soient $\psi_B $ et $\psi _A$ les caractères additifs de $B,A$ définis par $\psi_B = \psi _E \circ \mathrm{tr}_{ B/E} $ $\psi_A = \psi_F \circ \mathrm{tr}  _{A/F}$.
 Il existe une unique application $s:A \to B $ tel que $\psi_A ( ab) = \psi _B(s(a)b)$ , $a\in A , b \in B$. L'application $s$ est une corestriction modérée sur $A$ relativement à $E/F$.
 
 \item[($ii$)] Supposons que $E/F$ est une extension modérément ramifiée, alors il existe une corestriction modérée tel que $s\mid _B = \mathrm{Id} _B$
 \end{enumerate}
 \end{prop}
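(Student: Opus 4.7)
Both parts rest on the non-degeneracy of the trace pairings and on the self-duality of hereditary orders against the additive character. I would set up the duality machinery first and then derive everything from it.

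\textbf{Proof of (i).} The form $(a,a')\mapsto \psi_A(aa')$ on $A$ and $(b,b')\mapsto \psi_B(bb')$ on $B$ are non-degenerate (since $\tr_{A/F}$, $\tr_{B/E}$ and the characters $\psi_F,\psi_E$ are all non-degenerate). For $a\in A$ fixed, the $F$-linear functional $b\mapsto \psi_A(ab)$ on $B$ is continuous and trivial on some open lattice, so by non-degeneracy of $\psi_B$ on $B$ there is a unique $s(a)\in B$ with $\psi_B(s(a)b)=\psi_A(ab)$ for all $b\in B$. Uniqueness gives $F$-linearity, and the $(B,B)$-bimodule property follows by evaluation: if $b_1,b_2\in B$, then
\[
\psi_B(s(b_1 a b_2)\cdot b) = \psi_A(b_1 a b_2 b) = \psi_A(a\cdot b_2 b b_1) = \psi_B(s(a)\cdot b_2 b b_1) = \psi_B(b_1 s(a) b_2\cdot b),
\]
and non-degeneracy forces $s(b_1 a b_2) = b_1 s(a) b_2$. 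The delicate point is the identity $s(\A)=\A\cap \mf{B}$ when $\A$ is normalized by $E^{\times}$. For this I would use the duality identity $\A=\{a\in A \mid \psi_A(a\A)=1\}$ (this is the self-duality of the hereditary order with respect to a character of conductor $\mf{p}_F$, the analogue of \cite[1.3.1]{BK}), together with its counterpart $\mf{B}=\{b\in B \mid \psi_B(b\mf{B})=1\}$ for the order $\mf{B}=\A\cap B$ inside $B$ (which is hereditary because $E^{\times}$ normalizes $\A$). The inclusion $s(\A)\subset \A\cap \mf{B}$ follows because $\psi_B(s(\A)\mf{B}) = \psi_A(\A \mf{B})\subset \psi_A(\A\A) \subset 1$. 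The reverse inclusion is obtained by running the duality in the other direction: if $b\in \A\cap \mf{B}$, one constructs a preimage in $\A$ via the same adjoint construction applied to an extension of $b$ using a $B$-module complement of $B$ in $A$.

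\textbf{Proof of (ii).} Restricted to $B$, the map $s|_{\mf{B}}\colon B\to B$ is a $(B,B)$-bimodule morphism. Since $B$ is a central simple $E$-algebra, any such morphism is multiplication by a scalar $c\in E$: indeed, by Skolem--Noether (or directly, because the center of the $(B,B)$-bimodule $B$ is $E$), $s|_B=\mu_c$ for some $c\in E$. In the tamely ramified case, the different $\mathfrak{d}_{E/F}$ coincides with $\mf{p}_E^{e-1}$ with $e=e(E\mid F)$ coprime to $p$, and a direct computation of $s(1)$ using the compatibility $\tr_{A/F}|_B = \tr_{E/F}\circ\tr_{B/E}$ (applied on the $F$-subspace $B\subset A$ via an $E$-decomposition $A\simeq B\oplus B^{\perp}$) shows that $c\in \mathfrak{o}_E^{\times}$. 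Then $s':=c^{-1}s$ still satisfies $s'(\A)=\A\cap B$ for every $\A$ normalized by $E^{\times}$ (because $c^{-1}\in \mathfrak{o}_E^{\times}\subset \mathfrak{B}^{\times}$ stabilizes $\A\cap B$), and $s'|_B=\mathrm{Id}_B$ by construction.

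\textbf{Main obstacle.} The substantive step is the equality $s(\A)=\A\cap \mf{B}$, and inside this the reverse inclusion $\A\cap\mf{B}\subset s(\A)$; it is not formal from the definitions and relies on a clean duality statement for hereditary orders with respect to a character of conductor $\mf{p}_F$, plus the compatibility of this duality under restriction to the centralizer $B$. Part (ii) then reduces to a scalar computation, whose only nontrivial input is that the different of $E/F$ has valuation $e-1$ in the tame case, ensuring $c$ is a unit.
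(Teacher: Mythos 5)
Le texte ne donne ici aucune démonstration : il renvoie simplement à \cite[1.3.4, 1.3.8]{BK}. Votre proposition doit donc être jugée par rapport à l'argument original de Bushnell--Kutzko, et votre stratégie globale est bien la leur : adjonction via les accouplements de trace non dégénérés, propriété de bimodule par symétrie de la trace, dualité de réseaux pour $s(\A)=\A\cap B$, et réduction de ($ii$) au calcul d'un scalaire. L'existence et l'unicité de $s$, ainsi que le calcul $s(b_1ab_2)=b_1s(a)b_2$, sont corrects ; pour ($ii$), l'observation que $s\mid_B$ est la multiplication par un $c\in E$ central et que $c$ est une unité exactement quand la différente vaut $\mf{p}_E^{e-1}$ est juste (on peut même court-circuiter le renormalisation en choisissant $\psi_E=\psi_F\circ \mathrm{tr}_{E/F}$, dont le conducteur est $\mf{p}_E$ précisément parce que $E/F$ est modérée).

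Il y a cependant deux défauts concrets dans ($i$). D'abord, l'énoncé de dualité que vous invoquez est faux : un ordre héréditaire n'est pas auto-dual pour $\psi_A$. Avec $\psi_F$ de conducteur $\mf{p}_F$, le dual de $\mathfrak{P}^n$ est $\mathfrak{P}^{1-n}$ (c'est \cite[1.3.2--1.3.3]{BK}), donc le dual de $\A$ est son radical $\mathfrak{P}$ et non $\A$ ; en particulier $\psi_A$ n'est pas trivial sur $\A$ (déjà $\mathrm{tr}_{A/F}(\A)=\mf{o}_F$), de sorte que votre justification affichée \emph{« $\psi_B(s(\A)\mf{B})=\psi_A(\A\mf{B})\subset\psi_A(\A\A)\subset 1$ »} échoue à la dernière étape. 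La version corrigée de la même idée fonctionne : $s(\mathfrak{P}^n)\subset\mathfrak{Q}^n$ parce que $\psi_B\bigl(s(\mathfrak{P}^n)\mathfrak{Q}^{1-n}\bigr)=\psi_A\bigl(\mathfrak{P}^n\mathfrak{Q}^{1-n}\bigr)$ et $\mathfrak{P}^n\mathfrak{Q}^{1-n}\subset\mathfrak{P}$. Ensuite, l'inclusion réciproque $\A\cap B\subset s(\A)$ --- que vous identifiez à juste titre comme le point substantiel --- n'est pas démontrée : \emph{« construire une préimage via un supplémentaire de $B$ »} n'est pas un argument, et c'est là que réside le vrai travail de \cite[1.3.4]{BK} (on calcule $s$ sur $B$, où c'est la multiplication par un élément $c$ de valuation $d(E\mid F)-(e-1)$, et on combine cela avec la dualité de réseaux ci-dessus pour obtenir l'égalité $s(\mathfrak{P}^n)=\mathfrak{Q}^n$ ; noter qu'en dehors du cas modéré $c$ n'est pas une unité, si bien que la surjectivité ne peut pas se déduire de la seule restriction à $B$). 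En l'état, la preuve de ($i$) comporte donc une étape fausse et une étape manquante, même si le schéma général est le bon.
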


 \subsection{Caractères simples}
 
  A toute strate simple $[\mf{A},n,r,\beta]$ est associé un groupe $H^1(\beta,\A)$ et un ensemble de caractères (dit simples) de $H^1(\beta,\A)$ ayant un entrelacement  dans $G$ remarquable, c'est l'objet de ce paragraphe.
  
  Deux strates de la forme $[\A,n,r,\beta _1 ] $ et $[\A,n,r,\beta _ 2]$ sont dites équivalentes si $\beta_1 -\beta _2 \in \p ^{-r}$, on note alors $[\A,n,r,\beta _1 ] \sim [\A,n,r,\beta _ 2]$.
  
Le théorème suivant est fondamental:

\begin{theo}\label{approxi} \begin{enumerate}
\item[($i$)] Soit $[\A,n,r,\beta]$ une strate pure dans l'algèbre $A$. Il existe une strate simple $[\A,n,r,\gamma]$ dans $A$ tel que 

\begin{center}

$[\A,n,r,\gamma]\sim [\A,n,r,\beta] .$

\end{center}

De plus, si  $[\A,n,r,\gamma]$ est une strate  satisfaisant cette condition alors $e(F[\gamma]\mid F)$ divise $e(F[\beta]\mid F) $ et $f(F[\gamma]\mid F)$ divise $f(F[\beta] \mid F)$.

\item[($ii$)] Soit $[\A,n,r,\beta]$ une strate pure dans $A$ tel que $r=-k_0(\beta,\A)$. Soit $[\A,n,r,\gamma]$ une strate simple dans $A$ qui est équivalente à $[\A,n,r,\beta]$, soit $s_{\gamma}$ une corestriction modérée sur $A$ relativement à $F[\gamma]/F$, soit $B_{\gamma}=\mathrm{End}_{F[\gamma]}(V),$ et $\mf{B} _{\gamma} = \A \cap B_{\gamma}.$ Alors $[\mf{B}_{\gamma},r,r-1,s_{\gamma}(\beta - \gamma)]$ est équivalent à une strate simple dans $B_{\gamma}$.

\end{enumerate}
\end{theo} 
\begin{proof}
\cite[2.4.1]{BK}
\end{proof}

Ainsi le théorème précédent permet, étant donnée une strate pure $[\A,n,r,\beta]$, de lui associer un entier $s$ et une suite de strate simple $[\A,n,r,\beta _i ]$, $0\leq i \leq s$ appelée suite d'approximation et vérifiant:
\begin{enumerate}
\item[($i$)]$[\A , n , r_i , \beta _i ]$  est simple pour $0\leq i \leq s$;
\item[($ii$)]$[\A,n,r_0,\beta _0 ] \sim [\A , n , r , \beta ];$
\item[($iii$)]$r=r_0 <r_1<\ldots <r_s <n ;$
\item[($iv$)]$r_{i+1}=-k_0(\beta _ i , \A), et [\A, n , r_{i+1} , \beta _{i+1} ]$ est équivalent à $[\A,n,r_{i+1} , \beta _i] $ pour $0 \leq i \leq s-1$;
\item[($v$)]$k_0(\beta _ s , \A) =-n $ ou $- \infty ;$
\item[($vi$)] Soit $\A _ i $ le centralisateur de $\beta _ i$ dans $\A$ et $s_i$ une corestriction modérée sur $A$  relativement à $F[\beta _ i ] / F$. La strate dérivée $[\A _ {i+1} , r_{i+1} , r_{i+1} -1 , s_{i+1}(\beta _{ i} - \beta _{i+1} )]$ est équivalente à une strate simple pour $ 0 \leq i \leq s-1$.
\end{enumerate}
\begin{rema} Le procédé d'approximation termine après un nombre fini d'étape puisque le degré de la suite des corps $F[\beta _ i ]$ est strictement décroissante d'après \ref{approxi}, et qu'un élément $\beta$ appartenant à $F$ vérifie $k_0 (\beta,\A) = - \infty$. 
\end{rema}

On peut maintenant définir les groupes sur lesquels les caractères simples et les types simples seront définis, fixons une strate simple $[\A,n,0,\beta],$ et posons $r=-k_0(\beta,\A)$.

\begin{defi}\label{defsimpl} \begin{enumerate}
\item[($i$)] Supposons que $\beta$ est minimal sur $F$.
On pose \begin{enumerate} \item $\mf{H}(\beta,\A)=\mf{H}(\beta,\A)=\mf{B}_{\beta} + \p ^{[\frac{n}{2}]+1}$
\item $\mf{J}(\beta,\A)=\mf{J}(\beta,\A)=\mf{B}_{\beta} + \p ^{[\frac{n+1}{2}]}$ \end{enumerate}

\item[($ii$)] Supposons que $r<n$, soit $[\A,n,r,\gamma]$ une strate simple équivalente à $[\A,n,r,\beta].$ 

On pose \begin{enumerate} \item $\mf{H}(\beta,\A)=\mf{B}_{\beta} + \mf{H}(\gamma) \cap \p ^{[\frac{r}{2}]+1}$
 \item $\mf{J}(\beta,\A)=\mf{B}_{\beta} + \mf{J}(\gamma) \cap \p ^{[\frac{r+1}{2}]}$
\end{enumerate}

\item[($iii$)] Posons alors \begin{enumerate}
\item $\mf{H}^k(\beta,\A)=\mf{H}(\beta,\A)\cap \p ^k$

\item $\mf{J}^k(\beta,\A)=\mf{J}(\beta,\A)\cap \p ^k$
\end{enumerate}
\item[($iv$)] Enfin, notons $U^m(\A)=\left(1+\mathfrak{P}^m \right)$ et posons \begin{enumerate}
\item $H^m(\beta, \A)=\mf{H}(\beta,\A) \cap U^m(\A) $
\item $J^m(\beta, \A)=\mf{J}(\beta,\A) \cap U^m(\A)$
\end{enumerate}
\end{enumerate}
 \end{defi} 
 
 \begin{rema}D'après \cite[3.1.7]{BK}, $\mf{J}^k(\beta,\A)$ et  $\mf{H}^k(\beta,\A)$ sont bien définis, ils ne dépendent pas du choix de $\gamma$. Il en est donc de même de $H^m(\beta, \A)$ et $J^m(\beta, \A)$.

 \end{rema}
 
  On note $\psi _A$ la fonction sur $A$ définie par $\psi_{A}(x)=\psi \circ \mathrm{tr}_{A/F} (x)$. Soit $b\in A$ on définit la fonction  $\psi _ b$ sur $A$ par :
 \begin{center}
 
 $\psi _{b}(x)=\psi _A (b (x-1)$
 
 \end{center}
 
 Nous pouvons désormais donner la définition des caractères simples de Bushnell-Kutzko.
 
 \begin{defi}\begin{enumerate}\item Supposons que $\beta $ est minimal sur $F$. Soit $0 \leq m \leq n-1$, soit $\mathcal{C}(\A,m,\beta)$ l'ensemble des caractères $\theta$ de $H^{m+1}(\beta)$ tel que:
 
 \begin{enumerate}
 \item $\theta \mid H^{m+1}(\beta) \cap U^{[\frac{n}{2}]+1}(\A)=\psi_{\beta}$
 \item $\theta \mid H^{m+1}(\beta) \cap B_{\beta}^{\times}$ factorise par $\mathrm{det}_{B_{\beta}}:B_{\beta}^{\times} \to F[\beta]^{\times}.$
 \end{enumerate}
 
 \item Supposons que $r<n. $ Soit $0\leq m \leq r-1$, soit $\mathcal{C}(\A,m,\beta)$ l'ensemble des caractères $\theta$ de $H^{m+1}(\beta)$ tel que \begin{enumerate}
 
 \item $\theta \mid H^{m+1}(\beta) \cap B_{\beta}^{\times}$ factorise par $\mathrm{det}_{B_{\beta}}$
 \item $\theta$ est normalisé par $\mf{K}(\mf{B}_{\beta})$
 \item si $m'=\mathrm{max}\{m,[\frac{r}{2}\},$ la restriction $\theta \mid H^{m'+1}(\beta)$ est de la forme $\theta _0 \psi _c $ avec $\theta _0 \in \mathcal{C}(\A,m',\gamma)$ et $c=\beta -\gamma.$

 \end{enumerate}
 \end{enumerate}
  \end{defi}
 \begin{rema} Dans le deuxième cas, $\mathcal{C}(\A,m,\beta)$ est défini par induction: étant donné une suite d'approximation $[\A , n , r_i , \beta _i ]$ ,$0\leq i \leq s$ de $[\A,n,0,\beta]$, le dernier terme de la suite est tel que $\beta _ s $ est minimal sur $F$, on définit ainsi un ensemble de caractère simple associé, puis on définit ceux associés à $[\A , n , r_{s-1} , \beta _{s-1} ]$ et par itération on obtient les caractères simples $\mathcal{C}(\A,m,\beta)$.
\end{rema}
 \begin{prop} Soit $[\A,n,0,\beta]$ une strate simple dans l'algèbre $A$, posons $r=-k_0(\beta,\A)$. Soit $0\leq m \leq [\frac{r}{2}]$, et $\theta \in \mathcal{C}(\A,m,\beta).$ Alors 
 \begin{center}
 $I_{G}(\theta)=J^{[\frac{r+1}{2}]}(\beta,\A) B_{\beta}^{\times} J^{[\frac{r+1}{2}]}(\beta,\A)$
 \end{center}
 \end{prop}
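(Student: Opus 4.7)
The plan is to prove the two inclusions separately. The inclusion $\supseteq$ follows directly from the axioms defining $\mathcal{C}(\A,m,\beta)$: the factorization through $\det_{B_\beta}$ and the normalization by $\mf{K}(\mf{B}_\beta)$ give $B_\beta^\times \subseteq I_G(\theta)$, and for $j \in J^{[\frac{r+1}{2}]}(\beta,\A)$ the standard commutator estimates between the $\mf{J}$- and $\mf{H}$-filtrations show that conjugation by $j$ preserves $\theta$ on $H^{m+1}(\beta) \cap j H^{m+1}(\beta) j^{-1}$. Since $B_\beta^\times$ normalizes $J^{[\frac{r+1}{2}]}(\beta,\A)$, the product $J^{[\frac{r+1}{2}]}(\beta,\A) B_\beta^\times J^{[\frac{r+1}{2}]}(\beta,\A)$ is a subgroup of $G$, and hence contained in $I_G(\theta)$.

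The harder inclusion $\subseteq$ proceeds by induction on the length $s$ of an approximation sequence $[\A,n,r_i,\beta_i]_{0 \leq i \leq s}$ for $[\A,n,0,\beta]$ furnished by Theorem \ref{approxi}. For the base case $s=0$, the element $\beta$ is minimal over $F$; by Proposition \ref{minider} we have $r=n$, and $\theta$ restricts to $\psi_\beta$ on $U^{[n/2]+1}(\A)$. Any $g \in I_G(\theta)$ must then intertwine $\psi_\beta$ on that subgroup, and a standard Fourier-theoretic computation (cf. \cite[1.5.8]{BK}) forces $g$ to lie in $(1+\p^{[\frac{n+1}{2}]}) B_\beta^\times (1+\p^{[\frac{n+1}{2}]})$, which equals $J^{[\frac{n+1}{2}]}(\beta,\A) B_\beta^\times J^{[\frac{n+1}{2}]}(\beta,\A)$ by the explicit description of $\mf{J}(\beta,\A) = \mf{B}_\beta + \p^{[\frac{n+1}{2}]}$ in the minimal case.

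For the inductive step, choose a simple approximation $[\A,n,r,\gamma] \sim [\A,n,r,\beta]$, set $c = \beta - \gamma$ and $r' = -k_0(\gamma,\A) > r$. The recursive definition of $\mathcal{C}$ writes $\theta|_{H^{[r/2]+1}(\beta)} = \theta_0 \psi_c$ with $\theta_0 \in \mathcal{C}(\A,[r/2],\gamma)$. Any $g \in I_G(\theta)$ in particular intertwines $\theta_0$, so the inductive hypothesis applied to $\gamma$ (which admits an approximation sequence of length $s-1$) locates $g$ in $J^{[\frac{r'+1}{2}]}(\gamma,\A) B_\gamma^\times J^{[\frac{r'+1}{2}]}(\gamma,\A)$. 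To sharpen this to $J^{[\frac{r+1}{2}]}(\beta,\A) B_\beta^\times J^{[\frac{r+1}{2}]}(\beta,\A)$ one invokes the tame corestriction $s_\gamma : A \to B_\gamma$: the additive intertwining of $\psi_c$ on the relevant layer reduces, via $s_\gamma$, to the intertwining of $\psi_{s_\gamma(c)}$ inside $B_\gamma^\times$, and by Theorem \ref{approxi}(ii) the derived stratum $[\mf{B}_\gamma, r, r-1, s_\gamma(c)]$ is equivalent to a simple stratum in $B_\gamma$ of strictly smaller ambient degree, to which the same analysis applies recursively.

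The main obstacle is precisely this last bookkeeping: coordinating the $H$- and $J$-filtrations for $\beta$ against those for $\gamma$, and converting the additive intertwining of $\psi_c$ into the multiplicative intertwining of $\theta$ through the tame corestriction $s_\gamma$. These compatibility statements, together with the fact that the derived stratum remains simple in $B_\gamma$, form the technical core of the argument and are the reason the induction closes up.
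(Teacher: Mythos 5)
The paper does not actually prove this proposition: it is quoted from Bushnell--Kutzko and the ``proof'' is the citation \cite[3.3.2 Remark]{BK}. Your sketch reproduces the overall architecture of Bushnell--Kutzko's own argument (easy inclusion, base case for minimal $\beta$ via the intertwining of a simple stratum, induction along an approximation sequence using the tame corestriction), which is the correct architecture; but at the level of detail you give, both inclusions have genuine gaps precisely where the work is concentrated.

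For $\supseteq$: the axioms only give that $\mf{K}(\mf{B}_{\beta})$ normalizes $\theta$ and that $\theta$ factors through $\det_{B_{\beta}}$ on $H^{m+1}\cap B_{\beta}^{\times}$. A general $y\in B_{\beta}^{\times}$ does not lie in $\mf{K}(\mf{B}_{\beta})$, and the intersection $H^{m+1}\cap {}^{y}H^{m+1}$ contains elements outside $B_{\beta}^{\times}$, on which $\theta$ is built from the characters $\psi_{c}$ with $c=\beta-\gamma$; conjugation by $y$ fixes $\beta$ but not the approximant $\gamma$, so ${}^{y}\psi_{c}=\psi_{y^{-1}cy}$ is not formally equal to $\psi_{c}$ there, and this invariance is part of the theorem, not of the axioms. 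For $\subseteq$, the decisive step of your induction is the assertion that $g\in I_{G}(\theta)$ ``in particular intertwines $\theta_{0}$''. What you actually know is that $g$ intertwines the product $\theta_{0}\psi_{c}$ on $H^{[r/2]+1}$; separating the two factors --- proving $I_{G}(\theta_{0}\psi_{c})\subseteq I_{G}(\theta_{0})$ and then reducing the residual intertwining of $\psi_{c}$ to that of the derived stratum $[\mf{B}_{\gamma},r,r-1,s_{\gamma}(c)]$ --- is exactly the technical core of \cite[2.6, 3.3]{BK} and is not a formal consequence. Note also that $F[\gamma]\not\subseteq F[\beta]$ in general (this is what the tameness hypothesis buys elsewhere in this paper), so $B_{\beta}^{\times}$ and $B_{\gamma}^{\times}$ are not nested and the passage from the $\gamma$-answer back to the $\beta$-answer is itself delicate. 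As written, your proposal is an accurate table of contents for the Bushnell--Kutzko proof rather than a proof.
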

 \begin{proof}
 \cite[3.3.2 Remark]{BK}
 \end{proof}
 \subsection{Types simples et représentations}
 On donne dans ce paragraphe la définition d'un type simple au sens de Bushnell-Kutzko. Enfin on énonce le théorème principal de Bushnell-Kutzko. \\
 
 Soit $[\mf{A},n,0,\beta]$ une strate simple et soit $\theta$ un caractère simple de $H^1(\beta, \A)$. Il existe une unique représentation irréductible $\eta$ de $J^1(\beta, \A)$ contenant $\theta$ \cite[5.1.1]{BK}.\\
 
 \cite[5.2.1]{BK}Une $\beta -extension$ de $\theta$ est une représentation $\kappa$ de $J^0(\beta , \A)$ tel que :
 
 \begin{enumerate}
 \item[($i$)] $\kappa \mid J^1(\beta)= \eta $
 \item[($ii$)] $\kappa$ est entrelacé par $B^{\times} $
 
 \end{enumerate}
 
  \begin{prop} \label{beta} Soit $\kappa$ une représentation de $J^0(\beta , \A)$. Les assertions suivantes sont équivalentes:\begin{enumerate}
  \item[($i$)] La représentation $\kappa$ est une $\beta -extension$ de $\theta$.
  \item[($ii$)] La représentation $\kappa$ satisfait les 3 conditions suivantes \begin{enumerate}
  \item $\kappa $ contient $\theta$
  \item $\kappa$ est entrelacé par $B^{\times} $
  \item $dim(\kappa)=dim (\eta ) = [J^1(\beta , \A) : H^1(\beta ,\A )]^{\frac{1}{2}}$
  \end{enumerate}
  \end{enumerate}
  \end{prop}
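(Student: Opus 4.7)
Le plan est de démontrer l'équivalence \(({i}) \Leftrightarrow ({ii})\) en s'appuyant de façon essentielle sur l'unicité de la représentation $\eta$ de $J^1(\beta,\A)$ contenant $\theta$ (\cite[5.1.1]{BK}) ainsi que sur la formule de dimension de type Heisenberg $\dim(\eta)=[J^1(\beta,\A):H^1(\beta,\A)]^{1/2}$, conséquence du fait que $J^1/H^1$ est un $\mathbb{F}_p$-espace vectoriel fini muni d'une forme symplectique non dégénérée induite par le commutateur, et que $\theta$ devient le caractère central de $\eta$.

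Pour la direction \(({i})\Rightarrow ({ii})\), supposons que $\kappa$ est une $\beta$-extension de $\theta$. Alors $\kappa\mid J^1(\beta,\A)=\eta$ par définition, et puisque $\eta$ contient $\theta$, la restriction $\kappa\mid H^1(\beta,\A)$ contient aussi $\theta$; ceci établit (a). La condition (b) est donnée par définition. Pour (c), l'égalité $\dim(\kappa)=\dim(\eta)$ résulte de ce que $\kappa$ et $\kappa\mid J^1(\beta,\A)=\eta$ ont même espace sous-jacent, et la seconde égalité est la formule de dimension de Heisenberg rappelée ci-dessus.

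Pour la direction \(({ii})\Rightarrow ({i})\), la seule chose à établir est la condition $\kappa\mid J^1(\beta,\A)=\eta$; la condition d'entrelacement par $B^{\times}$ est directement fournie par (b). De (a), on tire que $\kappa\mid H^1(\beta,\A)$ contient $\theta$ comme constituant. Je décompose $\kappa\mid J^1(\beta,\A)$ en composantes irréductibles (ce qui est loisible, la représentation étant de dimension finie et se factorisant à travers un quotient fini de $J^1(\beta,\A)$, qui est pro-$p$). Comme $\kappa\mid H^1(\beta,\A)$ contient $\theta$, au moins un constituant irréductible $\sigma$ de $\kappa\mid J^1(\beta,\A)$ doit, en restriction à $H^1(\beta,\A)$, contenir $\theta$. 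Par l'unicité de $\eta$ (\cite[5.1.1]{BK}), on a forcément $\sigma=\eta$. Comparant alors les dimensions grâce à (c), on obtient $\dim(\kappa\mid J^1(\beta,\A))=\dim(\eta)$, donc $\kappa\mid J^1(\beta,\A)=\eta$, ce qui termine la preuve.

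La principale difficulté est la direction \(({ii})\Rightarrow ({i})\): il faut d'une part justifier soigneusement que la restriction de $\kappa$ à $J^1(\beta,\A)$ est semi-simple, afin de pouvoir parler de constituants irréductibles comme facteurs directs, et d'autre part utiliser l'unicité de $\eta$ couplée à l'égalité de dimensions pour conclure sans ambiguïté. Les points véritablement techniques — existence et unicité de $\eta$, formule de Heisenberg — sont empruntés à \cite[5.1.1]{BK} et ne sont pas redémontrés ici.
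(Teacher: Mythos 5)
Votre démonstration est correcte et suit essentiellement la même démarche que celle du texte : le sens direct est immédiat, et pour la réciproque on utilise que $\kappa\mid_{H^1(\beta,\A)}$ contient $\theta$, donc $\kappa\mid_{J^1(\beta,\A)}$ contient $\eta$ par unicité (\cite[5.1.1]{BK}), puis l'égalité des dimensions donne $\kappa\mid_{J^1(\beta,\A)}=\eta$. Les précisions que vous ajoutez (semi-simplicité de la restriction, formule de dimension de Heisenberg) ne changent pas la structure de l'argument.
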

  
  \begin{proof} Si $\kappa$ est une $\beta-extension$, $\kappa$ satisfait $(a),(b),(c)$. Réciproquement si $\kappa$ satisfait $(a),(b),(c)$ alors ${(\kappa \mid _{J^1 (\beta , \A)}}) \mid _{H^1(\beta , \A )}$ contient $\theta $ donc  $\kappa \mid _{J^1 (\beta , \A )}$ contient $\eta$ et vu l'égalité des dimensions $\kappa \mid _{J^1 (\beta , \A) }= \eta$, ainsi $\kappa$ est une $\beta -extension$.
  \end{proof}

\begin{defi} Un type simple est ou bien 
\begin{enumerate}
\item une représentation irréductible $\lambda = \kappa \otimes \sigma $ de $J(\beta, \A )$ où:
\begin{enumerate}
\item $\A$ est un $\mathfrak{o} _F -$ordre héréditaire principal dans $\A$ et $[\A,n,0,\beta]$ est une strate simple;
\item $\kappa$ est une $\beta -extension$ d'un caractère $\theta \in \mathcal{C}(\A,0,\beta)$
\item Si l'on pose $E=F[\beta],\mf{B} =\A \cap \mathrm{End}_{E}(V)$, de telle sorte que :
\begin{center}
$J(\beta, \A ) / J^1(\beta ,\A) \simeq U(\mf{B})/U^1(\mf{B})\simeq \mathrm{GL}_f(k_E)^e$

\end{center}

pour certains entiers $e$ et $f$,

alors $\sigma$ est l'inflation d'une représentation $\sigma _0 \otimes \cdots \otimes \sigma _0 $ où $\sigma _0 $ est une représentation irréductible cuspidale de $\mathrm{GL}_f(k_E)$
\end{enumerate} ou bien
\item une représentation irréductible $\sigma$ de $U(\A)$ où:
\begin{enumerate}
\item $\A$ est un $\mathfrak{o} _ F -$ordre principal dans $A$
\item Si l'on écrit $U(\A)/U^1(\A) \simeq \mathrm{GL}_f(k_F)^e, $ pour certains entiers $e,f$, alors $\sigma$ est l'inflation d'une représentation $\sigma_0 \otimes \cdots \otimes \sigma _0$, où $\sigma _0$ est une représentation irréductible cuspidale de $\mathrm{GL}_f(k_F)$
\end{enumerate}
 \end{enumerate}
\end{defi}
 
 On a alors le théorème suivant:
 
 \begin{theo}
 Soit $\pi$ une représentation irréductible supercuspidale de $G=\mathrm{Aut}_F(V)\simeq \mathrm{GL}_N(F)$. Il existe un type 
 simple $(J,\lambda)$ dans $G$ tel que $\pi \mid J $ contient $\lambda$. De plus,
 \begin{enumerate}
 \item[($i$)] le type simple $(J,\lambda)$ est uniquement déterminé à G-conjugaison près.
 \item[($ii$)] si $(J,\lambda)$ est donné par une strate simple $[\A,n,0,\beta]$ dans $A=\mathrm{End}_F(V)$ avec $E=F[\beta]$, il existe une unique représentation $\Lambda$ de $E^{\times} J$ telle que $\Lambda \mid J = \lambda $ et $\pi=\mathrm{c-ind}(\Lambda)$
 \item[($iii$)] Si $(J,\lambda)$ est de type (2), c'est à dire, $J=U(\A)$ pour un ordre héréditaire principal $\A$ et $\lambda$ est trivial sur $U^1(\A)$, alors il existe une unique représentation $\Lambda$ de $F^{\times} U(\A)$ telle que $\Lambda \mid U(\A) = \lambda $ et $\pi = \mathrm{c-ind} (\Lambda)$.
 \end{enumerate}
 \end{theo}

 \section{La construction de Yu}\label{yu}

 Étant donné le groupe des $F-$points d'un groupe algébrique réductif connexe arbitraire, Yu \cite{Yu} construit des représentations supercuspidales irréductibles "modérées" de $G$. Kim \cite{Kim} a montré que lorsque la caractéristique résiduelle de $F$ est suffisamment grande la construction de Yu est exhaustive. 
 
  Yu se donne plusieurs objets:
 
 \begin{enumerate}
 \item[($\overrightarrow{G}$)]  Une suite strictement croissante de groupes algébriques $G^0\subset \cdots\subset G^i \subset \cdots \subset G^d $ tel que 
 \begin{enumerate}
 \item Il existe une extension finie modérément ramifiée galoisienne $E/F$ tel que $G^i \otimes E$ est un sous groupe de Levi déployé de $G \otimes E$, on appellera une telle suite " suite de Levi tordue modérée ".
 \item $Z(G^0)/Z(G)$ est anisotrope.
 \end{enumerate}
 \item[($y$)] un point $y \in \mathcal{B}(G^0,F) \cap A(G,T,E)$ où $T$ est un tore maximal de $G^0$, tel que $T \otimes E $ est déployé et $A(G,T,E)$ désigne l'appartement associé à $T$ sur $E$. 
 \item[($\overrightarrow{r}$)] une suite de nombres réels $0<\mathbf{r}_0<\mathbf{r}_1<...<\mathbf{r}_{d-1}\leq \mathbf{r}_d$ si $d>0$ , $0\leq \mathbf{r}_0$ si $d=0$
 \item[($\rho$)] une représentation irréductible $\rho$ de $K^0=G^0_{[y]}$ telle que $\rho \mid G^0_{y,0+}(F)=1$ et telle que $\pi_0:=\mathrm{c-ind}_{K^0}^{G^0(F)}(\rho)$ est irréductible supercuspidale
 \item[($\overrightarrow{\boldsymbol{\phi}}$)] une suite $\boldsymbol{\phi} _0,\ldots, \boldsymbol{\phi} _d $ de caractères de $G^0(F),\ldots,G^d(F)$. On suppose que $\boldsymbol{\phi} _i $ est trivial sur $G^i(F)_{y,\mathbf{r}_i+} $ mais pas sur $G^i(F)_{y,\mathbf{r}_i} $ pour $0\leq i \leq d-1$. Si $\mathbf{r}_{d-1} < \mathbf{r}_d$, on suppose que $\boldsymbol{\phi} _d$ est trivial sur $G^d(F)_{y,\mathbf{r}_d +}$ mais pas sur $G^d(F)_{y,\mathbf{r}_d}$.
 
 \end{enumerate}
 
 On appellera un tel quintuplet une donnée de Yu, une donnée de Yu sera dite générique si $\boldsymbol{\phi} _i $ est $G_{i+1}-$générique de profondeur $\mathbf{r}_i$, cette dernière notion sera définie dans la suite de ce texte au paragraphe \ref{egcg}.\\
 
 Fixons dans ce paragraphe une donnée de Yu générique.

Les trois premières données permettent de définir directement trois groupes, sur le premier sera ensuite défini un caractère, sur les suivants sera définie une représentation.

 \begin{defi} Soit $\mathbf{s}_i=\frac{\mathbf{r}_i}{2}$.
 
  Posons \cite[§3, 15.3]{Yu} \begin{enumerate} 
 
 \item[($i$)] $K^d _+ = G^0(F)_{y,0+} G^1(F)_{y,\mathbf{s}_0+} \cdots G^d(F)_{y,\mathbf{s}_{d-1}+} $
 
 \item[($ii$)] $^{\circ} K^d=G^0(F)_{y,0} G^1(F)_{y,\mathbf{s}_0} \cdots G^d(F)_{y,\mathbf{s}_{d-1}}$

 \item[($iii$)] $ K^d=G^0(F)_{[y]} G^1(F)_{y,\mathbf{s}_0} \cdots G^d(F)_{y,\mathbf{s}_{d-1}}$

 \end{enumerate}
 \end{defi}
 \begin{rema}
 On a des inclusions : $K_+^d ~\subset ~ ^{\circ}K^d ~\subset ~K^d$. Les groupes $K_+^d$ et $ ^{\circ} K^d$ sont compacts et $  K^d$ et compact modulo le centre. Le groupe  $ K^d$ a un unique sous groupe compact maximal, il s'agit de $^{\circ} K^d$.
 \end{rema}
 
 Grâce à $\overrightarrow{\boldsymbol{\phi}}$, Yu définit un caractère $\prod\limits_{i=1}^d \hat{\boldsymbol{\phi}_i}$ sur $K^d _+$ \cite{Yu}. Il définit ensuite une représentation $\rho _d=\rho _d( \overrightarrow{G},y,\overrightarrow{\mathbf{r}},\rho ,\overrightarrow{\boldsymbol{\phi}}) $ de $K^d$ \cite[§4]{Yu}.\\
 
 Expliquons maintenant comment sont construits ces objets.\\
 
 Posons $T^i=(Z(G^i))^{\circ}$, et considérons l'action adjointe de $T^i$ sur $\mathfrak{g}$, alors $\mathfrak{g}^i=\mathrm{Lie}(G^i)$ est le sous espace maximal sur lequel $T^i$ agit trivialement. Soit $\mathfrak{n}^i$ la somme des autres sous espaces propres.
 Soit $\mathbf{s} \geq 0 \in \tilde{\R}$, alors $\mathfrak{g}(F) _s = \mathfrak{g} ^i (F) _s \oplus \mathfrak{n} ^i (F) _s $ où $\mathfrak{n} ^i (F) _s \subset \mathfrak{n} ^i (F)$.
  
 \begin{rema}\label{nigl}Si $G=\mathrm{GL}_{N}$, alors $\mathfrak{g}=\mathrm{M}_N(F)$ et $\mathfrak{g}$ est muni de la forme bilinéaire symétrique $(x,y)\mapsto \mathrm{tr}(xy)$ et on a $\mathfrak{g}(F)_{\mathbf{s}}=\mathfrak{g}^i(F)_{\mathbf{s}} \oplus \left((\mathfrak{g}^i(F))^{\perp} \cap \mathfrak{g}(F) _{\mathbf{s}} \right)$ \cite[Lemma 3]{HO}. On a dans ce cas $\mathfrak{n}^i(F) _{s} = (\mathfrak{g}^i(F))^{\perp} \cap \mathfrak{g}(F) _{\mathbf{s}}$.
 
 \end{rema}

 On a des isomorphismes:
 
 \begin{center}
 $
 G^i(F)_{\mathbf{s}_{i+} :\mathbf{r}_{i+}} \simeq \mathfrak{g}^i(F)_{\mathbf{s}_{i+} :\mathbf{r}_{i+}} \subset \mathfrak{g}^i(F)_{\mathbf{s}_{i+} :\mathbf{r}_{i+}} \oplus \mathfrak{n}^i(F)_{\mathbf{s}_{i+} :\mathbf{r}_{i+}}\simeq G(F)_{\mathbf{s}_{i+} :\mathbf{r}_{i+}}
 $
 \end{center}
 
 Le caractère $\boldsymbol{\phi } _i$ de $G^i(F)$ est de profondeur $\mathbf{r} _i $ il induit donc, via le premier isomorphisme ci dessus,  un caractère sur $\mathfrak{g}^i(F)_{\mathbf{s}_{i+} :\mathbf{r}_{i+}}$, on prolonge ce dernier à  $\mathfrak{g}^i(F)_{\mathbf{s}_{i+} :\mathbf{r}_{i+}} \oplus \mathfrak{n}^i(F)_{\mathbf{s}_{i+} :\mathbf{r}_{i+}}$ en décrétant qu'il vaut $1$ sur $\mathfrak{n}^i(F)_{\mathbf{s}_{i+} :\mathbf{r}_{i+}}$, on obtient alors via le dernier isomorphisme un caractère de $G(F)_{\mathbf{s}_{i+}}$ que Yu note $\hat{\boldsymbol{\phi}_i}$. Par construction, $\hat{\boldsymbol{\phi}_i} \mid _{G^i(F)_{\mathbf{s}_{i+}}}= \boldsymbol{\phi} _i \mid _{G^i(F)_{\mathbf{s}_{i+}}}$. Il existe alors un unique caractère de $G^0(F)_{[y]}G^i(F)_0 G(F)_{\mathbf{s} _{i+}}$ qui étend $\boldsymbol{\phi} _i$ et $\hat{\boldsymbol{\phi} _i}$. Yu note encore ce prolongement $ \hat{\boldsymbol{\phi} _i}$. Remarquons que 
 $K_+^d \subset G^0(F)_{[y]}G^i(F)_0 G(F)_{\mathbf{s} _{i+}}$, on a donc défini en particulier un caractère $ \hat{\boldsymbol{\phi} _i}$ sur $K_+^d$.\\

 Enfin, on construit pour $0\leq i \leq d-1$ une représentation $\kappa _i $ sur $K^d$ en utilisant la représentation de Heisenberg, la représentation $\kappa _i$ ne dépend que de $\boldsymbol{\phi} _i $, la\footnote{Il se peut que $[J^{i+1}:J_+^{i+1}]=1$.} dimension de $\kappa _i$ vaut $[J^{i+1}:J_+^{i+1}]^{\frac{1}{2}}$ où $J^{i+1}$ et $J^{i+1}_+$ sont définis au paragraphe $4$ de \cite{Yu}.  On réfère à \cite[3.25]{hamu} pour la construction de $\kappa _i$. On pose aussi $\kappa _d = \boldsymbol{\phi} _d \mid _{K^d}$.\\
 
 La représentation $\rho$ de $G^0(F)_{[y]}$ s'étend à $K^d$ via l'isomorphisme $K^d /K^d_+ \simeq G^0(F)_{[y]} / G^0(F)_{y,0}$, puisque $\rho \mid _{G^0(F)_{y,0}} =1$ par hypothèse.\\

 La représentation $\rho ^d $ construite par Yu est alors $\rho \otimes \kappa _0 \otimes \kappa _1 \otimes \cdots \otimes \kappa _d$.\\
 
 On pose aussi $\lambda= \kappa _0 \otimes \kappa _1 \otimes \cdots \otimes \kappa _d$, de tel sorte que $\lambda = \rho \otimes \lambda $. La représentation $\lambda$ ne dépend pas de $\rho$ ainsi, étant donné une donnée de Yu générique, on note:

$\bullet$ $\lambda=\lambda( \overrightarrow{G},y,\overrightarrow{\mathbf{r}}, \overrightarrow{\boldsymbol{\phi}})$.

$\bullet $ $\rho _d( \overrightarrow{G},y,\overrightarrow{\mathbf{r}},\rho ,\overrightarrow{\boldsymbol{\phi}})$.
 
 \begin{prop} \cite[4.4]{Yu} La représentation $\rho _d \mid _{K^d _+} contient \prod\limits_{i=0}^d\hat{ \boldsymbol{\phi} _i }\mid _{K_+ ^d}$.
 \end{prop}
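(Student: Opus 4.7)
Le plan est d'exploiter la décomposition $\rho_d = \rho \otimes \kappa_0 \otimes \cdots \otimes \kappa_d$ et de montrer facteur par facteur que la restriction à $K^d_+$ est, soit triviale (pour $\rho$), soit un multiple scalaire du caractère $\hat{\boldsymbol{\phi}_i}\mid_{K^d_+}$ correspondant (pour chaque $\kappa_i$). Le produit tensoriel donnera alors immédiatement que $\rho_d\mid_{K^d_+}$ contient $\prod_{i=0}^d \hat{\boldsymbol{\phi}_i}\mid_{K^d_+}$.

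Les deux facteurs extrêmes sont immédiats. Pour $\rho$, l'extension à $K^d$ est obtenue par inflation via l'isomorphisme $K^d/K^d_+ \simeq G^0(F)_{[y]}/G^0(F)_{y,0+}$; l'hypothèse $\rho\mid_{G^0(F)_{y,0+}}=1$ entraîne alors $\rho\mid_{K^d_+}=1$. Pour $\kappa_d = \boldsymbol{\phi}_d\mid_{K^d}$, on a directement $\kappa_d\mid_{K^d_+} = \boldsymbol{\phi}_d\mid_{K^d_+} = \hat{\boldsymbol{\phi}_d}\mid_{K^d_+}$, puisque $\hat{\boldsymbol{\phi}_d}$ est par construction un prolongement de $\boldsymbol{\phi}_d$.

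Le coeur de la preuve est l'analyse de $\kappa_i\mid_{K^d_+}$ pour $0 \leq i \leq d-1$. Rappelons que $\kappa_i$ provient d'une représentation de Heisenberg-Weil irréductible sur $J^{i+1}$, dont le caractère central (porté par $J^{i+1}_+$) est $\hat{\boldsymbol{\phi}_i}\mid_{J^{i+1}_+}$, puis prolongée de manière $K^i$-équivariante à $K^i \ltimes J^{i+1}$ et enfin à $K^d$ en étant triviale sur les composantes supérieures $G^j(F)_{y,\mathbf{s}_{j-1}+}$ pour $j > i+1$. J'analyse l'action sur chaque composante de la décomposition $K^d_+ = G^0(F)_{y,0+} G^1(F)_{y,\mathbf{s}_0+} \cdots G^d(F)_{y,\mathbf{s}_{d-1}+}$ : sur la partie contenue dans $J^{i+1}_+$, $\kappa_i$ agit par $\hat{\boldsymbol{\phi}_i}$ par définition du caractère central; sur $K^i_+$, qui agit équivariamment sur le groupe de Heisenberg en préservant le centre, le lemme de Schur appliqué à l'irréductibilité force l'action à être scalaire, et on identifie ce scalaire à $\hat{\boldsymbol{\phi}_i}\mid_{K^i_+}$ en utilisant la compatibilité $\hat{\boldsymbol{\phi}_i}\mid_{G^i(F)_{\mathbf{s}_i +}} = \boldsymbol{\phi}_i\mid_{G^i(F)_{\mathbf{s}_i +}}$; enfin, sur $G^j(F)_{y,\mathbf{s}_{j-1}+}$ pour $j > i+1$, $\kappa_i$ est triviale par construction et $\hat{\boldsymbol{\phi}_i}$ l'est aussi car $\mathbf{s}_{j-1}+ > \mathbf{r}_i$.

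L'obstacle principal est la vérification, via Schur et une analyse équivariante fine, que $\kappa_i$ agit bien par le scalaire $\hat{\boldsymbol{\phi}_i}$ sur $K^i_+$. Cela repose sur la généricité de $\boldsymbol{\phi}_i$ (qui assure la non-dégénérescence du crochet symplectique induit sur $J^{i+1}/J^{i+1}_+$, donc l'irréductibilité de la Heisenberg sous-jacente) et sur l'unicité du prolongement de $\boldsymbol{\phi}_i$ en $\hat{\boldsymbol{\phi}_i}$ au sous-groupe $G^0(F)_{[y]}G^i(F)_0 G(F)_{\mathbf{s}_i +}$ rappelée dans la construction de ce dernier; ces ingrédients font l'objet du paragraphe $4$ de \cite{Yu}.
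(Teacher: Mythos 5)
La proposition n'est pas démontrée dans le texte : l'auteur la cite telle quelle comme \cite[4.4]{Yu}. Votre schéma de preuve (décomposition $\rho_d=\rho\otimes\kappa_0\otimes\cdots\otimes\kappa_d$ puis analyse facteur par facteur) est précisément la stratégie de la démonstration originale de Yu, et les deux cas extrêmes ($\rho$ inflatée via $K^d/K^d_+$, donc triviale sur $K^d_+$, et $\kappa_d=\boldsymbol{\phi}_d\mid_{K^d}$) sont traités correctement.

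Deux réserves cependant. D'abord, votre justification pour les composantes $G^j(F)_{y,\mathbf{s}_{j-1}+}$ avec $j>i+1$ est fausse telle quelle : vous affirmez que $\hat{\boldsymbol{\phi}_i}$ y est triviale \og car $\mathbf{s}_{j-1}>\mathbf{r}_i$ \fg, mais $\mathbf{s}_{j-1}=\mathbf{r}_{j-1}/2$ n'excède $\mathbf{r}_i$ que si $\mathbf{r}_{j-1}\geq 2\mathbf{r}_i$, ce qui n'est nullement garanti par la seule croissance de la suite $\overrightarrow{\mathbf{r}}$. De même $\kappa_i$ n'est pas étendue trivialement à ces facteurs : dans la construction de Yu elle y agit par le scalaire $\hat{\boldsymbol{\phi}_i}$ lui-même (qui est défini sur tout $G(F)_{y,\mathbf{s}_i+}$ mais n'y est pas trivial en général). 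La conclusion recherchée ($\kappa_i$ et $\hat{\boldsymbol{\phi}_i}$ coïncident à un facteur de multiplicité près sur ces composantes) reste vraie, mais pour cette raison-là et non celle que vous invoquez. Ensuite, l'étape que vous identifiez vous-même comme le c\oe ur de l'argument --- le fait que la représentation de Heisenberg--Weil restreinte à $K^i_+$ soit $\hat{\boldsymbol{\phi}_i}$-isotypique --- n'est qu'esquissée : le lemme de Schur ne s'applique que si l'on vérifie d'abord que la conjugaison par $K^i_+$ agit trivialement sur $J^{i+1}/J^{i+1}_+$ (ce qui découle de $[\mathfrak{g}(F)_{y,0+},\mathfrak{g}(F)_{y,\mathbf{s}_i}]\subset\mathfrak{g}(F)_{y,\mathbf{s}_i+}$) et que le scalaire obtenu est bien $\hat{\boldsymbol{\phi}_i}$ et non une autre extension ; c'est exactement le contenu technique du \S 4 de \cite{Yu}, auquel vous renvoyez et auquel le texte lui-même renvoie intégralement.
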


 \begin{theo}\begin{enumerate} \item[(Yu)]\cite[4.6 , §15]{Yu} La représentation $\mathrm{c-ind}_{K^d}^{G(F)}\rho_d$ est irréductible et supercuspidale.
 \item[(Kim)]\cite[19.1]{Kim} Si la caractéristique du corps résiduel de $F$ est suffisamment grande alors pour toute représentation supercuspidale $\pi$ de $G(F)$, il existe $(\overrightarrow{G},y,\overrightarrow{\mathbf{r}},\rho ,\overrightarrow{\boldsymbol{\phi}})$, tel que $\pi=\mathrm{c-ind}_{K^d}^{G(F)} \rho _d( \overrightarrow{G},y,\overrightarrow{\mathbf{r}},\rho ,\overrightarrow{\boldsymbol{\phi}})$.
 \end{enumerate} \end{theo}

Posons ${}^{\circ}\rho_d={}^{\circ}\rho_d( \overrightarrow{G},y,\overrightarrow{\mathbf{r}},\rho , \overrightarrow{\boldsymbol{\phi}})=\rho_d \mid _{^{\circ}K^d}$ et  ${}^{\circ}\lambda ={}^{\circ} \lambda ( \overrightarrow{G},y,\overrightarrow{\mathbf{r}}, \overrightarrow{\boldsymbol{\phi}}) = \lambda \mid _{^{\circ}K^d}$.

Posons aussi ${}^{\circ} \kappa _i = \kappa _i  \mid _{ {}^{\circ} K_d }$.
 
 \section{Strate simple modérée}\label{stsm}

On appellera strate simple modérée une strate simple $[\A,n,r,\beta]$ tel que $F[\beta]$ est une extension modérément ramifiée de $F$. On prouve dans cette section quelques propositions  qui seront utiles pour donner le lien recherché.\\

 Soit $[\mathfrak{A},n,r,\beta]$ une strate pure dans l'algèbre $A=\mathrm{End}_F(V)$ tel que $F[\beta]$ est modérément ramifiée sur $F$, notons $E$ le corps $F[\beta]$. Soit $s:A\to B_E$ la corestriction modérée qui est l'identité sur $B_{\beta}=B_E=\mathrm{End}_{F[\beta]}(V)$, on notera donc $s(b)$ par $"b"$ lorsque $b$ appartient à $B_E$.   Notons $\mathfrak{P}$ le radical de Jacobson de $\mathfrak{A}$, notons $\mathfrak{B}_E$ l'intersection de $\mathfrak{A}$ et $B_E$, enfin notons $\mathfrak{Q}_E$ l'intersection de $\mathfrak{P}$ et $B_E$. Ainsi $\B _E$ est un $\mf{o}_{E}-$ordre héréditaire et  $\mf{Q}_E$ est le radical de Jacobson de $\mf{B} _E$ .  \\
  
  Voici une proposition analogue à celle de \cite[2.2.3]{BK}.
\begin{prop} \label{modif2.2.3}Supposons que $[\mathfrak{A},n,r,\beta]$ est simple. Soit $b\in \mathfrak{Q}_E^{-r}$, et supposons que la strate $[\mathfrak{B}_E,r,r-1,b]$ est simple.  Alors:
\begin{enumerate}
\item[($i$)]La strate $[\mathfrak{A},n,r-1,\beta+b]$ est simple.

\item[($ii$)]Le corps $F[\beta +b]$ est égal au corps $F[\beta , b ]$.

\item[($iii$)]$k_0(\beta +b,\mathfrak{A})= \left\{ \begin{array}{ll}
        -r=k_0(b,\mathfrak{B}_E)$ si $b \not \in E \\
        k_0(\beta , \mathfrak{A})$ si $b\in E
    \end{array}  
\right.$

\end{enumerate}
\end{prop}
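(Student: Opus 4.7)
The plan is to first establish purity of $[\A, n, r-1, \beta+b]$, then deduce (ii) using tameness of $E/F$, then compute $k_0(\beta+b, \A)$ to obtain (iii), from which simplicity in (i) follows. The strategy closely parallels \cite[2.2.3]{BK}, with the tameness hypothesis and the existence of a tame corestriction that is identity on $B_E$ providing some simplifications. For purity: since $b \in B_E$ commutes with $\beta$, one has $F[\beta, b] = E[b]$, and purity of $[\B_E, r, r-1, b]$ as a stratum in $B_E$ gives that $E[b]$ is a field, that $E[b]^{\times} \subset \mf{K}(\B_E) \subset \mf{K}(\A)$, and that $\nu_{\B_E}(b) = -r$. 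Hence $F[\beta+b]$ is a sub-$F$-algebra of the field $E[b]$, so itself a field, and $F[\beta+b]^{\times} \subset \mf{K}(\A)$; since $r < n$ by simplicity of $[\A, n, r, \beta]$, $\nu_\A(b) \geq -r > -n$ yields $\nu_\A(\beta+b) = -n$.

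For assertion (ii), the inclusion $F[\beta+b] \subseteq F[\beta, b]$ is tautological. For the reverse, I would invoke tameness of $E/F$ through a Krasner-type argument: letting $\sigma$ be any $F$-automorphism of the Galois closure of $F[\beta, b]$ fixing $\beta+b$, one has $\sigma(\beta) - \beta = -(\sigma(b) - b)$. Tameness of $E/F$ forces $\ord(\sigma(\beta) - \beta)$ to be close to $\ord(\beta)$ when $\sigma(\beta) \neq \beta$, whereas $\ord(\sigma(b) - b)$ is bounded below by $\ord(b) > \ord(\beta)$ (translating $\nu_\A(b) \geq -r > -n = \nu_\A(\beta)$ to $\ord$); the strict inequality forces $\sigma(\beta) = \beta$, and hence $\sigma(b) = b$. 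Therefore $F[\beta+b]$ contains both $\beta$ and $b$.

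For (iii) and thereby completing simplicity in (i), the central tool is the tame corestriction $s: A \to B_E$ with $s|_{B_E} = \mathrm{Id}$ from Prop 1.3.4(ii). For $x \in \A$, the identity
\[
s([\beta+b, x]) = [\beta, s(x)] + [b, s(x)] = [b, s(x)],
\]
where the first commutator vanishes since $s(x) \in B_E$ centralizes $\beta$, transports information between $\mf{N}_k(\beta+b, \A)$ and $\mf{N}_k(b, \B_E)$. In the case $b \notin E$, by (ii) $B_{\beta+b} = B_{E[b]}$; if $k \geq -r+1 > k_0(b, \B_E)$, then $x \in \mf{N}_k(\beta+b, \A)$ forces $s(x) \in \mf{N}_k(b, \B_E) \subseteq \B_{E[b]} + \mf{Q}_E$ by simplicity of $[\B_E, r, r-1, b]$, and the splitting $\A = (\A \cap B_E) \oplus \ker(s|_\A)$ then yields $x \in \B_{\beta+b} + \p$, giving $k_0(\beta+b, \A) \leq -r$; the reverse inequality is exhibited by lifting an element realizing $k_0(b, \B_E) = -r$. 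In the case $b \in E$, (ii) gives $F[\beta+b] = E$ and hence $B_{\beta+b} = B_E$; since $b$ is central in $B_E$, $[b, s(x)] = 0$, and a direct comparison of $\mf{N}_k(\beta+b, \A)$ and $\mf{N}_k(\beta, \A)$ modulo $\B_E + \p$ (using $\nu_\A(b) \geq -r > k_0(\beta, \A)$) yields $k_0(\beta+b, \A) = k_0(\beta, \A)$. In both cases $-k_0(\beta+b, \A) \geq r > r-1$, so the stratum is simple, completing (i).

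The main obstacle is assertion (ii), which is genuinely where tameness of $E/F$ enters; the Galois-theoretic comparison of valuations must be handled carefully, especially verifying that the comparison extends consistently across all extensions involved and that no wild behavior invalidates the Krasner step. The bookkeeping in case $b \notin E$ of (iii), in particular the lift from $s(x) \in \B_{E[b]} + \mf{Q}_E$ to $x \in \B_{\beta+b} + \p$ via the splitting induced by $s$, is also delicate and requires care.
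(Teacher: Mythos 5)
Le point critique est l'assertion ($ii$), et c'est précisément là que votre argument ne tient pas. Vous affirmez que « la modération de $E/F$ force $\ord(\sigma(\beta)-\beta)$ à être proche de $\ord(\beta)$ dès que $\sigma(\beta)\neq\beta$ » : c'est faux pour un élément quelconque d'une extension modérée. Prenez $E/F$ totalement et modérément ramifiée de degré $e>1$ et $\beta=\pi_F^{-n'}+\pi_E$ : alors $\beta$ engendre $E$, $\ord(\beta)=-n'$, mais $\sigma(\beta)-\beta=(\zeta-1)\pi_E$ est de valuation $1/e$, arbitrairement loin de $\ord(\beta)$. L'énoncé correct, indispensable ici, fait intervenir l'hypothèse de simplicité de $[\A,n,r,\beta]$ — que votre preuve de ($ii$) n'utilise nulle part : c'est la majoration $\ord(\sigma(\beta)-\beta)\le k_0(\beta,\A)/e(\A\mid\mf{o}_F)<-r/e(\A\mid\mf{o}_F)\le\ord(b)\le\ord(\sigma(b)-b)$ qui rend l'argument de type Krasner concluant, et le lien entre $k_0(\beta,\A)$ et les différences de conjugués galoisiens (un avatar du lemme de Howe, via \cite[1.4.13]{BK}) est un résultat substantiel que vous ne démontrez ni ne citez. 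En l'état, ($ii$) n'est pas établi, et ($i$), ($iii$) en dépendent (vous utilisez $B_{\beta+b}=B_{E[b]}$).

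Pour comparaison, le texte ne redémontre rien de tout cela : il pose $E_1=F[\beta,b]$, vérifie $E_1^\times\subset\mf{K}(\A)$ (c'est ici, et seulement ici, que la modération intervient, via $s(b)=b$, cf.\ la remarque qui suit la proposition), passe à l'algèbre intérieure $\mathrm{End}_F(E_1)$ par une $(W,E_1)$-décomposition où $E_1$ devient un sous-corps maximal, renormalise $n$, $r$ et $k_0$ par \cite[1.4.13]{BK}, applique \cite[2.2.3]{BK} tel quel, puis redescend. Votre esquisse de ($iii$) via la suite exacte $0\to\B\to\A\to\A\xrightarrow{s}\B\to 0$ et l'identité $s([\beta+b,x])=[b,s(x)]$ revient essentiellement à refaire la preuve de \cite[2.2.3]{BK} ; c'est faisable mais les points que vous signalez vous-même comme « délicats » (le relèvement de $s(x)\in\B_{E[b]}+\mf{Q}_E$ à $x\in\B_{\beta+b}+\p$) sont exactement le contenu technique de loc.\ cit., qu'il est plus sûr d'invoquer après réduction au cas maximal.
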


\begin{proof} Posons $E_1=F[\beta , b ]$. Soit $\iota _W$ une $(W,E_1)$-décomposition de $A$ \cite[1.2.6]{BK}.

Notons $\mathfrak{A}(E_1)=\{ x\in \mathrm{End}_F(E_1) \mid x(\mathfrak{p}_{E_1} ^i) \subset \mathfrak{p} _{E_1} ^i ~~\forall i \in \Z \}$, notons $B_{E_1}=\mathrm{End}_{E_1}(V)$ et  $\mathfrak{B}_{E_1}=\mathfrak{A}\cap \mathrm{End}_{E_1}(V)$. 

Soit $\mathcal{L}=\{L_i\}_{i\in \Z} $ la suite de $\mathfrak{o}_{F}$-réseaux telle que $\A=\{x\in A \mid x(L_i)\subset L_i ~\forall i \in \Z \}$, alors $\B _E=\{x\in B_E \mid x(L_i)\subset L_i ~\forall i \in \Z \}$. Remarquons que par définition \cite[1.2.1 ,1.2.4]{BK} $\mf{K}(\A)=\{x\in G \mid x(L_i) \in \mathcal{L}~ \forall i \in \Z\},$ et $\mf{K}(\B _E)=\{x\in G_E \mid x(L_i) \in \mathcal{L}~ \forall i \in \Z\}.$ Donc $\mf{K}(\B _{E})\subset \mf{K}(\A)$, donc par hypothèse $\beta , b \in \mf{K}(\A),$ donc $E_1^{\times}=F[\beta , b ]^{\times} \subset \mf{K}(\A).$

D'après \cite[1.2.8]{BK} la $(W,E_1)-$décomposition se  restreint en une $(W,E_1)$-décomposition de $\A$.

Posons $B_E(E_1)=End_E(E_1)$ et $\mathfrak{B}_E(E_1)= B_E(E_1) \cap \A _{E_1}$.

Posons $n(E_1)=n/e(\mathfrak{B}_{E_1}\mid \mathfrak{o}_{E_1})$, de même posons $r(E_1)=r/e(\mathfrak{B}_{E_1}\mid \mathfrak{o}_{E_1})$.

On a alors $\nu _{\A(E_1)}(\beta)=-n(E_1)$ et $\nu_{\B _E (E_1)} (b) = -r(E_1)$ .

D'après \cite[1.4.13]{BK}, on a: 

 $k_0(\beta , \mathfrak{A}(E_1))=k_0(\beta,\A)/e(\B_{E_1} \mid \mf{o}_{E_1}) ,$

$k_0(b,\B_E(E_1))=k_0(b,\B _E ) /e( \B _{E_1} \mid \mf{o} _{E_1} )$

Ainsi $[\A (E_1),n(E_1),r(E_1),\beta ]$ et $[\B_E(E_1),r(E_1),r(E_1)-1,b]$ sont des strates simples et vérifient les conditions de \cite[2.2.3]{BK}.

Donc $[\A(E_1),n(E_1),r(E_1)-1,\beta+b]$ est simple le corps $F[\beta +b] =F[\beta , b ]$ est égal au corps $F[\beta , b ]$ et $k_0(\beta +b , \A(E_1) )= \left\{ \begin{array}{ll}
        -r(E_1)=k_0(b,\A(E_1)$ si $b \not \in E  \\
       k_0(\beta , \A(E_1)$ si $b\in E 
    \end{array}  
\right.$

Déduisons en l'énoncé.

-$F[\beta +b]$ est un corps 

-$\nu _{\A} (\beta +b) =-n$

-d'après \cite[1.4.13]{BK} $k_0(\beta +b ,\A )=k_0 (\beta +b, \A ( E_1) ) \times e(\B _{E_1} \mid \mf{o} _{E_1}),$ donc d'après ce qui précède $k_0(\beta +b ,\A )= \left\{ \begin{array}{ll}
        
-r =k_0(b,\B _E )$ si $b \not \in E  \\
       k_0(\beta , \A )$ si $b \in E)
    \end{array}  
\right.$

donc $[\A,n,r-1,\beta +b]$ est une strate simple.

\end{proof}

\begin{rema} Dans les notations \cite[2.2.3]{BK}, si l'on supprime l'hypothèse que $F[\beta,s(b)]$ est maximal (en gardant l'hypothèse que c'est un corps), on a $F[\beta,s(b)]^{\times}\subset \mf{K} (\A)$ mais on n'a pas à priori $F[\beta , b]^{\times}\subset \mf{K} (\A)$ (en fait $F[\beta , b]$ n'est pas nécessairement un corps). Ici, puisque $E/F$ est modérée et $s(b)=b$, on a $F[\beta,b]^{\times}\subset \mf{K}(\A)$.
\end{rema}
Rappelons que $[\mathfrak{A},n,r,\beta]$ une strate pure dans l'algèbre $A=\mathrm{End}_F(V)$ tel que $F[\beta]$ est modérément ramifiée sur $F$.

 \begin{prop}  \cite[3.1]{Esse} \label{approxmodéré1} Il existe un élément $\gamma$ dans le corps $F[\beta]$ tel que la strate $[\mathfrak{A},n,r,\gamma]$ est simple et équivalente à $[\mathfrak{A},n,r,\beta]$.
 \end{prop}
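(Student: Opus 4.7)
The proposition sharpens Bushnell--Kutzko's approximation Theorem~\ref{approxi}(i) by locating the simple approximating element inside $E := F[\beta]$, rather than merely inside $A$. Tameness of $E/F$ enters twice: first, through the existence of a tame corestriction $s\colon A\to B_E$ with $s|_{B_E}=\mathrm{id}$ (Proposition 1.3.4(ii)); second, through the stability of pure strata under the operations in Proposition~\ref{modif2.2.3}, which was set up precisely for this purpose.

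\textbf{Trivial reductions.} If $\beta\in F$ then $k_0(\beta,\A)=-\infty$ and the stratum is simple for any $r$, so I would take $\gamma=\beta$. Similarly, if $r<-k_0(\beta,\A)$ then $[\A,n,r,\beta]$ is already simple and $\gamma=\beta$ works. I therefore assume $\beta\notin F$ and $r\geq r_0:=-k_0(\beta,\A)$. I would then induct on the length $s$ of the approximation sequence $[\A,n,r_i,\beta_i]$ from Theorem~\ref{approxi} applied to $[\A,n,0,\beta]$. The base case is when $\beta$ is already minimal over $F$: Proposition~\ref{minider} then gives that $[\A,n,r,\beta]$ is simple for every $r<n$, and $\gamma=\beta$ concludes.

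\textbf{Inductive step.} Apply Theorem~\ref{approxi}(i) at level $r_0$ to produce a simple stratum $[\A,n,r_0,\tilde\gamma]$ with $\beta-\tilde\gamma\in\p^{-r_0}$; a priori $\tilde\gamma$ lies in $A$, not in $E$. Applying the tame corestriction $s$ and using $s(\beta)=\beta$, I would obtain $b:=\beta-s(\tilde\gamma)\in\mathfrak{Q}_E^{-r_0}$, so that the approximation has been transported inside $B_E$. At this point I would view the problem inside the centralizer: $B_E=\End_E(V)$ is a central simple $E$-algebra, $s(\tilde\gamma)\in B_E$, and by Theorem~\ref{approxi}(ii) the derived stratum $[\B_E,r_0,r_0-1,b]$ is equivalent to a simple stratum in $B_E$. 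The inductive hypothesis, applied to this strictly shorter approximation problem in $B_E$ (over the tame base $E$), furnishes an approximation of $b$ inside the subfield of $B_E$ generated over $E$ by $b$. Plugging this back through Proposition~\ref{modif2.2.3}, whose conclusions $(i)$--$(iii)$ precisely control the simplicity of $[\A,n,r-1,\beta+b']$ and the field $F[\beta+b']=F[\beta,b']$, delivers a candidate $\gamma\in E$ (obtained by combining $\beta$ with the extracted $E$-part) together with the guarantee that $[\A,n,r,\gamma]$ is simple.

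\textbf{Main obstacle.} The nontrivial step is the descent from $B_E$ to $E$: the element $s(\tilde\gamma)$ is a priori only in $B_E$, and one must extract from it an $E$-valued approximation of $\beta$ accurate to order $\p^{-r}$ while simultaneously preserving the inequality $-k_0(\gamma,\A)>r$ needed for simplicity. The crucial input is Proposition~\ref{modif2.2.3}, whose proof was specifically arranged to work without the maximality hypothesis of \cite[2.2.3]{BK}: in the tame setting $F[\beta,b]^\times\subset\mathfrak{K}(\A)$ automatically, so one can iterate the perturbation step and keep track of $k_0$ through each stage of the approximation sequence. Once the induction closes, the accumulated corrections assemble into a single element $\gamma\in F[\beta]$ with the required properties.
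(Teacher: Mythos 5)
The paper offers no proof of this statement: it is imported wholesale from \cite[3.1]{Esse}, so there is nothing internal to compare your argument against; I can only judge it on its own terms, and as written the inductive step does not work. The first concrete problem is your appeal to Theorem~\ref{approxi}($ii$). You apply it to the stratum $[\mathfrak{B}_E,r_0,r_0-1,b]$ with $b=\beta-s(\tilde\gamma)$, where $s$ is the corestriction relative to $F[\beta]/F$. But that theorem concerns the derived stratum $[\mathfrak{B}_{\tilde\gamma},r_0,r_0-1,s_{\tilde\gamma}(\beta-\tilde\gamma)]$, formed with the corestriction $s_{\tilde\gamma}$ relative to $F[\tilde\gamma]/F$ and living in the centralizer $B_{\tilde\gamma}$ of the \emph{approximant}, not in the centralizer $B_E$ of $\beta$. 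These are different algebras and different corestrictions (one only has $B_E\subseteq B_{\tilde\gamma}$ after one already knows $\tilde\gamma\in F[\beta]$, which is the very thing to be proved), and no simplicity or equivalence-to-simple statement is available for your stratum in $B_E$.

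The second, more fundamental problem is that your machinery points in the wrong direction. The proposition demands $\gamma$ \emph{in} $F[\beta]$, i.e.\ a coarsening: $F[\gamma]$ must be a subfield of $F[\beta]$. Everything you build enlarges fields instead: the inductive hypothesis applied inside $B_E$ would return an element of $E[b]$, an extension of $E$ sitting in $B_E$, and Proposition~\ref{modif2.2.3} converts a correction $b'$ into $\beta+b'$ generating the \emph{larger} field $F[\beta,b']$ --- it is the refinement step $r\to r-1$ of the approximation process, not a descent into $F[\beta]$. The ``extracted $E$-part'' on which your conclusion rests is never defined, and no step of the argument produces an element of $E$. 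A workable proof must use tameness constructively: expand $\beta$ with respect to a uniformizer $\pi_E$ satisfying $\pi_E^e z=\pi_F$ and the prime-to-$p$ roots of unity (the standard-representative apparatus of \S\ref{emrs}), take for $\gamma$ the truncation of that expansion at the depth imposed by $\p^{-r}$ --- which automatically lies in a subfield of $F[\beta]$ --- and verify $k_0(\gamma,\A)<-r$ via the description of $k_0$ for elements of tame extensions (e.g.\ through the valuations of $\sigma(\gamma)-\gamma$ over the $F$-embeddings $\sigma$, as in Proposition~\ref{CE}). Your proposal uses tameness only through the existence of a corestriction that is the identity on $B_E$, which is not enough to close the argument.
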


 \begin{prop} \label{approxmodéré2} $\widetilde{BKH}$ 
 
 Supposons de plus que $r=-{k_0}_F(\beta,\mathfrak{A})$. Alors il existe un élément $\gamma$ dans le corps $F[\beta]$ tel que les deux conditions suivantes soient vérifiées:
  
 - la strate $[\mathfrak{A},n,r,\gamma]$ est simple et équivalente à $[\mathfrak{A},n,r,\beta]$.
 
 - la strate $[\mathfrak{B}_{\gamma},r,r-1,\beta - \gamma ]$ est simple.
 
 \end{prop}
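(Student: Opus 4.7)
Le plan est d'appliquer la Proposition~\ref{approxmodéré1} à deux reprises — une fois dans l'algèbre ambiante $A$, puis une fois dans l'algèbre centralisatrice d'une approximation préliminaire — en combinaison avec le Théorème~\ref{approxi}(ii). On commence par invoquer la Proposition~\ref{approxmodéré1} sur la strate pure $[\A,n,r,\beta]$ pour obtenir $\gamma_0\in F[\beta]$ tel que $[\A,n,r,\gamma_0]$ soit simple et équivalente à $[\A,n,r,\beta]$. Comme $F[\gamma_0]\subseteq F[\beta]$ et que $F[\beta]/F$ est modérée, la sous-extension $F[\gamma_0]/F$ est modérée, ce qui autorise (partie (ii) de la proposition sur les corestrictions modérées) le choix d'une corestriction modérée $s_{\gamma_0}\colon A\to B_{\gamma_0}$ qui est l'identité sur $B_{\gamma_0}$. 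Puisque $\gamma_0\in F[\beta]$, le corps $F[\beta]$ est inclus dans $B_{\gamma_0}$~; en particulier $\beta-\gamma_0\in B_{\gamma_0}$ et $s_{\gamma_0}(\beta-\gamma_0)=\beta-\gamma_0$.

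Par le Théorème~\ref{approxi}(ii) appliqué à $[\A,n,r,\beta]$ avec l'approximant simple $\gamma_0$, on déduit alors que la strate dérivée $[\mf{B}_{\gamma_0},r,r-1,\beta-\gamma_0]$ est équivalente à une strate simple dans $B_{\gamma_0}$ (vu comme algèbre sur $F[\gamma_0]$). On applique ensuite à nouveau la Proposition~\ref{approxmodéré1}, cette fois dans $B_{\gamma_0}$ sur $F[\gamma_0]$~: l'extension $F[\gamma_0][\beta-\gamma_0]=F[\beta]$ est modérée sur $F[\gamma_0]$ puisque la modération passe aux sous-extensions, donc l'hypothèse est satisfaite. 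Ceci produit un élément $c\in F[\beta]$ tel que $[\mf{B}_{\gamma_0},r,r-1,c]$ soit une strate simple authentique dans $B_{\gamma_0}$ et tel que $(\beta-\gamma_0)-c\in\mf{Q}_{\gamma_0}^{1-r}$.

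On pose alors $\gamma:=\beta-c\in F[\beta]$, de sorte que $\beta-\gamma=c$ coïncide avec l'élément rendu simple par la seconde application. Comme $\gamma-\gamma_0=(\beta-\gamma_0)-c\in\mf{Q}_{\gamma_0}^{1-r}\subseteq\p^{1-r}\subseteq\p^{-r}$, les strates $[\A,n,r,\gamma]$, $[\A,n,r,\gamma_0]$ et $[\A,n,r,\beta]$ sont mutuellement équivalentes~; la simplicité de $[\A,n,r,\gamma]$ devra s'ensuivre de celle de $[\A,n,r,\gamma_0]$ via la stabilité de $k_0$ sous de telles perturbations fines (argument dans l'esprit de la Proposition~\ref{modif2.2.3}). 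La seconde propriété, à savoir que $[\mf{B}_\gamma,r,r-1,c]$ est simple, s'obtiendra en comparant les données centralisatrices $(B_\gamma,\mf{B}_\gamma)$ avec $(B_{\gamma_0},\mf{B}_{\gamma_0})$.

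On s'attend à ce que cette dernière comparaison constitue l'obstacle principal~: elle exige de suivre comment le corps $F[\gamma]$, l'algèbre $B_\gamma$ et l'ordre héréditaire $\mf{B}_\gamma$ se transforment lorsque $\gamma_0$ est perturbé par un élément de $\p^{1-r}\cap F[\beta]$ (en général $F[\gamma]$ ne coïncide pas avec $F[\gamma_0]$), et de montrer que la simplicité de $[\mf{B}_{\gamma_0},r,r-1,c]$ se transfère véritablement à $[\mf{B}_\gamma,r,r-1,c]$. L'hypothèse de modération sur $F[\beta]/F$ sera ici déterminante, comme dans la remarque qui suit la Proposition~\ref{modif2.2.3}, pour garantir que le cadre de la «~corestriction identité~» établi à la première étape se prolonge à la situation perturbée et que le niveau de la perturbation reste compatible avec la filtration attachée à $\gamma_0$.
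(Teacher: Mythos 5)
Votre stratégie générale (double application de la Proposition \ref{approxmodéré1}) ressemble en surface à celle du texte, mais elle laisse ouverts précisément les points où se concentre la difficulté. Premier problème : pour appliquer la Proposition \ref{approxmodéré1} à la strate dérivée $[\mf{B}_{\gamma_0},r,r-1,\beta-\gamma_0]$, il faut que celle-ci soit \emph{pure}, donc en particulier que $\nu_{\mf{B}_{\gamma_0}}(\beta-\gamma_0)=-r$ exactement. L'équivalence $[\A,n,r,\gamma_0]\sim[\A,n,r,\beta]$ ne donne que la minoration $\nu_{\mf{B}_{\gamma_0}}(\beta-\gamma_0)\geq -r$, et le Théorème \ref{approxi}(ii) n'affirme que l'équivalence de la strate dérivée à une strate simple, pas sa pureté. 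C'est ici qu'intervient de façon essentielle l'hypothèse $r=-{k_0}_F(\beta,\A)$ : la preuve du texte établit la majoration $\nu_{\mf{B}_{\gamma_0}}(\beta-\gamma_0)\leq -r$ en comparant ${k_0}_{F[\gamma_0]}(\beta-\gamma_0,\mf{B}_{\gamma_0})$ et ${k_0}_F(\beta,\A)$ au moyen des ensembles $\mathfrak{N}_k$ et de \cite[1.4.15]{BK}. Ce calcul, qui est le c\oe ur quantitatif de la démonstration, est absent de votre proposition.

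Second problème, plus grave : votre redéfinition $\gamma:=\beta-c$ change l'élément autour duquel on centralise. La conclusion demandée porte sur $\mf{B}_{\gamma}$ et non sur $\mf{B}_{\gamma_0}$, et, comme vous le reconnaissez vous-même, le transfert de la simplicité de $[\mf{B}_{\gamma_0},r,r-1,c]$ à $[\mf{B}_{\gamma},r,r-1,c]$ n'est pas effectué ; la simplicité de $[\A,n,r,\gamma]$ pour ce nouveau $\gamma$ n'est pas davantage établie, car elle ne découle pas de la seule équivalence avec la strate simple $[\A,n,r,\gamma_0]$ (c'est tout l'objet du Théorème \ref{approxi}(i) qu'une strate pure équivalente à une strate simple n'est pas simple en général). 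La preuve du texte contourne entièrement cet obstacle en ne changeant jamais $\gamma$ : elle conserve le $\gamma$ fourni par \ref{approxmodéré1}, montre que $[\mf{B}_{\gamma},r,r-1,\beta-\gamma]$ est pure (le calcul ci-dessus), puis qu'elle est simple en introduisant un élément auxiliaire $\alpha$ simple et équivalent, en utilisant \ref{modif2.2.3} et un comptage de degrés via \cite[2.4.1 (i)]{BK} pour obtenir $F[\gamma,\alpha]=F[\beta]$, donc la maximalité du corps engendré après réduction au cas maximal, et en concluant par \cite[2.2.2]{BK}. C'est cet usage de $\alpha$ comme simple outil, sans jamais remplacer $\gamma$, qui manque à votre argument et qui le rendrait complet.
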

 
 \begin{proof}  En utilisant un argument similaire à la proposition \ref{modif2.2.3}, il suffit de traiter le cas où $F[\beta]$ est un sous corps maximal de l'algèbre $A=\mathrm{End}_F(V)$. D'après \ref{approxmodéré1} il existe une strate simple [$\mathfrak{A} , n ,r ,\gamma $] telle que 
 $[\mathfrak{A} , n ,r, \beta ]\sim[\mathfrak{A} , n ,r, \gamma ]$ et $F[\gamma] \subset F[\beta]$.\\
 
Montrons que  [$\mathfrak{B}_{\gamma},r ,r-1, \beta -\gamma $] est une strate pure dans l'algèbre $\mathrm{End}_{F[\gamma]}(V)$,
 
 -$F[\gamma][\beta -\gamma ] = F[\beta]$ est un corps

 -$F[\gamma][\beta-\gamma]^{\times} \subset \mf{K}(\mf{B}_{\gamma})=\mf{K}(\mf{A})\cap B_{\gamma}^{\times}$

 - Il reste à montrer que $\nu_{\mf{B}_{\gamma}}(\beta - \gamma )=-r$.
 
 Montrons que:

$-(a)\nu _{\mf{B}_{\gamma}}(\beta -\gamma ) \geq -r $

$-(b) \nu _{\mf{B}_{\gamma}}(\beta -\gamma ) \leq -r$

$(a)[\mathfrak{A} , n ,r, \beta ]\sim[\mathfrak{A} , n ,r, \gamma ]$ et $F[\gamma] \subset F[\beta]$ donc $\beta-\gamma \in \mf{P}^{-r} \cap B_{\gamma}=\mf{Q}_{\gamma}^{-r}$, donc $\nu_{\mf{B}_{\gamma}}(\beta -\gamma) \geq  -r$

$(b)$ D'après \cite[1.4.15]{BK}, $\nu_{\mf{B}_{\gamma}}(\beta -\gamma) \leq {k_0}_{F[\gamma]}(\beta -\gamma,\mf{B}_{\gamma})$, de plus par hypothèse, ${k_0}_{F}(\beta,\mf{A})=-r.$

Il suffit donc de montrer que ${k_0}_{F[\gamma]}(\beta -\gamma ,\mf{B}_{\gamma}) \leq {k_0}_{F}(\beta ,\mf{A})$. On a \cite[1.4.5]{BK}:

${k_0}_{F[\gamma]}(\beta -\gamma ,\mf{B}_{\gamma}) =\mathrm{max}\{k\in \Z \mid \mathfrak{N} _k ( \beta - \gamma ,\mf{B}_{\gamma}) \not \subset \mf{o}_{F[\beta]} +\mf{Q}_{\gamma}\}$~~~~\footnote{Ici $\mf{B}_{\beta}=\mf{o}_{F[\beta]} $ car $F[\beta]$ est un sous corps maximal de l'algèbre $\mathrm{End}_{F[\gamma]}(V)$ \cite[1.2.2,1.2.3,1.4.5]{BK}}

${k_0}_F (\beta ,\mf{A}) = \mathrm{max}\{ k\in \Z \mid \mathfrak{N} _k(\beta ,\mf{A}) \not \subset \mf{o}_{F[\beta]} +\mf{P} \}$~~~~\footnote{Ici $\mf{B}_{\beta}=\mf{o}_{F[\beta]} $ car $F[\beta]$ est un sous corps maximal de l'algèbre $A=\mathrm{End}_{F}(V)$\cite[1.2.2,1.2.3,1.4.5]{BK}}

Il suffit donc de montrer que ${\mathfrak{N}} _{{k_0}_F(\beta,\mf{A})+1} (\beta - \gamma ,\mf{B}_{\gamma}) \subset \mf{o}_{F[\beta]} +\mf{Q}_{\gamma}$. On a \cite[1.4.3]{BK}:

${\mathfrak{N}}_{{k_0}_F(\beta ,\mf{A})+1}(\beta -\gamma , \mf{B} _{\gamma} ) =$
$\{ x\in \mf{B} _{\gamma} \mid  (\beta - \gamma )x -x(\beta - \gamma ) \in \mf{Q}_{\gamma}^{{k_0}_F(\beta,\mf{A})+1}\}=$\footnote{car $\gamma x =x \gamma,$ puisque $x\in \mf{B}_{\gamma}\subset B_{\gamma}$}$\{ x\in \mf{B} _{\gamma} \mid  \beta  x -x\beta  \in \mf{Q}_{\gamma}^{{k_0}_F(\beta,\mf{A})+1}\}\subset  \mathfrak{N} _{ {k_0}_F (\beta, \mf{A})+1}(\beta ,\mf{A} ) \cap \mf{B}_{\gamma}$
$ \subset$ $(\mf{o} _{F[\beta]} +\mf{P}) \cap \mf{B}_{\gamma} $
$= \mf{o} _{F[\beta]} +\mf{Q} _{\gamma}$

  cela conclut (b)\\
 
 Montrons que [$\mathfrak{B}_{\gamma},r ,r-1, \beta -\gamma $] est simple, puisqu'elle est pure et que $F[\beta -\gamma]$ est modérément ramifiée sur $F[\gamma]$, d'après \ref{approxmodéré1} il existe une strate simple  [$\mathfrak{B}_{\gamma},r ,r-1, \alpha$] tel que 
  $[\mathfrak{B}_{\gamma},r ,r-1, \alpha ] \sim [\mathfrak{B}_{\gamma},r ,r-1, \beta -\gamma ]$ et $F[\gamma][\alpha] \subset F[\gamma][\beta- \gamma ]$.
  D'après \ref{modif2.2.3}, $[\mathfrak{A},n,r-1,\gamma+\alpha ]$ est simple et $F[\gamma +\alpha]=F[\alpha,\gamma]$.

   On a $\alpha \equiv \beta -\gamma \pmod{ \mathfrak{Q}_{\gamma}^{-(r-1)}} $ et donc 
  $\gamma +\alpha \equiv  \beta  \pmod{  \mathfrak{P}^{-(r-1)}}$, ainsi  \begin{center}
  $[\mathfrak{A},n,r-1,\gamma+\alpha ]\sim [\mathfrak{A},n,r-1,\beta ].$\end{center}
  
  Or  $[\mathfrak{A},n,r-1,\gamma+\alpha ]$ et $[\mathfrak{A},n,r-1,\beta ]$ sont simples, la première par construction et la deuxième par hypothèse $({k_0}_F(\beta,\mf{A})=-r)$.
  
  Donc d'après \cite[2.4.1 (i)]{BK}, $[F[\gamma + \alpha ]:F] =[F[\beta]:F]$ et ainsi  $F[\gamma,\alpha]=F[\gamma + \alpha ]=F[\beta]$.
  
  Donc 
  
 - $[\mathfrak{B}_{\gamma},r ,r-1, \alpha ]$ est une strate simple dans l'algèbre $\mathrm{End}_{F[\gamma]}(V)$
 
 -  $F[\gamma][\alpha]$ et un sous corps maximal de la  $F[\gamma]$- algèbre $\mathrm{End}_{F[\gamma]}(V)$

 -$[\mathfrak{B}_{\gamma},r ,r-1, \alpha ] \sim [\mathfrak{B}_{\gamma},r ,r-1, \beta -\gamma ]$

 par conséquent, d'après \cite[proposition 2.2.2]{BK}, $[\mathfrak{B}_{\gamma},r ,r-1, \beta -\gamma ]$ est simple.
 
 \end{proof}

 \section{Élément minimaux et représentant standard }\label{emrs}
 
 Rappelons que $F$ désigne un corps local non archimédien.
 
 \begin{prop} \cite[Chapter II Proposition 5.7]{Neuk}
 
 Soit $K$ une extension finie de $F$, soit $q=p^f$ le cardinal du corps résiduel de $K$ et soit 
  $\mu _{q-1}$  les racines $(q-1)-$ième de l'unité dans $K$, alors:
  
 \begin{enumerate}
 \item[($i$)] Si $K$ est de caractéristique $0$, on a des isomorphismes de groupes topologiques:
 \begin{center}
 $K^{\times} \simeq \Z \times \mf{o}_K ^{\times} \simeq \Z \times \mu _{q-1} \times 1+ \mf{p} _K \simeq \Z \times \Z / (q-1) \Z \times \Z /p^a \Z \times \zp ^d $
 \end{center}
 
 où $a\geq 0$ est un entier et $d=[K:\qp]$
 
 \item[($ii$)] Si $K$ est de caractéristique $p$, on a des isomorphismes de groupes topologiques:
 \begin{center}
 $K^{\times} \simeq   \Z \times \mf{o}_K ^{\times}\simeq \Z \times \mu _{q-1} \times 1+ \mf{p} _K  \simeq \Z \times  \Z /(q-1)\Z \times \zp ^{\N}$ \end{center}
 \end{enumerate}
 \end{prop}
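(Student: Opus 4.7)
La stratégie consiste à filtrer $K^\times$ par des sous-groupes remarquables et à identifier chaque quotient successif.

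\textbf{Étape 1: Scission par la valuation.} On fixe une uniformisante $\pi_K$ de $K$. Tout élément $x\in K^\times$ s'écrit de manière unique $x=\pi_K^{\nu_K(x)}u$ avec $u\in\mf{o}_K^\times$. Cela fournit directement l'isomorphisme de groupes topologiques $K^\times\simeq \Z\times \mf{o}_K^\times$, où $\Z$ est muni de la topologie discrète.

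\textbf{Étape 2: Relèvement de Teichmüller.} La réduction modulo $\mf{p}_K$ donne une suite exacte courte
\[
1\to 1+\mf{p}_K\to \mf{o}_K^\times \to k_K^\times\to 1.
\]
Comme $k_K^\times$ est cyclique d'ordre $q-1$ premier à $p$, le lemme de Hensel permet de relever les racines $(q-1)$-ièmes de l'unité dans $\mf{o}_K^\times$, fournissant une section multiplicative $\mu_{q-1}\hookrightarrow \mf{o}_K^\times$. Ce relèvement, couplé au fait que $1+\mf{p}_K$ est un pro-$p$ groupe (sans torsion première à $p$), donne la scission topologique $\mf{o}_K^\times \simeq \mu_{q-1}\times (1+\mf{p}_K)\simeq \Z/(q-1)\Z\times (1+\mf{p}_K)$.

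\textbf{Étape 3, cas ($i$): analyse de $1+\mf{p}_K$ via $\log$/$\exp$.} Pour $n$ assez grand (par exemple $n>e/(p-1)$, où $e$ est l'indice de ramification absolu), le logarithme $p$-adique définit un homéomorphisme $\zp$-linéaire $\log:1+\mf{p}_K^n\xrightarrow{\sim} \mf{p}_K^n$, d'inverse donné par $\exp$. Comme $\mf{p}_K^n$ est un $\zp$-module libre de rang $d=[K:\qp]$, on obtient $1+\mf{p}_K^n\simeq \zp^d$ topologiquement. Le quotient $(1+\mf{p}_K)/(1+\mf{p}_K^n)$ est un $p$-groupe fini, donc $1+\mf{p}_K$ est un $\zp$-module compact de type fini. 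Le théorème de structure des $\zp$-modules compacts de type fini donne alors $1+\mf{p}_K\simeq T\times \zp^d$, où la partie de torsion $T$ s'identifie au sous-groupe des racines $p$-primaires de l'unité contenues dans $K$, nécessairement cyclique d'ordre $p^a$ pour un certain $a\geq 0$.

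\textbf{Étape 4, cas ($ii$): analyse de $1+\mf{p}_K$ en caractéristique $p$.} Ici $K\simeq k_K((\pi_K))$. Le groupe $1+\mf{p}_K$ est un pro-$p$ groupe sans torsion: si $x\in\mf{p}_K\setminus\{0\}$, alors $(1+x)^{p^n}=1+x^{p^n}\neq 1$. On construit une base topologique dénombrable du $\zp$-module $1+\mf{p}_K$ en choisissant une base de $k_K$ sur $\fp$ et en considérant les éléments $1+c\pi_K^m$ avec $m$ premier à $p$; la structure de $1+\mf{p}_K$ comme $\zp$-module libre topologique de rang dénombrable en résulte, d'où $1+\mf{p}_K\simeq \zp^{\N}$.

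L'étape la plus délicate est l'étape 3: il faut vérifier que $\log$ converge et est bien un isomorphisme $\zp$-linéaire sur $1+\mf{p}_K^n$ pour $n$ convenable, puis appliquer le théorème de structure des $\zp$-modules compacts pour identifier précisément la partie de torsion $\Z/p^a\Z$ comme le groupe des racines $p$-primaires de l'unité dans $K$. Dans l'étape 4, la vérification que le système proposé est bien une base topologique (c'est-à-dire qu'il engendre densément et que l'application $\zp^{\N}\to 1+\mf{p}_K$ est un homéomorphisme pour les topologies adéquates) requiert un argument d'approximation par passage à la limite.
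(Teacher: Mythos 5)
Votre démonstration est correcte et suit exactement la preuve standard de la référence citée (Neukirch, II.5.7) : scission par la valuation, relèvement de Teichmüller via Hensel, puis analyse de $1+\mf{p}_K$ par $\log/\exp$ en caractéristique $0$ et par construction d'une base topologique en caractéristique $p$. Le texte ne donne d'ailleurs aucune preuve propre et renvoie simplement à cette référence, donc votre argument coïncide avec celui attendu.
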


\begin{coro} Soit $E$ une extension finie modérément ramifiée de $F$. Soit $\pi_F$ une uniformisante de $F$. Il existe une uniformisante $\pi _E$ de $E$ et une racine de l'unité $z$, d'ordre premier à $p$, tel que $\pi _ E ^e z = \pi _F$.
 
\end{coro}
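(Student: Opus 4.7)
Le plan est d'écrire $\pi_F$ comme $u \cdot (\pi_E')^e$ pour une uniformisante auxiliaire $\pi_E'$, de décomposer $u$ selon le produit donné par la proposition, puis d'absorber la partie principale dans une nouvelle uniformisante en utilisant la bijectivité du morphisme $e$-ième puissance sur le pro-$p$ groupe $1+\mathfrak{p}_E$ (possible précisément parce que $E/F$ est modérée, donc $\gcd(e,p)=1$).

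Concrètement, je fixerais d'abord une uniformisante quelconque $\pi_E'$ de $E$. Comme $\nu_E(\pi_F)=e(E\mid F)=e$, on a $\pi_F=u\cdot (\pi_E')^e$ pour un certain $u\in\mathfrak{o}_E^\times$. La proposition précédente fournit la décomposition $\mathfrak{o}_E^\times\simeq \mu_{q-1}\times(1+\mathfrak{p}_E)$, de sorte que $u=z\cdot v$ avec $z\in\mu_{q-1}$ (d'ordre premier à $p$) et $v\in 1+\mathfrak{p}_E$.

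Ensuite je montrerais que l'application $x\mapsto x^e$ est une bijection sur $1+\mathfrak{p}_E$. Selon la proposition, $1+\mathfrak{p}_E$ est isomorphe soit à $\Z/p^a\Z\times\zp^d$ (cas de caractéristique $0$), soit à $\zp^{\N}$ (cas de caractéristique $p$) : dans les deux cas, c'est un pro-$p$ groupe (abélien). Comme $E/F$ est modérément ramifiée, $e$ est premier à $p$, donc la multiplication par $e$ est inversible dans $\mathrm{End}(1+\mathfrak{p}_E)$, ce qui fournit un unique $w\in 1+\mathfrak{p}_E$ avec $w^e=v$.

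Je poserais alors $\pi_E:=w\,\pi_E'$ ; c'est bien une uniformisante de $E$ puisque $w\in\mathfrak{o}_E^\times$, et on calcule
\[
\pi_E^e\,z \;=\; w^e (\pi_E')^e\,z \;=\; v\,z\,(\pi_E')^e \;=\; u\,(\pi_E')^e \;=\; \pi_F,
\]
ce qui conclut. Le seul point délicat est la bijectivité du $e$-ième puissance sur $1+\mathfrak{p}_E$ ; il repose exclusivement sur la structure pro-$p$ donnée par la proposition et sur l'hypothèse $\gcd(e,p)=1$ (i.e. la modération de $E/F$), tout le reste étant du réagencement algébrique.
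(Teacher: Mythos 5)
Votre démonstration est correcte : le papier ne donne aucune preuve de ce corollaire, et votre argument (écrire $\pi_F=u(\pi_E')^e$, décomposer $u=zv$ via la proposition précédente, puis extraire une racine $e$-ième de $v$ dans le pro-$p$ groupe $1+\mathfrak{p}_E$ grâce à $\gcd(e,p)=1$) est exactement la déduction attendue à partir de la proposition de structure de $\mathfrak{o}_E^{\times}$. Rien à redire.
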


Soit $E/F$, $\pi_F$ et $\pi_E$ comme dans le corollaire précédent. On définit un sous groupe $C_E:=<\pi_E, \mu _{q-1} >$ de $E^{\times}$. Pour tout élément $c$ de $E^\times$ il existe un unique élément $sr(c) \in C_E$ tel que $c=sr(c)\times x $. L'élément $sr(c)$ est appelé représentant standard de $c$. L'élément $sr(c)$ est l'unique élément dans $C_E$ tel que $\nu _ E (sr(c)- c) >\nu _E (c) $.
  
 \begin{prop}\label{CE}\begin{enumerate} Soit \begin{small} \shorthandoff{;:!?} \xymatrix @!=0,01cm{E' \ar @{-}[d]\\ E\ar @{-}[d] \\ F } \end{small} une tour d'extensions finies modérément ramifiées de corps. Alors
 \item[($i$)]$C_E \subset C_{E'} $.
 \item[($ii$)]Si $E/F$ est une extension galoisienne, alors $C_E$ est stable par l'action du groupe de Galois $\mathrm{Gal}(E/F)$ sur $E$. De plus si $\sigma _ 1 , \sigma _2 \in \mathrm{Gal}(E/F)$ et $s\in C_E$ sont tel que $\sigma_1 (s) \not = \sigma _2 (s)$, alors $\nu_E (\sigma _ 1(s) - \sigma _2(s))= \nu _E (\sigma _1 (s))= \nu _E (s)$ 
   \end{enumerate}
  \end{prop}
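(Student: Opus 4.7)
The plan is to reduce everything to two standard facts about a finite tamely ramified extension $L/F$: the splitting $\mf{o}_L^\times \cong \mu_{q_L-1} \times (1+\mf{p}_L)$ with $1+\mf{p}_L$ a pro-$p$ group; and the injectivity of the Teichmüller section, which guarantees that distinct elements of $\mu_{q_L-1}$ have differences that are units of $\mf{o}_L$.

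For (i), I fix uniformizers supplied by the previous corollary: $\pi_E^{e(E/F)} z = \pi_F$ with $z \in \mu_{q-1}$ and $\pi_{E'}^{e(E'/F)} z' = \pi_F$ with $z' \in \mu_{q'-1}$. Since $k_E \subset k_{E'}$ gives $(q-1) \mid (q'-1)$, we have $\mu_{q-1} \subset \mu_{q'-1}$, so in particular $z'/z \in \mu_{q'-1}$. Setting $u := \pi_E/\pi_{E'}^{e(E'/E)} \in \mf{o}_{E'}^\times$, a direct computation using $e(E/F)\cdot e(E'/E)=e(E'/F)$ yields $u^{e(E/F)} = z'/z \in \mu_{q'-1}$. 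Writing $u = \zeta u_1$ with $\zeta \in \mu_{q'-1}$ and $u_1 \in 1+\mf{p}_{E'}$, this forces $u_1^{e(E/F)} \in \mu_{q'-1} \cap (1+\mf{p}_{E'}) = \{1\}$; since $1+\mf{p}_{E'}$ is pro-$p$ and $\gcd(e(E/F),p)=1$ by tameness, $u_1 = 1$. Hence $\pi_E \in C_{E'}$, which together with $\mu_{q-1} \subset C_{E'}$ yields $C_E \subset C_{E'}$.

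For (ii), the set $\mu_{q-1}$ is visibly $\Gal(E/F)$-stable as the $(q-1)$-th roots of unity in $E$. For $\pi_E$, I apply $\sigma \in \Gal(E/F)$ to $\pi_E^{e(E/F)} z = \pi_F$, obtaining $(\sigma(\pi_E)/\pi_E)^{e(E/F)} = z/\sigma(z) \in \mu_{q-1}$; the same decomposition-and-tameness argument, now inside $\mf{o}_E^\times$, forces $\sigma(\pi_E)/\pi_E \in \mu_{q-1}$, hence $\sigma(\pi_E) \in C_E$ and $\sigma(C_E) = C_E$. For the valuation identity, I write $s = \pi_E^n \omega$ with $\omega \in \mu_{q-1}$ and set $\eta_i := \sigma_i(\pi_E)/\pi_E \in \mu_{q-1}$; then $\sigma_i(s) = \pi_E^n w_i$ with $w_i := \eta_i^n \sigma_i(\omega) \in \mu_{q-1}$. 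The hypothesis $\sigma_1(s) \neq \sigma_2(s)$ gives $w_1 \neq w_2$, so $w_1 - w_2$ is a unit of $\mf{o}_E$ by Teichmüller injectivity, and thus $\nu_E(\sigma_1(s) - \sigma_2(s)) = n = \nu_E(s) = \nu_E(\sigma_i(s))$.

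The main obstacle is the decomposition-and-tameness step that identifies the units $\pi_E/\pi_{E'}^{e(E'/E)}$ in (i) and $\sigma(\pi_E)/\pi_E$ in (ii) with elements of the Teichmüller subgroup; once that structural input is secured, the rest is routine bookkeeping inside $C_E \cong \Z \times \mu_{q-1}$.
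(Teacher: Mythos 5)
Your proof is correct and rests on the same underlying facts as the paper's own argument, which merely declares ($i$) immediate and the Galois-stability simple, and derives the valuation identity from the uniqueness of the standard representative; your Teichmüller-injectivity step is exactly that uniqueness made explicit. The only difference is that you actually write out the tameness argument (prime-to-$p$ powers are bijective on the pro-$p$ group $1+\mathfrak{p}_{E'}$, so the unit $\pi_E/\pi_{E'}^{e(E'/E)}$, resp.\ $\sigma(\pi_E)/\pi_E$, lies in the Teichmüller subgroup) that the paper leaves unsaid.
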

 \begin{proof}
 ($i$) est immédiat. La première assertion de ($ii$) est simple, la deuxième résulte de la première et du fait que puisque $\sigma _1(s)$ est dans $C_E$ alors $sr(\sigma _1 (s))=\sigma _1 (s)$ donc $\nu_E (\sigma _ 1(s) - \sigma _2(s))= \nu _E (s)$.
 \end{proof}
  \begin{prop}\label{minigene}
Soit $E/F$ modérément ramifiée, soit $\beta\in E$, les assertions  suivantes sont équivalentes:

\begin{enumerate}

\item[($i$)] L'élément $\beta $ est minimal relativement à l'extension $E/F$.

\item[($ii$)] Le représentant standard de $\beta$  engendre l'extension $E/F$, i.e. $F[sr(\beta)]=E$.

 \end{enumerate}
 
 \end{prop}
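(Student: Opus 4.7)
Posons $\alpha = sr(\beta)$, $\nu = \nu_E(\beta)$, et notons $e = e(E\mid F)$ et $f = f(E\mid F)$. Par définition du représentant standard, $\nu_E(\beta - \alpha) > \nu$, d'où $\nu_E(\alpha) = \nu$ et $\pi_F^{-\nu}\beta^{e} \equiv \pi_F^{-\nu}\alpha^{e} \pmod{\mf{p}_E}$ (car $\beta^{e} - \alpha^{e}$ est de valuation strictement supérieure à $\nu e$ dans $E$). On en déduit que les deux conditions
\begin{enumerate}
\item[(b)] $\mathrm{pgcd}(\nu, e) = 1$,
\item[(c)] la classe résiduelle de $\pi_F^{-\nu}\beta^{e}$ dans $k_E$ engendre $k_E$ sur $k_F$,
\end{enumerate}
sont équivalentes pour $\beta$ et pour $\alpha$. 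D'après le corollaire précédant la proposition, on peut écrire $\alpha = \pi_E^{\nu} \zeta$ avec $\pi_E$ uniformisante de $E$ et $\zeta \in \mu_{q-1}$, et il existe $z \in \mu_{q-1}$ vérifiant $\pi_E^{e} z = \pi_F$.

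Le second ingrédient est le lemme suivant: tout $\gamma \in E$ satisfaisant (b) et (c) engendre $E$ sur $F$. En effet, en posant $E' = F[\gamma]$, la condition (b) impose $e(E\mid E') = 1$ (puisque $e(E\mid E')$ divise à la fois $\nu_E(\gamma)$ et $e$); puis (c) impose $f(E\mid E') = 1$ (l'élément $\pi_F^{-\nu}\gamma^{e}$ appartient à $E'$, donc sa classe résiduelle est dans $k_{E'} \subset k_E$, et engendrant $k_E$ sur $k_F$ elle force $k_{E'} = k_E$). Combinés, ces deux points fournissent directement (i) $\Rightarrow$ (ii): la minimalité de $\beta$ entraîne (b) et (c) pour $\beta$, donc pour $\alpha$, donc $F[\alpha] = E$ par le lemme.

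Pour la réciproque (ii) $\Rightarrow$ (i), il reste à montrer que $F[\alpha] = E$ entraîne (b) et (c) pour $\alpha$; les conditions correspondantes pour $\beta$ suivront, puis le lemme appliqué à $\beta$ donnera $F[\beta] = E$, complétant la minimalité. Posons $d = \mathrm{pgcd}(\nu, e)$, et écrivons $\nu = d\nu'$, $e = de'$ avec $\mathrm{pgcd}(\nu', e') = 1$. Un calcul direct donne $\alpha^{e'} = \pi_F^{\nu'}\eta'$ où $\eta' := z^{-\nu'}\zeta^{e'} \in \mu_{q-1}$. Comme $\pi_F^{\nu'} \in F^{\times}$, on a $\eta' \in F[\alpha]$, et $F[\eta']/F$ est non ramifiée de degré $f_0' := [k_F[\overline{\eta'}]:k_F]$. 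L'élément $\alpha$ annule sur $F[\eta']$ le polynôme $P(X) := X^{e'} - \pi_F^{\nu'}\eta'$; un argument de polygone de Newton, valide grâce à $\mathrm{pgcd}(\nu', e') = 1$ et à la séparabilité garantie par la ramification modérée ($p \nmid e'$), entraîne l'irréductibilité de $P$ sur $F[\eta']$: toute racine de $P$ dans $\bar F$ a valuation $\nu'/e'$, dont le dénominateur $e'$ (en forme irréductible) force toute extension contenant une racine à avoir indice de ramification au moins $e'$ sur $F[\eta']$, ce qui combiné à $\deg P = e'$ donne l'irréductibilité. On obtient $[F[\alpha]:F[\eta']] = e'$, puis $[F[\alpha]:F] = e' f_0'$. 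L'hypothèse $F[\alpha] = E$ se récrit $e' f_0' = ef = d e' f$, soit $f_0' = d f$; mais $f_0' \leq f$, donc $d = 1$ et $f_0' = f$, fournissant exactement les conditions (b) et (c) pour $\alpha$.

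L'obstacle technique principal sera l'irréductibilité du polynôme $X^{e'} - \pi_F^{\nu'}\eta'$ sur $F[\eta']$ via le polygone de Newton; celle-ci exploite simultanément l'hypothèse de ramification modérée et la structure multiplicative particulière du représentant standard $\alpha = \pi_E^{\nu}\zeta$, qui permet d'exprimer $\alpha^{e'}$ comme produit d'une puissance de $\pi_F$ et d'une racine de l'unité d'ordre premier à $p$.
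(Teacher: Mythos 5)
Votre démonstration est correcte. La direction ($i$) $\Rightarrow$ ($ii$) suit essentiellement la même route que le texte : votre lemme (tout $\gamma$ vérifiant (b) et (c) engendre $E$) est la reformulation, via $e(E\mid F[\gamma])=f(E\mid F[\gamma])=1$, de l'argument du texte qui montre $E'\subset F[sr(\beta)]$ puis $E=E'[sr(\beta)]$ à l'aide de l'uniformisante $sr(\beta)^a\pi_F^b$. En revanche, pour ($ii$) $\Rightarrow$ ($i$) votre mécanisme est réellement différent : le texte majore $[E:E']$ par $e/d$ en observant que $sr(\beta)^{e/d}\in E'$, ce qui force $d=1$, puis traite séparément la génération du corps résiduel ; vous calculez exactement $[F[\alpha]:F]=e'f_0'$ en établissant l'irréductibilité de $X^{e'}-\pi_F^{\nu'}\eta'$ sur $F[\eta']$ par polygone de Newton (licite car $p\nmid e'$ et $\mathrm{pgcd}(\nu',e')=1$), et la comparaison $e'f_0'=ef$ livre (b) et (c) d'un seul coup. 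Votre version a deux mérites : elle est autonome (pas d'appel implicite à la structure de $E'$ comme sous-corps engendré par les racines de l'unité), et elle évite l'inclusion $F[sr(\beta)]\subset F[\beta]$ utilisée telle quelle dans le texte — inclusion qui demande de savoir que $sr(\beta)\in F[\beta]$, point non justifié ; vous obtenez $F[\beta]=E$ proprement en appliquant votre lemme à $\beta$ après transfert des conditions (b) et (c) de $\alpha$ à $\beta$. Le prix à payer est l'argument d'irréductibilité, standard mais à rédiger ; le reste (congruence $\pi_F^{-\nu}\beta^e\equiv\pi_F^{-\nu}\alpha^e \pmod{\mf{p}_E}$, identité $\alpha^{e'}=\pi_F^{\nu'}\eta'$) est exact.
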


\begin{proof} $\widetilde{BKH}$

($i$) implique ($ii$)

Posons $\nu=-\nu_E(\beta)$, $ e =e(E,F)$.

Notons $E'$ l'extension non ramifiée maximale contenue dans $E$.

Il suffit de montrer que $E'\subset F[sr(\beta)]$ et $E\subset E'[sr(\beta)]$\\

D'une part, par définition, $\nu_E(sr(\pi_F^{-\nu}\beta^e)-\pi_F^{-\nu}\beta ^e)>0$,et donc $sr(\pi_F^{-\nu}\beta^e)+\mathfrak{p}_E=\pi_F^{-\nu}\beta ^e+\mathfrak{p}_E.$ D'autre part, $sr(\pi_F^{-\nu}\beta ^e)=\pi_F^{-\nu}sr(\beta)^e.$
 
 On déduit que $\pi_F^{-\nu}sr(\beta)^e +\mathfrak{p}_E$ engendre $k_E / k_F$.
 
 Donc $\pi_F^{-\nu}sr(\beta)^e$ génère $E'$. 
 
 Donc $E'\subset F[sr(\beta)]$.\\
 
$\nu_E(\beta)=\nu_E(sr(\beta)),$ donc $\mathrm{pgcd}(\nu_E(sr(\beta)),e)=1.$
 
 Soient $a,b$ entiers vérifiant $a\nu_E(sr(\beta))+be=1$.
 
 $\nu_E(sr(\beta)^a \pi_F ^b)=1$ donc  $E'[sr(\beta)^a \pi_F ^b]=E$ \footnote{Une extension finie totalement ramifiée  est générée par une uniformisante quelconque.}
 
 Donc $E'[sr(\beta)]=E$.
 
Cela conclut ($i$) implique ($ii$).

($ii$) implique ($i$)

-$E=F[sr(\beta)]\subset F[\beta]\subset E$ donc $E=F[\beta]$.

Posons $\nu= \nu _E ( sr(\beta)) =\nu_E(\beta)$ et $e=e(E,F)$

\cite{wald}$E'$ est engendrée sur $F$ par les racines de l'unité d'ordre premier à $p$ contenues dans $E$. Soient $d=pgcd(\nu ,e)$ et $b=e/d$. On voit que $sr(\beta)^b\in E'$. Comme $sr(\beta)$ engendre $E$ sur $F$ et donc sur $E'$ on en déduit $[E:E']
\leq b$. Or $[E:E']=e$ donc $b=e$ et $\nu$ est premier à $e$.

De plus $sr(\beta)$ engendre $E'$ sur $F$, ou encore $\pi_F ^{-\nu} sr(\beta)^e$ engendre $E'$ sur $E$. Or ce terme est une racine de l'unité d'ordre premier à $p$ donc engendre $E'$ si et seulement si sa réduction engendre $k_{E'}=k_E$.

Or $\pi_F^{-\nu} sr(\beta)^e +\mathfrak{p}_E = \pi_F ^{-\nu} \beta ^{e} +\mathfrak{p}_E$. Donc $\pi_F ^{-\nu} \beta ^{e} +\mathfrak{p}_E$ engendre $k_E$.

Cela conclut ($ii$) implique ($i$).

 \end{proof}
 
 \section{Elément générique et caractère générique au sens de  Yu\cite[§8,9]{Yu}}\label{egcg}

 Soit $G'\subset G$ une suite de Levi tordue modérée.
 
 On note $Z'$ le centre de $G'$, soit $T$ un tore maximal de $G$. On plonge $\mathrm{Lie} ^* (Z')^{\circ}$  canoniquement dans $\mathrm{Lie}^*(G')$. On plonge aussi  $\mathrm{Lie}^*(G')$ dans $\mathrm{Lie}^*(G)$.
 
 Un élément $X^* \in \mathrm{Lie}^*(Z')^{\circ}$ est dit $G$-générique de profondeur $\mathbf{r}$ si les conditions suivantes sont satisfaites:
 \begin{enumerate}
 
 \item[(GE1)] : $\mathrm{ord}(X^*(H_a))=-\mathbf{r}$ pour tout $a\in \phi(G,T,\bar{F}) \backslash \phi(G',T,\bar{F}).$

 \item[(GE2)] : $Z_{W}(\tilde{X}^*)=W(\phi(G',T,\bar{F}))$ où:
 
 \end{enumerate}
On a un isomorphisme $\mathrm{Lie}^*(T) \otimes _{\Z}\bar{F} \simeq X^*(T) \otimes _{\Z} \bar{F}$, soit $w_\mathbf{r}$ tel que $w_{\mathbf{r}} X^*$ soit de profondeur $0$.

On identifie $w_{\mathbf{r}} X^*$ et son image, via l'isomorphisme ci dessus, qui appartient à $X^*(T) \otimes _{\Z} \mf{o}_{\bar{F}}$.
 
L'élément $\tilde{X}^*$ est alors  réduction modulo $X^*(T) \otimes _{\Z} \mf{p}_{\bar{F}}$ de $w_r X^*$.

  $Z_W(\tilde{X}^*)$ désigne le stabilisateur dans $W=W(\phi(G,T,\bar{F}))$ de $\tilde{X}^*$ par l'action de $W$ sur $X^*(T) \otimes _{\Z} \mf{o}_{\bar{k}}$.\\

 Un caractère $\phi$ de $G'(F)$ est dit $G-$générique (relativement à $y$ ) de profondeur $\mathbf{r}$ si $\phi $ est trivial sur $G'(F)_{y, \mathbf{r}+}$, non trivial sur $G'(F)_{y,\mathbf{r}}$ et $\phi $ restreint à $G'(F)_{y,\mathbf{r}:\mathbf{r}+}$ est réalisé par un élément $X^*\in (\mathrm{Lie}^*(Z')^{\circ})_{-\mathbf{r}} \subset (\mathrm{Lie}^*(G')_{y,-\mathbf{r}}$ qui est $G$-générique de profondeur $\mathbf{r}$.
 
 \section{Liens entre caractères simples modérés et caractères génériques}\label{cscg}
 
Soit $[\A,n,0,\beta]$ une strate simple dans l'algèbre $A=\mathrm{End}_F(V)$ tel que $F[   \beta]$ est une extension modérément ramifiée de $F$.
D'après la proposition \ref{approxmodéré2} on en déduit \cite[2.4.2]{BK} qu'on peut trouver une suite d'approximation  $[\A,n,r_i,\beta_i]$, $0 \leq i \leq s $ tel que :
\begin{enumerate}

\item[($vii$)]$F[\beta _{i+1}]\subset F[\beta _i ] $ pour $0\leq i \leq s-1$ ($\beta _0 =\beta $).

\item[($vi'$)] La strate $[\A _{i+1}, r_{i+1},r_{i+1}-1,\beta_i - \beta _{i+1} ]$ est simple pour $0\leq i \leq s-1$.
\end{enumerate}
   Où $\A _i$ est défini par $\A_i = \A \cap \mathrm{End}_{F[\beta _i]}(V)$.\\

   Posons alors:
   \begin{enumerate}
   
   \item[$\bullet$] $c_i=\beta _ i - \beta _{i+1} $ si $0\leq i \leq s-1 $
   
  \item[$\bullet$] $c_s = \beta _s$
   
   \item[$\bullet$]$E_i=F[\beta _i]$ pour $0\leq i \leq s$ et $E_d=F$.
   
   \item[$\bullet$] $A_i=\mathrm{End}_{E_i}(V)$, $\A _ i = A_i \cap \A$ et $\p _ i = \p  \cap \A _i$.
   \item[$\bullet$] $N_i = \mathrm{dim}_{E_i}(V)=\frac{N}{[E_i : F]}$ et $D_i= [E_i : F]= \frac{N}{N_i}$
   \end{enumerate}
    \begin{prop} On a une égalité de corps $E_i=E_{i+1}[c_i]$ pour $0\leq i \leq s$, de plus $c_i$ est minimal sur $E_{i+1}$.
   
   \end{prop}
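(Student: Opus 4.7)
Je prouverai séparément les deux affirmations de la proposition: l'égalité $E_i = E_{i+1}[c_i]$ d'une part, et la minimalité de $c_i$ sur $E_{i+1}$ d'autre part.

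Pour l'égalité des corps, je procéderai par calcul direct à partir de la définition de $c_i$. Pour $0\leq i \leq s-1$, on a $c_i = \beta_i - \beta_{i+1}$; comme $\beta_{i+1}\in E_{i+1}$, il vient immédiatement $E_{i+1}[c_i]=E_{i+1}[\beta_i]$. La propriété $(vii)$ garantit $E_{i+1}\subset E_i=F[\beta_i]$, et donc $E_{i+1}[\beta_i]=F[\beta_{i+1},\beta_i]=F[\beta_i]=E_i$. Dans le cas $i=s$, la convention $E_{s+1}=F$ et l'égalité $c_s=\beta_s$ donnent directement $E_{s+1}[c_s]=F[\beta_s]=E_s$.

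Pour la minimalité de $c_i$ sur $E_{i+1}$, le plan est d'appliquer la proposition \ref{minider} à la strate $[\A_{i+1}, r_{i+1}, r_{i+1}-1, c_i]$ vue dans l'algèbre $A_{i+1}=\mathrm{End}_{E_{i+1}}(V)$ sur le corps de base $E_{i+1}$. Par la propriété $(vi')$, cette strate est simple pour $0\leq i\leq s-1$. Dans le cas $i=s$, en posant $\A_{s+1}=\A$ et $r_{s+1}=n$, la strate $[\A,n,n-1,\beta_s]$ est simple grâce à la propriété $(v)$ qui assure $k_0(\beta_s,\A)=-n$. Comme cette strate est de la forme $[\A,n,n-1,\beta]$, la proposition \ref{minider}, appliquée avec $E_{i+1}$ comme corps de base, fournit l'équivalence voulue et donne la minimalité de $c_i$ sur $E_{i+1}$.

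L'étape la plus délicate consiste à vérifier que les hypothèses de pureté de la proposition \ref{minider} sont effectivement réalisées pour les strates $[\A_{i+1}, r_{i+1}, r_{i+1}-1, c_i]$: il s'agit essentiellement de contrôler l'inclusion $E_{i+1}[c_i]^\times\subset\mf{K}(\A_{i+1})$, qui résulte de l'égalité $E_{i+1}[c_i]=E_i$ établie au paragraphe précédent, de l'identification $\mf{K}(\A_{i+1})=\mf{K}(\A)\cap A_{i+1}^\times$, et de l'inclusion $E_i^\times\subset\mf{K}(\A)$ qui découle de la pureté de la strate initiale $[\A,n,0,\beta]$. L'égalité $\nu_{\A_{i+1}}(c_i)=-r_{i+1}$ est, quant à elle, incluse dans la simplicité des strates dérivées garantie par $(vi')$ et $(v)$.
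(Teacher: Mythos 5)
Votre démonstration est correcte et suit essentiellement la même démarche que le texte, dont la preuve se réduit à l'observation que l'énoncé découle de ($vi'$) et de la proposition \ref{minider} ; vous ne faites qu'en expliciter les détails (l'égalité des corps via ($vii$) et la définition de $c_i$, la minimalité en appliquant \ref{minider} sur le corps de base $E_{i+1}$, la pureté étant déjà contenue dans la simplicité affirmée par ($vi'$)). Seule réserve mineure : pour $i=s$, la propriété ($v$) autorise aussi $k_0(\beta_s,\A)=-\infty$, c'est-à-dire $\beta_s\in F$, cas dégénéré où la minimalité de $c_s$ sur $E_{s+1}=F$ se vérifie directement à partir de la définition plutôt que par \ref{minider}.
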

   \begin{proof}Cela découle de ($vi'$) (cf \ref{approxmodéré2}) et \ref{minider}.
   \end{proof}

    Distinguons deux cas: 
    (Cas A) \underline{$\beta_s $ appartient à $F$}
   et (Cas B) \underline{ $\beta _s $ n'appartient pas à $F$}.

   \begin{enumerate}
   \item[(Cas A)]

   \underline{Si $\beta_s $ appartient à $F$} on pose $d=s$.\\
   
   On a dans ce cas $H^1(\beta , \A ) = (1+\p _0 ^1) (1+\p _1 ^{[\frac{r_1}{2}]+1})\cdots (1+ \p ^{[\frac{r_d}{2}]+1}).$\\
   
   Posons $G^i = \mathrm{Res} _{E_i /F} \mathrm{Aut}_{E_i} (V)$ pour $0 \leq i \leq d=s$,  ce sont des schémas en groupes réductifs.\\

    On a des plongements naturels $G^0 \hookrightarrow G^1 \hookrightarrow \cdots \hookrightarrow G^i \hookrightarrow \cdots \hookrightarrow G^d $ de groupes algébriques.

   \begin{prop} La suite $G^0 \hookrightarrow G^1 \hookrightarrow \cdots \hookrightarrow G^i \hookrightarrow \cdots \hookrightarrow G^d $  est une suite de Levi tordue modérée tel que $Z(G^0)/Z(G)$ est anisotrope.

\end{prop}

\begin{proof} $G^i \simeq \mathrm{Res} _{E_i /F } \mathrm{GL}_{N_i}$, $E_i$ est une extension modérément ramifiée de $F$, soit $\bar{E_i} ^{Gal} $ la clôture galoisienne de $E_i$, alors $\bar{E_i} ^{Gal} $ est une extension modérément ramifiée de $F$.
Soit $P$ le polynôme minimal de $\beta _i$ sur $F$, on a alors \begin{center} $E_i \otimes _ F \bar{E_i} ^{Gal} \simeq F[\beta _i] \otimes _F \bar{E_i} ^{Gal} \simeq F[X]/P(X) \otimes _F \bar{E_i} ^{Gal} \simeq \bar{E_i} ^{Gal} [X]/P(X) \simeq (\bar{E_i} ^{Gal})^{[E_i :F]}$.\end{center}

Donc $G^i(\bar{E_i} ^{Gal})\simeq \mathrm{GL}_{N_i} ( E_i \otimes _F \bar{E_i} ^{Gal} ) \simeq \mathrm{GL}_{N_i} ( (\bar{E_i} ^{Gal}) ^{[E_i :F]})\simeq (\mathrm{GL}_{N_i} ( \bar{E_i} ^{Gal}) )^{[E_i :F]}$.\\

 Ainsi $G^i \otimes \bar{E_i} ^{Gal} $ est un Levi de $G\otimes \bar{E_i} ^{Gal}$.\\

Puisque $Z(G^0)(F) /Z (G) (F) \simeq E_0 ^{\times} / F^{\times} $ est compact,  $Z(G^0)/Z(G)$ est anisotrope.

\end{proof}

 \begin{prop}\label{filtA} Il existe des applications $\mathcal{B}(G^0,F) \hookrightarrow  \cdots \hookrightarrow \mathcal{B}(G^i,F) \hookrightarrow \cdots \mathcal{B}(G^d,F)$, des\footnote{On a mis en gras les nombres réels $\mathbf{r}$ " du coté Moy-Prasad" pour les différencier de ceux " du coté Bushnell-Butzko".} nombres réels $\mathbf{r}_i$, et un point $y\in \mathcal{B}(G^0 , F)$ tel que pour tout $1\leq i \leq d $:
 
 \begin{enumerate}
 \item[$\bullet$] $ 1+ \p _i ^{[\frac{r_i}{2}]+1} = G^i(F)_{y, \frac{\mathbf{r}_{i-1}}{2}+}$
 \item[$\bullet$] $ 1+ \p _i ^{[\frac{r_i+1}{2}]} = G^i(F)_{y, \frac{\mathbf{r}_{i-1}}{2}}$
 \item[$\bullet$] $ 1+ \p _i ^{r_i+1} = G^i(F)_{y, \mathbf{r}_{i-1} +}$
 \item[$\bullet$] $ 1+ \p _i ^{r_i} = G^i(F)_{y, \mathbf{r}_{i-1}} $
 \item[$\bullet$] $  \p _i ^{[\frac{r_i}{2}]+1} = \mathfrak{g}^i(F)_{y, \frac{\mathbf{r}_{i-1}}{2}+}$
 \item[$\bullet$] $  \p _i ^{[\frac{r_i+1}{2}]} = \mathfrak{g}^i(F)_{y, \frac{\mathbf{r}_{i-1}}{2}}$
 \item[$\bullet$] $  \p _i ^{r_i+1} = \mathfrak{g}^i(F)_{y, \mathbf{r}_{i-1} +}$
 \item[$\bullet$] $  \p _i ^{r_i} = \mathfrak{g}^i(F)_{y, \mathbf{r}_{i-1}} $ 
 \end{enumerate}

\end{prop}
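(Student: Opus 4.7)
Mon plan est d'identifier chaque $\mathcal{B}(G^i,F)$ au bâtiment $\mathcal{B}(\GL_{N_i},E_i)$ via la restriction de Weil, de choisir $y$ via le dictionnaire chaînes de réseaux / ordres héréditaires, puis de vérifier les huit égalités par un calcul direct avec la formule explicite de Moy-Prasad pour $\GL$.

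Première étape : puisque $G^i=\Res_{E_i/F}\GL_{N_i}$ et que $E_i/F$ est modérément ramifiée, il y a une identification canonique $\mathcal{B}(G^i,F)\simeq \mathcal{B}(\GL_{N_i},E_i)$, et ce dernier bâtiment est paramétré par les classes d'homothétie de chaînes de $\mf{o}_{E_i}$-réseaux de $V$. La tour de corps $F=E_d\subset E_{d-1}\subset\cdots\subset E_0$ et les inclusions $\mf{o}_{E_d}\subset\cdots\subset\mf{o}_{E_0}$ fournissent des plongements compatibles $\mathcal{B}(G^0,F)\hookrightarrow\cdots\hookrightarrow\mathcal{B}(G^d,F)$, en remarquant qu'une chaîne de $\mf{o}_{E_{i-1}}$-réseaux se voit, après raffinement éventuel, comme une chaîne de $\mf{o}_{E_i}$-réseaux.

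Deuxième étape : puisque $E_i^{\times}\subset\mf{K}(\A)$ (pureté des strates intermédiaires), chaque $\A_i=\A\cap A_i$ est un $\mf{o}_{E_i}$-ordre héréditaire dans $A_i$, de période notée $e_i=e(\A_i\mid\mf{o}_{E_i})$. Par le dictionnaire standard, $\A_i$ correspond à une facette de $\mathcal{B}(G^i,F)$ ; je définis alors $y$ comme le barycentre de la facette de $\mathcal{B}(G^0,F)$ associée à $\A_0=\A$. Sous les plongements de la première étape, $y$ s'envoie, dans $\mathcal{B}(G^i,F)$, exactement sur le barycentre de la facette associée à $\A_i=\A\cap A_i$, car la construction par intersection reflète précisément le passage de la chaîne de $\mf{o}_{E_0}$-réseaux à la chaîne de $\mf{o}_{E_i}$-réseaux.

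Troisième étape : en ce barycentre, la formule de Moy-Prasad pour $\GL_{N_i}(E_i)$ prend la forme explicite
\begin{center}
$G^i(F)_{y,\mathbf{r}}=1+\p_i^{\lceil \mathbf{r} e_i\rceil}$, $\quad G^i(F)_{y,\mathbf{r}+}=1+\p_i^{\lfloor \mathbf{r} e_i\rfloor+1}$,
\end{center}
avec les énoncés analogues sans le $1+$ pour $\mf{g}^i(F)_{y,\mathbf{r}}$ et $\mf{g}^i(F)_{y,\mathbf{r}+}$. Je pose alors
\begin{center}
$\mathbf{r}_{i-1}:=r_i/e_i$.
\end{center}
Les huit égalités de l'énoncé se ramènent aux identités élémentaires $\lceil r_i/2\rceil=[(r_i+1)/2]$ et $\lfloor r_i/2\rfloor+1=[r_i/2]+1$ valables pour $r_i$ entier : on applique la formule ci-dessus avec $\mathbf{r}=\mathbf{r}_{i-1}$ puis $\mathbf{r}=\mathbf{r}_{i-1}/2$, et on obtient respectivement les quatre égalités de type $\mathbf{r}_{i-1}$ et les quatre égalités de type $\mathbf{r}_{i-1}/2$.

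Le point délicat sera la première étape, à savoir la fonctorialité (à la Landvogt/Prasad-Yu) des plongements $\mathcal{B}(G^i,F)\hookrightarrow\mathcal{B}(G^{i+1},F)$ compatible avec le choix du point commun $y$ ; dans le cas présent (restrictions de Weil de $\GL$ le long d'une tour d'extensions modérées), cela se lit élégamment via les chaînes de réseaux, mais il faut vérifier proprement que l'opération $\A\mapsto \A\cap A_i$ correspond bien, sur les bâtiments, à l'image de la facette de $\A$ par le plongement choisi. Une fois ce point acquis, la troisième étape et le calcul final sont purement formels.
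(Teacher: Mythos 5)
Votre stratégie --- identifier $\mathcal{B}(G^i,F)$ à $\mathcal{B}(\GL_{N_i},E_i)$, décrire ce dernier par les chaînes de $\mf{o}_{E_i}$-réseaux, prendre pour $y$ le barycentre de la facette associée à $\A$, puis conclure par la formule explicite de Moy--Prasad --- est exactement le contenu des références que le texte invoque pour cette proposition (l'appendice de Broussous et \cite[2.4]{PY}) ; le papier ne donne d'ailleurs aucune autre preuve qu'un renvoi à ces travaux, y compris pour le point que vous signalez vous-même comme délicat (la compatibilité entre l'opération $\A\mapsto\A\cap A_i$ et les plongements d'immeubles à la Prasad--Yu).

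Il y a cependant une erreur de normalisation dans votre troisième étape. La filtration $G^i(F)_{y,\mathbf{r}}$ de l'énoncé est celle de Yu pour le $F$-groupe $G^i=\Res_{E_i/F}\GL_{N_i}$ : elle est indexée par la valuation $\mathrm{ord}$ prolongeant $\nu_F$, et non par la valuation normalisée de $E_i$. Pour le tore induit $\Res_{E_i/F}\mathbb{G}_m$ on a par exemple $T(F)_{\mathbf{r}}=1+\mathfrak{p}_{E_i}^{\lceil \mathbf{r}\, e(E_i\mid F)\rceil}$, et au barycentre de la facette de $\A_i$ la formule correcte est $G^i(F)_{y,\mathbf{r}}=1+\p_i^{\lceil \mathbf{r}e\rceil}$ avec $e=e(\A\mid\mathfrak{o}_F)=e_i\, e(E_i\mid F)$, le même $e$ pour tout $i$. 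Il faut donc poser $\mathbf{r}_{i-1}=r_i/e$ (c'est ce que fait le texte juste après la proposition) et non $r_i/e_i$ : votre valeur en diffère du facteur $e(E_i\mid F)$. Ce n'est pas anodin pour la suite : avec votre normalisation on n'aurait plus $\mathbf{r}_i=-\mathrm{ord}(c_i)$, relation utilisée pour établir la généricité de profondeur $\mathbf{r}_i$ dans la proposition \ref{cage}, et les profondeurs des différents $G^i$ ne seraient plus mesurées sur une échelle commune, ce qui est indispensable à la donnée de Yu. L'arithmétique finale ($\lceil r_i/2\rceil=[(r_i+1)/2]$, etc.) reste valable une fois $e_i$ remplacé par $e$.
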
 

\begin{proof}
   
   C'est l'objet de \cite{Brou}, de l'appendice A de \cite{Brou}, voir aussi \cite[2.4]{PY}    \end{proof}

   On a $\mathbf{r}_i =- \mathrm{ord}(c_i)$ pour $0\leq i \leq d-1$. En effet, posons $e=e(\A \mid\mathfrak{o}_F) ,$
   
    alors  $e \mathbf{r}_{i-1}= r_i  $ d'après \cite[2.4]{PY} pour $1\leq i \leq d $
    
     et $r_{i+1}=-\nu_{\A _{i+1}}(c_i)=-\nu_{\A }(c_i)=-\frac{\nu _{E_i} (c_i) e(\A \mid \mathfrak{o}_F)}{e(E_i \mid F)}=-\mathrm{ord}(c_i)e(\A \mid \mathfrak{o} _F)=-\mathrm{ord}(c_i)e $ pour $0\leq i \leq d-1$.\\
   
   Posons $\mathbf{r}_d = -\mathrm{ord}(\beta _s).$

   \begin{prop} \label{factA} Soit $\theta \in \mathcal{C}(\A,m,\beta)$, posons $m_i=\mathrm{max}\{m,[\frac{r_i}{2}]\}$ pour $0\leq i \leq s=d$ il existe $\phi_0,\ldots,\phi_s$  des caractères de $E_0 ^{\times},\ldots,E_s ^{\times}$ tel que:
   
   \begin{center}
   
  $ \theta= \prod\limits_{i=0} ^{(s=d)} \theta ^i $
   
   \end{center}
   
   où $\theta_i$ est défini par les $2$ conditions:
   
 $\bullet$  $\theta ^i \mid _{(1+\p _0^{m_0+1})\ldots (1+ \p _i ^{m_i+1})}(1+x)=\phi _i \circ \mathrm{det}_{A_i}(1+x)$

  $\bullet$ $\theta  ^i \mid_{(1+ \p_{i+1}^{m_{i+1}+1})\cdots (1+ \p ^{m_d +1 })} (1+x) =\psi \circ \mathrm{tr}_{A/F}(c_i x)$ , si $i \leq d-1$.

   \end{prop}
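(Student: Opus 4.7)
The plan is to argue by induction on $d$, the length of the approximation sequence, with the inductive step driven by the recursive definition of simple characters.

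\textbf{First step}, I would establish the factorization
\[
H^{m+1}(\beta, \A) = (1 + \p_0^{m_0+1})(1 + \p_1^{m_1+1}) \cdots (1 + \p_d^{m_d+1}).
\]
This unfolds the recursive formula $\mathfrak{H}(\beta_j, \A) = \A_j + \mathfrak{H}(\beta_{j+1}, \A) \cap \p^{[r_{j+1}/2]+1}$: intersecting with $\p^{m+1}$ and using the inclusions $\A_0 \subset \A_1 \subset \cdots \subset \A_d$ (which follow from $E_0 \supset E_1 \supset \cdots \supset E_d$), one shows by downward induction on $j$ that $\mathfrak{H}(\beta_j, \A) \cap \p^{m+1} = \sum_{i=j}^d \p_i^{m_i+1}$. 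The multiplicative form follows from standard commutator estimates in compact pro-$p$ groups.

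\textbf{Second step}, I would unwind $\theta$ along the approximation sequence. Condition (c) of the definition of a simple character, applied at the top level, yields
\[
\theta \vert_{H^{m_1+1}(\beta_0)} = \theta' \cdot \psi_{c_0}, \qquad \theta' \in \mathcal{C}(\A, m_1, \beta_1),
\]
with $c_0 = \beta_0 - \beta_1$, using that $H^{m_1+1}(\beta_0) = H^{m_1+1}(\beta_1)$ because $m_1 \ge [r_1/2]$. The induction hypothesis applied to $\theta'$ with the shorter sequence $(\beta_1, \ldots, \beta_d)$ provides characters $\phi_1, \ldots, \phi_d$ of $E_1^\times, \ldots, E_d^\times$ and factors $\theta^1, \ldots, \theta^d$, defined a priori on the subgroup $H^{m_1+1}(\beta_0)$.

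\textbf{Third step}, condition (a) applied to $\theta$ on $1 + \p_0^{m_0+1} \subset A_0^\times$ provides the missing character $\phi_0$ (extended from a subgroup of $E_0^\times$ to all of $E_0^\times$). I would define $\theta^0$ so as to equal $\phi_0 \circ \det_{A_0}$ on $1 + \p_0^{m_0+1}$ and $\psi_{c_0}$ on $H^{m_1+1}(\beta_0)$; each $\theta^i$ for $i \ge 1$ is extended to the full $H^{m+1}(\beta_0)$ by setting it equal to $\phi_i \circ \det_{A_i}$ on the extra factor $1 + \p_0^{m_0+1}$ (which lies in $A_i^\times$ for all $i \ge 0$). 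The product formula $\theta = \prod_{i=0}^d \theta^i$ on $H^{m+1}(\beta, \A)$ then follows from the decomposition of Step~1, checked factor by factor.

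\textbf{Main obstacle}. The delicate point is the compatibility of the two defining conditions for each $\theta^i$ on the intersection of their two domains. This reduces to the standard approximation $\det_{A_i}(1+x) \equiv 1 + \tr_{A_i/E_i}(x)$ modulo higher order, combined with the transitivity $\tr_{A/F} = \tr_{E_i/F} \circ \tr_{A_i/E_i}$ and the fact that $c_i \in E_i$ is central in $A_i$. Furthermore, the extensions $\phi_i$ of the partial characters to $E_i^\times$ must be chosen coherently across all levels so that the product of the $\theta^i$'s reconstructs $\theta$ precisely, not merely up to a character of the abelianization of $H^{m+1}(\beta_0)$.
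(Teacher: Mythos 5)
Your strategy is the same as the paper's: induction on the length of the approximation sequence, peeling off $\psi_{c_0}$ via condition (c) of the definition of simple characters to get $\theta\mid_{H^{m_1+1}}=\psi_{c_0}\theta'$ with $\theta'\in\mathcal{C}(\A,m_1,\beta_1)$, applying the induction hypothesis to $\theta'$, and extending each factor ${\theta'}^i$ to the full group by $\phi_i\circ\det_{A_i}$. Your Step 1 (the product decomposition of $H^{m+1}(\beta,\A)$ into the $1+\p_i^{m_i+1}$) is used but not proved in the paper, so making it explicit is a reasonable addition rather than a divergence.

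The one step that would fail as written is Step 3. You propose to read $\phi_0$ off directly from condition (a) applied to $\theta$, i.e. $\theta\mid_{1+\p_0^{m_0+1}}=\phi_0\circ\det_{A_0}$, and then to define $\theta^0$ by this formula on $1+\p_0^{m_0+1}$ and by $\psi_{c_0}$ on $H^{m_1+1}(\beta_0)$. But then, on the factor $1+\p_0^{m_0+1}$, the product $\prod_{i=0}^d\theta^i$ equals $\phi_0\circ\det_{A_0}\cdot\prod_{i\geq1}\phi_i\circ\det_{A_i}$, which differs from $\theta\mid_{1+\p_0^{m_0+1}}$ by the generally nontrivial factor $\prod_{i\geq1}\phi_i\circ\det_{A_i}\mid_{1+\p_0^{m_0+1}}$ --- exactly the ``coherence'' problem you flag in your last paragraph but leave unresolved. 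The paper dissolves it by defining $\theta^0:=\theta\cdot\prod_{i=1}^{s}(\theta^i)^{-1}$ \emph{after} the $\theta^i$, $i\geq1$, have been fixed, and then verifying two things: that this quotient still factors through $\det_{A_0}$ on $1+\p_0^{m_0+1}$ (which works because each $\det_{A_i}$ restricted to $A_0^{\times}$ is $N_{E_0/E_i}\circ\det_{A_0}$, so the correcting factor also factors through $\det_{A_0}$, and one simply names the resulting character $\phi_0$), and that on $(1+\p_1^{m_1+1})\cdots(1+\p^{m_d+1})$ the quotient collapses to $\theta\cdot(\theta')^{-1}=\psi_{c_0}$. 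Replacing your direct definition of $\theta^0$ by this quotient definition closes the gap and brings your argument in line with the paper's.
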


   \begin{proof} $\widetilde{BKH}$ Montrons le résultat par récurrence sur la longueur $s$ de la suite d'approximation. Si $s=0$, alors $\beta = \beta _0$ est minimal et la proposition résulte de la définition \ref{defsimpl}.
   Supposons le résultat vrai au rang $s-1$.
   
   D'après \ref{defsimpl}, $\theta \mid _{H^{m_1+1}}= \psi _{c_0} \times \theta ' $ avec $\theta ' \in \mathcal{C}(\A,m_1,\beta _1 )$
   D'après l'hypothèse de récurrence il existe $\phi _1,\ldots,\phi _s$ tel que $\theta '= \prod\limits_{i=1}^s {\theta '} ^i $ où :
  
   $\bullet$ $ {\theta ' }^i \mid _{(1+\p_1 ^{m_1 +1}) \cdots (1+\p_i ^{m_i +1}) }(1+x)=\phi _i\circ det_{A_i}(1+x)$
   
   $\bullet $ ${\theta '} ^i \mid _{(1+ \p _{i+1}^{m_{i+1}+1})\cdots(1+\p  ^{m_d +1})}(1+x) = \psi \circ \mathrm{tr}_{A/F}(c_i x)$, si $i \leq d-1$.

   On prolonge ${\theta '} ^i$ à $H^m(\beta_0, \A)$ grâce à $\phi _ i $ et on note $\theta ^i$ ce prolongement:
   
  $\bullet$ $\theta ^i \mid _{(1+\p _0^{m_0+1})\cdots(1+ \p _i ^{m_i+1})}(1+x)=\phi _i \circ det_{A_i}(1+x)$

   $\bullet$ $\theta  ^i \mid_{(1+ \p_{i+1}^{m_{i+1}+1})\cdots(1+ \p ^{m_d +1 })} (1+x) ={\theta '} ^i \mid_{(1+ \p_{i+1}^{m_{i+1}+1})\cdots(1+ \p ^{m_d +1 })} (1+x)=\psi \circ \mathrm{tr}_{A/F}(c_i x)$

   Posons $\theta ^0  = \theta \times  \prod\limits_{i=1}^s (\theta ^i )^{-1}$.

   Alors $\theta ^0 \mid _{(1 +\p _0    ^{m_0 +1})}$ factorise par $det_{A_0}$. Il existe donc un caractère lisse $\phi _0$ de $E_0 ^{\times}$ tel que $\theta ^0 \mid _{(1 +\p _0    ^{m_0 +1})}(1+x)=\phi _0 \circ det_{A_0} (1+x)$.
   
   Enfin \begin{align*} \theta ^0 \mid_{(1+ \p_{1}^{m_{1}+1})\cdots(1+ \p ^{m_d +1 })}(1+x)=& \left( \theta  \mid_{(1+ \p_{1}^{m_{1}+1})\cdots(1+ \p ^{m_d +1 })} \times  \prod\limits_{i=1}^s (\theta ^i )^{-1} \mid_{(1+ \p_{1}^{m_{1}+1})\cdots(1+ \p ^{m_d +1 })}\right)(1+x)\\
    &= \left( \theta \mid _{H^{m_1+1}} \times (\theta ')^{-1}\right)(1+x)\\
    &= \left( \psi _{c_0} \times \theta ' \times (\theta ')^{-1}\right)(1+x)\\
    & = \psi _{c_0}(1+x) \\
    &=\psi \circ \mathrm{tr}_{A/F}(c_0 x) 
   \end{align*}

   \end{proof}
   
    \item[(Cas B)]

   \underline{Si $\beta_s $ n'appartient pas à $F$} on pose $d=s+1$.\\
   
   On a dans ce cas $H^1(\beta , \A ) = (1+\p _0 ^1) (1+\p _1 ^{[\frac{r_1}{2}]+1})\cdots (1+ \p ^{[\frac{n}{2}]+1}).$\\
   
   Posons $G^i = \mathrm{Res} _{E_i /F} \mathrm{Aut}(_{E_i} (V)$ pour $0 \leq i \leq s=d-1$, et posons $G^d=G=\mathrm{Aut}_F(V) $ ce sont des schémas en groupes réductifs.\\

    On a des plongements naturels $G^0 \hookrightarrow G^1 \hookrightarrow \cdots \hookrightarrow G^i \hookrightarrow \cdots \hookrightarrow G^d $ de groupes algébriques.

   \begin{prop} La suite $G^0 \hookrightarrow G^1 \hookrightarrow \cdots \hookrightarrow G^i \hookrightarrow \cdots \hookrightarrow G^d $  est une suite de Levi tordue modérée tel que $Z(G^0)/Z(G)$ est anisotrope.

\end{prop}

\begin{proof} Même preuve que dans le (Cas A).

\end{proof}

 \begin{prop}\label{filtB} Il existe des applications $\mathcal{B}(G^0,F) \hookrightarrow  \cdots \hookrightarrow \mathcal{B}(G^i,F) \hookrightarrow \cdots \mathcal{B}(G^d,F)$, des\footnote{On a mis en gras les nombres réels $\mathbf{r}$ " du coté Moy-Prasad" pour les différencier de ceux " du coté Bushnell-Butzko".} nombres réels $\mathbf{r}_i$, et un point $y\in \mathcal{B}(G^0 , F)$ tel que pour tout $1\leq i \leq s=d-1 $:
 
 \begin{enumerate}
 \item[$\bullet$] $ 1+ \p _i ^{[\frac{r_i}{2}]+1} = G^i(F)_{y, \frac{\mathbf{r}_{i-1}}{2}+}$
 \item[$\bullet$] $ 1+ \p _i ^{[\frac{r_i+1}{2}]} = G^i(F)_{y, \frac{\mathbf{r}_{i-1}}{2}}$
 \item[$\bullet$] $ 1+ \p _i ^{r_i+1} = G^i(F)_{y, \mathbf{r}_{i-1} +}$
 \item[$\bullet$] $ 1+ \p _i ^{r_i} = G^i(F)_{y, \mathbf{r}_{i-1}} $
 \item[$\bullet$] $  \p _i ^{[\frac{r_i}{2}]+1} = \mathfrak{g}^i(F)_{y, \frac{\mathbf{r}_{i-1}}{2}+}$
 \item[$\bullet$] $  \p _i ^{[\frac{r_i+1}{2}]} = \mathfrak{g}^i(F)_{y, \frac{\mathbf{r}_{i-1}}{2}}$
 \item[$\bullet$] $  \p _i ^{r_i+1} = \mathfrak{g}^i(F)_{y, \mathbf{r}_{i-1} +}$
 \item[$\bullet$] $  \p _i ^{r_i} = \mathfrak{g}^i(F)_{y, \mathbf{r}_{i-1}} $ 
 \end{enumerate}

\end{prop}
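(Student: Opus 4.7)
The plan is to reduce directly to the argument already used for Proposition \ref{filtA}. The key observation is that Proposition \ref{filtB} only makes assertions at indices $1 \leq i \leq s = d-1$, and for these indices the algebraic groups $G^i = \mathrm{Res}_{E_i/F}\mathrm{Aut}_{E_i}(V)$ are defined in exactly the same way in Cas A and in Cas B. The only group that genuinely differs between the two cases is $G^d$ itself ($G^d = \mathrm{Res}_{E_d/F}\mathrm{Aut}_{E_d}(V)$ with $E_d = F$ in Cas A, versus $G^d = G$ in Cas B). Since the eight filtration identifications in the statement are only required for $i \leq d-1$, the presence of a different $G^d$ plays no role in their proof.

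First I would set up the buildings and their embeddings. For each $0 \leq i \leq d-1$ one has $G^i(F) \simeq \mathrm{GL}_{N_i}(E_i)$, so $\mathcal{B}(G^i,F)$ identifies with $\mathcal{B}(\mathrm{GL}_{N_i},E_i)$. Because $E_s \subset E_{s-1} \subset \cdots \subset E_0$ is a tower of tamely ramified extensions (since $E_0 = F[\beta]$ is tame over $F$), the results of Broussous \cite{Brou} and their Appendix A, combined with \cite[2.4]{PY}, furnish a compatible chain of $G^0(F)$-equivariant embeddings $\mathcal{B}(G^0,F) \hookrightarrow \cdots \hookrightarrow \mathcal{B}(G^{d-1},F)$. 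For the final embedding $\mathcal{B}(G^{d-1},F) \hookrightarrow \mathcal{B}(G^d,F) = \mathcal{B}(G,F)$ in Cas B, the same appendix of \cite{Brou} applies, using only that $E_s / F$ is tame.

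Next I would take $y \in \mathcal{B}(G^0,F)$ to be the point attached to the hereditary $\mathfrak{o}_{E_0}$-order $\B_0 = \A \cap A_0$ under the Bruhat–Tits/hereditary-order correspondence for $\mathrm{GL}_{N_0}(E_0)$. Under the embeddings above, the image of $y$ in $\mathcal{B}(G^i,F)$ is the point attached to $\B_i = \A \cap A_i$, and then the Moy–Prasad filtrations of $G^i(F) = \mathrm{GL}_{N_i}(E_i)$ at this point coincide with the filtrations by powers of $1 + \mathfrak{Q}_i$, where $\mathfrak{Q}_i$ is the Jacobson radical of $\B_i$; on the Lie algebra side one likewise recovers the powers of $\p_i = \p \cap A_i$. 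This is precisely the content of \cite{Brou} and \cite[2.4]{PY}, and yields the eight required identifications once the jumps are indexed correctly. The real numbers $\mathbf{r}_i$ are defined, as in Cas A, by $\mathbf{r}_i = -\mathrm{ord}(c_i)$ for $0 \leq i \leq d-2$ (and $\mathbf{r}_{d-1} = -\mathrm{ord}(\beta_s)$ in Cas B); the rescaling $e\mathbf{r}_{i-1} = r_i$, with $e = e(\A \mid \mathfrak{o}_F)$, comes from $r_i = -\nu_{\A_i}(c_{i-1})$ and the elementary valuation formula $\nu_{\A_i}(c_{i-1}) = \mathrm{ord}(c_{i-1})\, e$, exactly as in Cas A.

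There is no real obstacle beyond these bookkeeping verifications; the substantive work (the building-theoretic comparison between hereditary orders and Moy–Prasad filtrations in a tame Levi) is entirely contained in \cite{Brou} and \cite[2.4]{PY}, and its conclusions carry over verbatim to the range of indices $1 \leq i \leq s = d-1$ that appears in the statement. The mild point worth checking is that the tameness of $E_s/F$ really is enough to embed $\mathcal{B}(G^{d-1},F)$ into $\mathcal{B}(G,F)$ in Cas B, but this is precisely what Appendix A of \cite{Brou} provides.
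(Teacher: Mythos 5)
Your proposal is correct and follows essentially the same route as the paper: the paper proves the Cas A analogue (Proposition \ref{filtA}) by citing \cite{Brou}, its Appendix A, and \cite[2.4]{PY}, and leaves Cas B to the same references, exactly the building-theoretic comparison you invoke. Your additional observations (that the statement only concerns $i\leq d-1$ where the groups $G^i$ agree with Cas A, the choice of $y$ via the hereditary order $\mathfrak{B}_0$, and the rescaling $e\mathbf{r}_{i-1}=r_i$) are consistent with what the paper does around Proposition \ref{filtA} and after Proposition \ref{filtB}.
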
 
   
   On a $\mathbf{r}_i =- \mathrm{ord}(c_i)$ pour $0\leq i \leq s-1=d-2$. \\
   
   Posons $\mathbf{r}_d = \mathbf{r}_{d-1}=  -\mathrm{ord}(\beta _s).$
   
   On a alors $1+ \p _d ^{n+1} = G^d_{y,\mathbf{r_{d-1}} +}$

   \begin{prop} \label{factB} Soit $\theta \in \mathcal{C}(\A,m,\beta)$, posons $m_i=\mathrm{max}\{m,[\frac{r_i}{2}]\}$ pour $0\leq i \leq s$, $m_d =\mathrm{max}\{m,\frac{n}{2} \}$ il existe $\phi_0,\ldots,\phi_s$  des caractères de $E_0 ^{\times},\ldots,E_s ^{\times}$ tel que:
   
   \begin{center}
   
  $ \theta= \prod\limits_{i=0} ^{(s=d-1)} \theta ^i $
   
   \end{center}
   
   où $\theta_i$ est défini par les $2$ conditions:
   
 $\bullet$  $\theta ^i \mid _{(1+\p _0^{m_0+1})\ldots (1+ \p _i ^{m_i+1})}(1+x)=\phi _i \circ \mathrm{det}_{A_i}(1+x)$

  $\bullet$ $\theta  ^i \mid_{(1+ \p_{i+1}^{m_{i+1}+1})\cdots (1+ \p ^{m_d +1 })} (1+x) =\psi \circ \mathrm{tr}_{A/F}(c_i x)$ , si $i \leq d-1$.

   \end{prop}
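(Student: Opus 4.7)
Le plan consiste à reprendre mutatis mutandis la démonstration de la Proposition \ref{factA} par récurrence sur la longueur $s$ de la suite d'approximation, en adaptant les indices au fait que dans le Cas B on a $d = s+1$ et que $c_s = \beta_s$ engendre une vraie extension $E_s/F$.

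Pour le cas de base $s=0$, l'élément $\beta = \beta_0$ est minimal sur $F$ avec $F[\beta] = E_0 \supsetneq F$, et $c_0 = \beta_0 = \beta$. La Définition \ref{defsimpl}($i$) appliquée à $\theta \in \mathcal{C}(\A,m,\beta)$ donne directement les deux conditions requises: la condition ($a$) fournit $\theta|_{(1+\p^{m_d+1})}(1+x) = \psi \circ \mathrm{tr}_{A/F}(\beta x)$ puisque $m_d = \max\{m, n/2\}$, et la condition ($b$) garantit l'existence d'un caractère lisse $\phi_0$ de $E_0^{\times}$ tel que $\theta|_{(1+\p_0^{m_0+1})}(1+x) = \phi_0 \circ \mathrm{det}_{A_0}(1+x)$. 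On pose alors $\theta^0 = \theta$.

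Pour l'étape inductive, j'utiliserai la Définition \ref{defsimpl}($ii$) pour écrire $\theta|_{H^{m_1+1}(\beta_1)} = \psi_{c_0} \cdot \theta'$ avec $\theta' \in \mathcal{C}(\A, m_1, \beta_1)$. La suite d'approximation associée à $\beta_1$ est de longueur $s-1$ et reste dans le Cas B, car son dernier terme est toujours $\beta_s \notin F$. L'hypothèse de récurrence appliquée à $\theta'$ fournit des caractères $\phi_1, \ldots, \phi_s$ et une décomposition $\theta' = \prod_{i=1}^{s} (\theta')^i$. Je prolonge chaque $(\theta')^i$ à $H^{m+1}(\beta_0, \A)$ grâce à $\phi_i$, obtenant ainsi $\theta^i$, puis je pose $\theta^0 := \theta \cdot \prod_{i=1}^{s} (\theta^i)^{-1}$. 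La vérification que $\theta^0$ factorise par $\mathrm{det}_{A_0}$ sur $(1+\p_0^{m_0+1})$ (ce qui fournira $\phi_0$) et coïncide avec $\psi \circ \mathrm{tr}_{A/F}(c_0 x)$ sur le complémentaire se fera par le même calcul télescopique que dans \ref{factA}, utilisant essentiellement $\theta \cdot (\theta')^{-1}|_{H^{m_1+1}} = \psi_{c_0}$.

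Le seul point délicat par rapport au Cas A sera la comptabilité des indices au dernier cran: dans le Cas B, le facteur terminal $\theta^s = \theta^{d-1}$ doit porter simultanément la condition de factorisation par $\mathrm{det}_{A_s}$ et la condition $\psi \circ \mathrm{tr}(\beta_s x)$ sur $(1+\p^{m_d+1})$, alors que dans le Cas A le facteur $\theta^d$ ne portait que la condition déterminant (puisque $\beta_d = \beta_s \in F$ était absorbé dans $\phi_d$). Cette asymétrie reflète précisément le fait que dans le Cas B l'élément minimal $\beta_s$ engendre une vraie extension de $F$ et contribue donc à la partie "sauvage profonde" du caractère simple, ce qui justifie que l'indice maximal où une condition de type $\psi$ apparaît soit $i = d-1$ et non $i = d$.
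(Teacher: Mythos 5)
Votre démonstration est correcte et suit exactement la voie que le texte indique, puisque la preuve du (Cas B) dans l'article se réduit à la mention \og Analogue au (Cas A)\fg{} : vous explicitez fidèlement cette analogie (récurrence sur $s$, extraction de $\psi_{c_0}$ via la définition \ref{defsimpl}($ii$), prolongement des $(\theta')^i$, calcul télescopique). Votre remarque finale sur le fait que le facteur terminal $\theta^{s}=\theta^{d-1}$ porte à la fois la condition déterminant et la condition $\psi\circ\mathrm{tr}(\beta_s x)$ sur $(1+\p^{m_d+1})$ est précisément le point où les deux cas diffèrent, et elle est exacte.
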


   \begin{proof} Analogue au (Cas A).
   
   \end{proof}
   
   \end{enumerate}

\begin{rema} Si l'on enlève l'hypothèse que la strate simple est modérée, alors il n'est pas vrai en général que l'on puisse trouver une suite d'approximation dont la suite des corps associés est décroissante, on pourrait cependant définir des groupes algébriques qui serait encore des Levi de $G$, mais la suite des groupes obtenue ne serait pas croissante, ainsi bien-sûr, la construction de Yu ne couvre pas la construction de Bushnell-Kutzko. Cependant lorsque $p \not ~\mid N$, par définition, toutes les strates simples sont modérées.
\end{rema}

   \begin{prop} \label{cage} Soit $i\in \{0,\ldots ,d-1\}$, alors $\phi _ i \circ det : G^i \to \C ^{\times}$ est $G^{i+1}$- générique de profondeur $\mathbf{r}_i$
   \end{prop}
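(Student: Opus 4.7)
Pour montrer que $\phi_i \circ \det$ est $G^{i+1}$-générique de profondeur $\mathbf{r}_i$, il faut : (a) vérifier que ce caractère est trivial sur $G^i(F)_{y,\mathbf{r}_i+}$ mais non trivial sur $G^i(F)_{y,\mathbf{r}_i}$ ; (b) identifier l'élément $X^* \in (\Lie^*(Z(G^i))^\circ)_{-\mathbf{r}_i}$ qui réalise $\phi_i \circ \det$ sur le quotient $G^i(F)_{y,\mathbf{r}_i : \mathbf{r}_i+}$ ; (c) vérifier que cet $X^*$ satisfait les conditions (GE1) et (GE2) relativement à $G^{i+1}$.

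\textbf{Identification de $X^*$.} D'après la factorisation de la Proposition \ref{factA} (ou \ref{factB}), on a sur le quotient pertinent $(\phi_i \circ \det_{A_i})(1+x) = \psi \circ \tr_{A/F}(c_i x)$ ; par transitivité de la trace on en déduit $\phi_i(1+y) = \psi \circ \tr_{E_i/F}(c_i y)$ pour $y$ dans la filtration adéquate de $\mf{o}_{E_i}$. Via la forme bilinéaire invariante $(x,y) \mapsto \psi \circ \tr_{A/F}(xy)$, l'élément $X^*$ s'identifie à $c_i \in E_i = \Lie(Z(G^i))(F)$, plongé dans $\Lie^*(G^i)$. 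Les Propositions \ref{filtA} et \ref{filtB}, combinées à l'égalité $\mathbf{r}_i = -\mathrm{ord}(c_i)$, entraînent alors que $\phi_i \circ \det$ est de profondeur exactement $\mathbf{r}_i$ et que $c_i$ appartient à $(\Lie^*(Z(G^i))^\circ)_{-\mathbf{r}_i}$.

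\textbf{Vérification de (GE1) et (GE2).} Fixons un tore maximal $T \subset G^0$ contenant $E_0^{\times}$. Après extension à $\bar{F}$, le groupe $G^{i+1} \otimes \bar{F}$ se décompose selon les plongements $\tau : E_{i+1} \inj \bar{F}$, et chaque facteur $\GL_{N_{i+1}}$ admet la composante correspondante de $G^i$ comme sous-groupe de Levi via la partition des prolongements $\sigma : E_i \inj \bar{F}$ de $\tau$. Les racines $a \in \phi(G^{i+1}, T, \bar{F}) \setminus \phi(G^i, T, \bar{F})$ correspondent aux paires $(\sigma, \sigma')$ de tels prolongements distincts d'un même $\tau$, et l'on a $X^*(H_a) = \sigma(c_i) - \sigma'(c_i)$ à une normalisation près. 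Comme $c_i$ est minimal sur $E_{i+1}$ (conséquence de l'approximation construite et de la Proposition \ref{minider}), la Proposition \ref{minigene} entraîne que $sr(c_i)$ engendre $E_i$ sur $E_{i+1}$ ; en passant à la clôture galoisienne $\tilde{E}_i/E_{i+1}$ et en appliquant la Proposition \ref{CE}(ii), on obtient $\mathrm{ord}(\sigma(sr(c_i)) - \sigma'(sr(c_i))) = \mathrm{ord}(sr(c_i)) = -\mathbf{r}_i$ pour $\sigma \neq \sigma'$ ; puisque $c_i \equiv sr(c_i)$ modulo un terme de valuation strictement supérieure, cela établit (GE1). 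Pour (GE2), les valeurs réduites $\overline{\sigma(sr(c_i))\pi_F^{\mathbf{r}_i}}$ sont, par le même argument, deux à deux distinctes lorsque $\sigma$ varie au-dessus d'un $\tau$ fixé ; tout élément du groupe de Weyl de $G^{i+1}$ stabilisant $\tilde{X}^*$ doit donc préserver la décomposition en blocs indexés par les prolongements $\sigma$ au-dessus de chaque $\tau$, c'est-à-dire appartenir au groupe de Weyl de $G^i$.

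\textbf{Difficulté principale.} Les étapes (a) et (b) sont des vérifications directes à partir des formules des Propositions \ref{factA}/\ref{factB} et \ref{filtA}/\ref{filtB}. Le point délicat se trouve à l'étape (c) : il s'agit d'une part de normaliser correctement l'accouplement entre racines et cocaractères pour identifier $X^*(H_a)$ à $\sigma(c_i) - \sigma'(c_i)$, et d'autre part de transposer la Proposition \ref{CE}(ii), énoncée pour les extensions galoisiennes, au cas $E_i/E_{i+1}$ qui ne l'est pas nécessairement, via la clôture galoisienne $\tilde{E}_i/E_{i+1}$ en suivant les valuations à travers le changement de base. Une fois ce dictionnaire mis en place, les deux conditions (GE1) et (GE2) se ramènent exactement au fait que $sr(c_i)$ engendre $E_i$ sur $E_{i+1}$, c'est-à-dire à la minimalité de $c_i$ établie au paragraphe \ref{stsm}.
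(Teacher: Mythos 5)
Votre démonstration suit pour l'essentiel la même route que celle du texte : identification de $X^*$ avec $c_i$ (remplacé par $sr(c_i)$ modulo un terme de valuation supérieure), décomposition de $E_i\otimes\bar F$ et de $G^{i+1}\otimes\bar F$ selon les plongements $\tau$ et $\sigma$, calcul de $X^*(H_a)=\sigma(sr(c_i))-\sigma'(sr(c_i))$, puis passage à la clôture galoisienne de $E_i/E_{i+1}$ pour appliquer la Proposition \ref{CE}($ii$) et la minimalité de $c_i$ via \ref{minigene} — c'est exactement le contenu des deux lemmes imbriqués de la preuve du texte. La seule divergence réelle concerne (GE2) : le texte l'obtient en invoquant \cite[8.1]{Yu} (absence de premier de torsion en type $A$, donc (GE1) entraîne (GE2)), tandis que vous donnez une vérification directe en observant que les réductions $\overline{\sigma(sr(c_i))\pi_F^{\mathbf{r}_i}}$ sont deux à deux distinctes au-dessus d'un $\tau$ fixé, de sorte que le stabilisateur de $\tilde X^*$ dans le groupe de Weyl se réduit à $W(\phi(G^i,T,\bar F))$ ; cet argument est correct et revient à redémontrer le lemme de Yu dans le cas particulier du type $A$, ce qui rend la preuve plus autonome au prix d'un peu de combinatoire supplémentaire. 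Les deux approches sont donc équivalentes sur le fond, et votre traitement de (GE1) coïncide avec celui du texte.
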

   
   \begin{proof}$\widetilde{BKHY}$
   
    On doit d'après le §\ref{egcg} vérifier (GE1) et (GE2).\\
    
     Commençons par (GE1).

   On a (\cite[2.3]{Kim}) $G^i(F)_{y,\mathbf{r}_i}=G^{i+1}(F)_{y,\mathbf{r}_i} \cap G^i(F)=\left(1+ \p _{i+1}^{r_{i +1}} \right) \cap  A_i ^{\times} = 1+ \p _i ^{r_{i +1}}$.
   
   On en déduit donc que 
   $\phi _i \circ det  \mid_{G^i(F)_{y,\mathbf{r}_i}} (1+x)= \phi _i \circ det \mid _{1 + \p _i ^{r_{i +1}}} (1+x) = \theta ^i \mid _{1+ \p _i ^{r_{i +1} } }(1+x) = \psi \circ \mathrm{tr} (c_{i} x) = \psi \circ \mathrm{tr} (sr(c_{i}) x )$. La dernière égalité est du au fait que \begin{center}$\theta ^i \mid _{1+ \p _i ^{r_{i +1}}} \in \left((1+ \p _i ^{r_{i+1}})/(1+\p _i ^{r_{_i+1}+1} \right) ^{\vee} \simeq \p _i ^{-r_{i+1}} / \p _i ^{-r_{i+1}+1}$\end{center} et que $c_i\equiv sr(c_i) \pmod{\p _i ^{-r_{i+1}+1}}$ . On a $\mathrm{ord}(c_i)=-\mathbf{r}_i$.
    Il suffit donc de montrer le lemme suivant:
    
    \begin{lemm} Soit 
 \begin{small} \shorthandoff{;:!?} \xymatrix @!=0,01cm{E' \ar @{-}[d]^{\not =}\\ E\ar @{-}[d] \\ F } \end{small} une tour d'extensions finies modérément ramifiées. Soit $V$ un $E'-$espace vectoriel de dimension finie. Soit $c\in E'$ minimal sur $E$ et $-\mathbf{r}=\mathrm{ord}(c)$.
    
    Posons $G'=\mathrm{Res}_{E'/F}\mathrm{Aut}_{E'}(V)$, $G=\mathrm{Res}_{E/F}\mathrm{Aut}_{E}(V)$ et $D=[E:F]$, $D'=[E':F].$\\
    
    Alors l'élément $x\mapsto \mathrm{tr} (sr(c) x)$ est $G-$générique de profondeur $\mathbf{r}$.
    \end{lemm} 
    \begin{proof} $\widetilde{BKHY}$\\
    
    Le choix d'une $E'-$base de $V$ donne naissance à un tore $T$ de $G'$:
    \begin{center} $T=\mathrm{Res}_{E'/F}(\mathbb{G}_m^{N'})\subset G'=\mathrm{Res}_{E'/F}\mathrm{Aut}_{E'}(V)\subset G=\mathrm{Res}_{E/F}\mathrm{Aut}_{E}(V) $.\end{center}
    
 On a un isomorphisme de $F-$algèbre   $E'\otimes \bar{F} \simeq \prod\limits_{\tau : E \to \bar{F}} \prod\limits_{ \underset{\sigma \mid _E = \tau} {\sigma : E' \to \bar{F}}}
\bar{F}$ ($\tau$ et $\sigma$ sont des  $F$-plongements)

Numérotons les plongements comme suit : Il y a $D$ plongements $\tau$, on les note $\{ \tau_1,\ldots,\tau _k,\ldots ,\tau _D\}$ pour chaque $\tau_k$ il y a $D'/D$ plongements $\sigma$, on les note $\{ \sigma_{k1},\ldots ,\sigma_{kk'},\ldots ,\sigma_{kD/D'} \}$

Sur $\bar{F}$ on obtient le diagramme suivant:

\xymatrix{&T(\bar{F})=((E'\otimes \bar{F})^{\times})^{N'} \ar[d]^*[@]{\cong} \ar@{^{(}->}[r] & G'(\bar{F})=\mathrm{GL}_{N'}(E' \otimes \bar{F})\ar[d]^*[@]{\cong} \ar@{^{(}->}[r]& G(\bar{F})=\mathrm{GL}_N(E\otimes \bar{F})\ar[d]^*[@]{\cong}\\
&(\prod\limits_{k=1}^{D} \prod\limits_{k'=1}^{D'/D}(\bar{F}^{\times} ))^{N'} \ar[d]^*[@]{\cong} \ar@{^{(}->}[r] & \mathrm{GL}_{N'}(\prod\limits_{k=1}^{D} \prod\limits_{k'=1}^{D'/D}\bar{F})\ar[d]^*[@]{\cong} \ar@{^{(}->}[r] & \mathrm{GL}_N(\prod\limits_{k=1}^{D}\bar{F})\ar[d]^*[@]{\cong}\\
&\prod\limits_{k=1}^{D} \prod\limits_{k'=1}^{D'/D}(\bar{F}^{\times N'})    \ar@{^{(}->}[r] & \prod\limits_{k=1}^{D} \prod\limits_{k'=1}^{D'/D}\mathrm{GL}_{N'}(\bar{F}) \ar@{^{(}->}[r] & \prod\limits_{k=1}^{D}(\mathrm{GL}_N(\bar{F}))}

Au niveau des algèbres de Lie on a :

\xymatrix{E' \otimes F \ar[d]^*[@]{\cong} \\ Z(\mathrm{Lie}(G'(\bar{F}))) \ar[d]^*[@]{\cong} \ar@{^{(}->}[r]&\mathrm{Lie}(T(\bar{F))}\ar[d]^*[@]{\cong}\ar@{^{(}->}[r] & \mathrm{Lie} (G'(\bar{F}))\simeq M_{N'} (E' \otimes \bar{F}) \ar[d]^*[@]{\cong} \ar@{^{(}->}[r] &  \mathrm{Lie} (G(F))\simeq \mathrm{M}_N(E\otimes \bar{F}) \ar[d]^*[@]{\cong} \\
 \prod\limits_{k=1}^{D} \prod\limits_{k'=1}^{D'/D}(\bar{F})\ar@{^{(}->}[r]& \prod\limits_{k=1}^{D} \prod\limits_{k'=1}^{D'/D}(\bar{F}^{ N'})  \ar@{^{(}->}[r] &\prod\limits_{k=1}^{D} \prod\limits_{k'=1}^{D'/D}M_{N'}(\bar{F})\ar@{^{(}->}[r] & \prod\limits_{k=1}^{D}(\mathrm{M}_N(\bar{F}))}

Notons $\iota$ l'application $E'\otimes \bar{F} \to \prod\limits_{k=1}^{D}(\mathrm{M}_N(\bar{F})) $ obtenue ci-dessus.

L'élément $sr(c)\otimes 1 $ est envoyé sur

\begin{Tiny}

$\left( \begin{array}{ccc}
\begin{array}{|ccccc|}
\hline
\begin{array}{|c|}
\hline
\underline{\sigma_{11}}  ~~0~~~~~ 0 \\[1ex] 

0~~~\ddots ~~~~ 0 \\[1ex] 

0~~~~~0~~\underline{\sigma_{11}}  \\[1ex] 

\hline
\end{array} & 0 & 0 &0&0\\
0&\ddots&0&0&0\\
0 &0& \begin{array}{|c|}
\hline
\underline{\sigma_{1k'}} ~ 0 ~~~~~0 \\[1ex] 

0~~~\ddots ~~~~0 \\[1ex] 

0~~~~~0~\underline{\sigma_{1k'} } \\[1ex] 
\hline
\end{array} & 0&0 \\
0&0&0&\ddots&0\\
0 & 0 &0&0&  \begin{array}{|c|}
\hline
\underline{\sigma_{1\frac{D'}{D}}} ~ 0 ~~~~~~~0 \\[1ex] 

0~~~~~~\ddots  ~~~~0\\[1ex] 

0~~~~~~0~\underline{\sigma_{1\frac{D'}{D}} }  \\[1ex] 
\hline
\end{array} \\
        \hline
\end{array} & 0 & 0 \\

0 & \ddots & 0 \\
0 & 0 & \begin{array}{|ccccc|}
\hline
\begin{array}{|c|}
\hline
\underline{\sigma_{D1}} ~~ 0 ~~~~~~0 \\[1ex] 

0~~~~~\ddots  ~~~~0 \\[1ex] 

0~~~~~~0~~\underline{\sigma_{D1}}  \\[1ex] 

\hline
\end{array} & 0 & 0 &0&0\\
0&\ddots&0&0&0\\
0 &0& \begin{array}{|c|}
\hline
\underline{\sigma_{Dk'}} ~~ 0 ~~~~~~~0 \\[1ex] 

0~~~~~~~\ddots ~~~~0 \\[1ex] 

0~~~~~~~0~~\underline{\sigma_{Dk'} } \\[1ex] 
\hline
\end{array} & 0&0 \\
0&0&0&\ddots&0\\
0 & 0 &0&0&  \begin{array}{|c|}
\hline
\underline{\sigma_{D\frac{D'}{D}}} ~~ 0 ~~~~~~~~~0 \\[1ex] 

0~~~~~~~~\ddots  ~~~~~~~0\\[1ex] 

0~~~~~~~~~0~~\underline{\sigma_{D\frac{D'}{D}}  } \\[1ex] 
\hline
\end{array} \\
        \hline
\end{array}  \\
\end{array} \right)
$

\end{Tiny}

où l'on a posé $\sigma_{kk'}(sr(c))=\underline{\sigma_{kk'}}$.

Soit $a\in \phi ( G,T,\bar{F}) \setminus \phi( G',T,\bar{F})$, soit $H_a=da^{\vee}(1)$ \cite[§6]{Yu}. Il existe $k\in\{1,\ldots ,D\} $ et $k_1' \not = k_2' \in \{1,\ldots ,D'/D\}$ tel que $H_a = \mathrm{diag}(0,\ldots,0,1,0,\ldots,0,-1,0,\ldots,0)$ et les coefficients $1$ et $-1$ sont aux places $kk'_1$ et $kk'_2$ respectivement.

On a  $X^*(H_a)=\mathrm{tr}( \iota(sr(c) H_a)=\sigma_{kk'_1}(sr(c))-\sigma_{kk'_2}(sr(c))$, il suffit donc de montrer que $\mathrm{ord}(\sigma _{kk'_1}(sr(c))-\sigma_{kk'_2}(sr(c))=\mathrm{ord}(sr(c))$.

Cela résulte du lemme suivant:
\begin{lemm} Soit 
 \begin{small} \shorthandoff{;:!?} \xymatrix @!=0,01cm{E' \ar @{-}[d]^{\not = }\\ E\ar @{-}[d] \\ F } \end{small} une tour d'extensions modérément ramifiées.
Soit $c\in E'$ minimal sur $E$. Soit $\tau: E\to \overline{F}$ un $F-$plongement. Soient $\sigma_1 \not =\sigma_2$, $E'\to \overline{F}$ deux $F-plongements$ tel que $\sigma _1 \mid _E = \sigma _2 \mid _E.$

Alors $\mathrm{ord}((\sigma_1(sr(c))-\sigma_2(sr(c)))=\mathrm{ord}(sr(c))
$
\end{lemm}
\begin{proof}
Soit $\overline{E'}^{gal}$ la clôture galoisienne de $E'$.

Il existe $\tilde{\sigma}_1$ et $\tilde{\sigma}_2$ : $\overline{E'}^{gal}\to \overline{E'}^{gal}$ prolongeant $\sigma_1$ et $\sigma _2$. Il existe aussi $\tilde{\tau} : \overline{E'}^{gal}\to \overline{E'}^{gal}$  prolongeant  $\tau$. On a alors:
\begin{align*}
\mathrm{ord}((\sigma_1(sr(c))-\sigma_2(sr(c)))&=\mathrm{ord}((\tilde{\sigma_1}(sr(c))-\tilde{\sigma_2}(sr(c)))\\
&=\mathrm{ord}(\tilde{\tau}^{-1}(\tilde{\sigma_1}(sr(c))-\tilde{\sigma_2}(sr(c)))\\
&=\mathrm{ord}(\tilde{\tau}^{-1}\tilde{\sigma_1}(sr(c))-\tilde{\tau}^{-1}\tilde{\sigma_2}(sr(c)))
\end{align*}
Puisque $sr(c)$ génère l'extension $E'/E$ d'après \ref{minigene}, on a  $\sigma _1 (sr(c)) \not = \sigma _2 (sr(c))$.
Or $\tilde{\tau}^{-1}\tilde{\sigma_1}$ et $\tilde{\tau}^{-1}\tilde{\sigma_2}$ sont deux éléments de $Gal(\overline{E'}^{gal}/E)$ tel que $\tilde{\tau}^{-1}\tilde{\sigma_1}(sr(c))\not = \tilde{\tau}^{-1}\tilde{\sigma_2}(sr(c))$.
Le résultat découle alors de  \ref{CE}.
\end{proof}
 On a ainsi montré que la condition (GE1) est vérifiée, la condition (GE2) est alors vérifiée d'après \cite[8.1]{Yu}, en effet il n'y a pas de premier de torsion pour $\psi(G)^{\vee}$ lorsque $G= \mathrm{GL}_N$ puisque alors $\psi(G)^{\vee}$ est de type $A$.
\end{proof}
    
   \end{proof}
   
  \section{Donnée de Yu associée et $\beta-extension$}\label{yube}
   
   Soit $[\A,n,0,\beta]$ une strate simple maximale\footnote{Une strate est dite maximale si $\mathfrak{A}$ est un $\mathfrak{o}_{F[\beta ]}-$ordre maximal} tel que $F[\beta]/F$ est modérément ramifiée. Soit $\theta \in \mathcal{C}(\A,0,\beta) $ un caractère simple. Soient $\phi_0 \circ det ,\ldots , \phi_s \circ det$, $y$ et $\overrightarrow{G}$ les caractères génériques, le point de l'immeuble et la suite de Levi tordue modérée associée à $\theta$ (cf \ref{factA}, \ref{factB}, \ref{filtA}, \ref{filtB}), si $s\not = d$ (Cas B), posons $\boldsymbol{\phi}_d  =1$, enfin posons $\boldsymbol{\phi}_i = \phi _i \circ det$ pour $ 0 \leq i \leq s$ . Posons $\overrightarrow{\boldsymbol{\phi}}=(\boldsymbol{\phi}_0   ,\ldots ,\boldsymbol{\phi} _d )$ et $\overrightarrow{\mathbf{r}}=(\mathbf{r}_0 , ... ,\mathbf{r} _d ) $\footnote{ On a mis en gras les caractères $\boldsymbol{\phi}$ "coté Yu".}

   \begin{prop}\label{grou} On a des égalités de groupes:
   \begin{enumerate}
   \item[($i$)]$ H^1(\beta,\A)=K_+^d$
   
   \item[($ii$)] $J^0(\beta,\A)= {}^{\circ}K^d $
   
   \item[($iii$)] $E^{\times} J^0 (\beta, \A) = K^d$
   \end{enumerate}
   \end{prop}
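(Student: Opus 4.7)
The plan is to check the three equalities by writing both sides as ordered products of smaller subgroups and matching them factor by factor via Propositions \ref{filtA} and \ref{filtB}. The product decomposition of $H^1(\beta,\A)$ as $(1+\p_0^1)(1+\p_1^{[r_1/2]+1})\cdots(1+\p^{m_d+1})$ is already recalled in the text (with $m_d=[r_d/2]$ in Cas A and $m_d=[n/2]$ in Cas B). The same induction along the approximation sequence $\{\beta_i\}$, applied this time to Definition \ref{defsimpl}(ii) of $\mf{J}(\beta,\A)$, yields the analogous decomposition
$$J^0(\beta,\A) \;=\; U(\mf{B}_\beta)\,(1+\p_1^{[(r_1+1)/2]})\cdots(1+\p^{m'_d}),$$
with $m'_d=[(r_d+1)/2]$ in Cas A and $m'_d=[(n+1)/2]$ in Cas B.

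For (i), Propositions \ref{filtA}/\ref{filtB} identify $1+\p_0^1$ with $G^0(F)_{y,0+}$ and each $1+\p_i^{[r_i/2]+1}$ with $G^i(F)_{y,\mathbf{s}_{i-1}+}$ for $1\leq i\leq d$ (using $\mathbf{s}_{i-1}=\mathbf{r}_{i-1}/2$); multiplying these gives exactly $K^d_+$. For (ii), the same propositions identify $U(\mf{B}_\beta)=U(\mf{B}_E)$ with the parahoric $G^0(F)_{y,0}$, and each $1+\p_i^{[(r_i+1)/2]}$ with $G^i(F)_{y,\mathbf{s}_{i-1}}$, which multiplied together give ${}^{\circ}K^d$.

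For (iii), the strategy is to reduce to (ii) by showing $E^\times U(\mf{B}_\beta)=G^0(F)_{[y]}$. Here the \emph{maximality} assumption enters crucially: since $[\A,n,0,\beta]$ is maximal, $\mf{B}_\beta=\mf{B}_E$ is a maximal $\mf{o}_E$-order in $B_E=\mathrm{End}_E(V)$, so its normalizer in $B_E^\times$ equals $\mf{K}(\mf{B}_E)=\langle\pi_E\rangle\cdot U(\mf{B}_E)=E^\times U(\mf{B}_E)$. Under the identification $G^0=\mathrm{Res}_{E/F}\mathrm{GL}_{N_0}$ and the associated $\mathcal{B}^{red}(G^0,F)\cong\mathcal{B}^{red}(\mathrm{GL}_{N_0},E)$, the stabilizer of $[y]$ in $G^0(F)$ is exactly $\mf{K}(\mf{B}_E)$, hence equals $E^\times G^0(F)_{y,0}$. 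Combining with (ii) then yields $E^\times J^0(\beta,\A)=G^0(F)_{[y]}\cdot G^1(F)_{y,\mathbf{s}_0}\cdots G^d(F)_{y,\mathbf{s}_{d-1}}=K^d$.

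The main technical point will be to pin down the product decomposition of $J^0(\beta,\A)$ (only the one for $H^1$ is explicit in the text) and to handle the distinction between Cas A and Cas B cleanly; in the latter, the last exponent is $[n/2]+1$ rather than $[r_d/2]+1$, and one must verify that this still corresponds to $\mathbf{s}_{d-1}+$ on the Yu side via $\mathbf{r}_d=-\mathrm{ord}(\beta_s)$ and the relation $e\mathbf{r}_{i-1}=r_i$. Once each factor is matched, the three equalities are formal.
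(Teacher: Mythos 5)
Your proposal is correct and follows essentially the same route as the paper: both sides of each equality are written as ordered products of the groups $1+\p_i^{k}$ (resp. $\A_0^\times$, $E^\times\A_0^\times$) and of the Moy--Prasad factors $G^i(F)_{y,\cdot}$, and the factors are matched one by one via Propositions \ref{filtA} and \ref{filtB}, distinguishing Cas A and Cas B. The only difference is that you spell out two points the paper leaves implicit — the inductive derivation of the product decomposition of $J^0(\beta,\A)$ and the identification $E^\times U(\mf{B}_\beta)=G^0(F)_{[y]}$ via the normalizer of the maximal order — which is a welcome addition rather than a divergence.
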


   \begin{proof}
   
   ($i$)On a dans le (Cas A) \begin{center}  $H^1(\beta , \A ) = (1+\p _0 ^1) (1+\p _1 ^{[\frac{r_1}{2}]+1})\cdots (1+ \p ^{[\frac{r_d}{2}]+1})$ \end{center}
    et dans le (Cas B) \begin{center}
    $H^1(\beta,\A)=(1+\p _0^{[\frac{0}{2}+1]})(1+\p _1 ^{[\frac{r_1}{2}] +1}) \cdots (1+\p _i ^{[\frac{r_i}{2}]+1}) \cdots (1+ \p^{[\frac{n}{2}]+1}).$\end{center} 
    
    Or \begin{center}
   
   $K^d _+ = G^0(F)_{y,0+} G^1(F)_{y,\mathbf{s}_0+}\cdots G^d(F)_{y,\mathbf{s}_{d-1}+}, $ \end{center}
   
   ($i$) résulte alors de  \ref{filtA} et \ref{filtB}.
   
   ($ii$)On a  dans le (Cas A) \begin{center}  $J^0(\beta , \A ) = \A _0 ^{\times} (1+\p _1 ^{[\frac{r_1+1}{2}] }) \cdots (1+\p _i ^{[\frac{r_i +1}{2}]}) \cdots (1+ \p^{[\frac{r_d +1}{2}]})$ \end{center}
   et dans le (Cas B) \begin{center}
   
    $J^0(\beta,\A)=\A _0 ^{\times} (1+\p _1 ^{[\frac{r_1+1}{2}] }) \cdots (1+\p _i ^{[\frac{r_i +1}{2}]}) \cdots (1+ \p^{[\frac{n +1}{2}]}).$\end{center}
   
   Or \begin{center}
   $K^d _+ = G^0(F)_{y,0} G^1(F)_{y,\mathbf{s}_0}\cdots G^d(F)_{y,\mathbf{s}_{d-1}} ,$ \end{center}
   
   ($ii$) résulte alors de  \ref{filtA} et \ref{filtB}.
   
   ($iii$) On a  dans le (Cas A)\begin{center} $E^{\times}J^0(\beta,\A)=E^{\times}\A _0 ^{\times} (1+\p _1 ^{[\frac{r_1+1}{2}] }) \cdots (1+\p _i ^{[\frac{r_i +1}{2}]}) \cdots (1+ \p^{[\frac{r_d +1}{2}]})$\end{center}
   
   et dans le (Cas B) \begin{center}$E^{\times}J^0(\beta,\A)=E^{\times}\A _0 ^{\times} (1+\p _1 ^{[\frac{r_1+1}{2}] }) \cdots (1+\p _i ^{[\frac{r_i +1}{2}]}) \cdots (1+ \p^{[\frac{n +1}{2}]}).$\end{center}
   
   Or \begin{center}
   
   $K^d _+ = G^0(F)_{[y]} G^1(F)_{y,\mathbf{s}_0}\cdots G^d(F)_{y,\mathbf{s}_{d-1}} $
   \end{center}
   
  et $E^{\times}\A _0 ^{\times}=G^0(F)_{[y]}$, ($iii$) résulte alors de \ref{filtA} et \ref{filtB}.
   
   \end{proof}
   
   \begin{prop} Soit $\hat{\boldsymbol{\phi} _i }$ le caractère de $K_+^d$ construit par $Yu$ à partir de $\boldsymbol{\phi} _i$ pour $0\leq i \leq d$ (cf §\ref{yu}). On a des égalités de caractères \begin{enumerate}
   \item[($i$)]$\hat{\boldsymbol{\phi} _i}=\theta ^i$ pour $ 0 \leq i \leq s$.
   
   \item[($ii$)]$\prod\limits_{i=0}^d\hat{\boldsymbol{\phi} _i}= \theta$
\end{enumerate}   \end{prop}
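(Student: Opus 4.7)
La stratégie est de démontrer d'abord (i), puis d'en déduire (ii) en multipliant sur $i$ et en utilisant la factorisation $\theta = \prod_{i=0}^{s} \theta^i$ donnée par les propositions \ref{factA} et \ref{factB}; dans le Cas B, on a de plus $\hat{\boldsymbol{\phi}_d}= 1$ puisque $\boldsymbol{\phi}_d$ est déjà trivial, ce qui assure l'égalité pour l'indice supplémentaire.

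Pour (i), je comparerai $\hat{\boldsymbol{\phi}_i}$ et $\theta^i$ facteur par facteur sur la décomposition
\[
K_+^d = G^0(F)_{y,0+}\, G^1(F)_{y,\mathbf{s}_0+}\cdots G^d(F)_{y,\mathbf{s}_{d-1}+} = \prod_{j=0}^d \bigl(1 + \p_j^{m_j+1}\bigr),
\]
fournie par \ref{grou}(i), \ref{filtA} et \ref{filtB} (avec la convention $m_0 = 0$). Pour les indices $j \leq i$, les deux caractères coïncident avec $\phi_i \circ \det_{A_i}$ sur le facteur $1 + \p_j^{m_j+1}$: c'est la définition même de $\theta^i$ (\ref{factA}, \ref{factB}) d'un côté, et de l'autre c'est ainsi que Yu définit $\hat{\boldsymbol{\phi}_i}$ comme unique prolongement de $\boldsymbol{\phi}_i = \phi_i\circ\det$ à $G^0(F)_{[y]}G^i(F)_0 G(F)_{\mathbf{s}_{i+}}$ (§\ref{yu}), les facteurs $j \leq i$ étant contenus dans $G^i(F)_0$.

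Pour les indices $j \geq i+1$, l'identification est plus délicate. Le caractère $\theta^i$ y vaut $1+x \mapsto \psi\circ \tr_{A/F}(c_i x)$. De son côté, $\hat{\boldsymbol{\phi}_i}$ est construit sur $G(F)_{y,\mathbf{s}_{i+}}$ via l'isomorphisme de Moy-Prasad $G(F)_{y,\mathbf{s}_{i+}:\mathbf{r}_{i+}} \simeq \mathfrak{g}(F)_{y,\mathbf{s}_{i+}:\mathbf{r}_{i+}}$ et la décomposition $\mathfrak{g}(F) = \mathfrak{g}^i(F) \oplus \mathfrak{n}^i(F) = A_i \oplus A_i^\perp$ (remarque \ref{nigl}). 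Je montrerai que la restriction de $\boldsymbol{\phi}_i$ à $G^i(F)_{y,\mathbf{s}_{i+}:\mathbf{r}_{i+}}$ est réalisée, via la forme $\psi\circ\tr_{A_i/F}$, par l'élément $c_i \in A_i$, ce qui est essentiellement le contenu de la preuve de \ref{cage}. L'extension triviale sur $\mathfrak{n}^i(F)$ prescrite par Yu coïncide alors automatiquement avec la formule $x \mapsto \psi\circ \tr_{A/F}(c_i x)$, puisque $c_i \in A_i$ annule $A_i^\perp = \mathfrak{n}^i(F)$ pour le couplage $\tr_{A/F}$.

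La principale difficulté sera d'étendre le calcul de \ref{cage} (qui ne concerne que la tranche $\mathbf{r}_i:\mathbf{r}_{i+}$) à toute la tranche $\mathbf{s}_{i+}:\mathbf{r}_{i+}$ pertinente ici, et de recoller proprement les deux régimes ($j \leq i$ et $j \geq i+1$) sur leurs intersections. Cela reposera sur les propriétés du représentant standard établies au §\ref{emrs} pour contrôler $c_i - sr(c_i)$ modulo les niveaux négligeables, et sur le fait que $\mathbf{s}_i = \mathbf{r}_i/2$ assure la cohérence entre les filtrations utilisées par Bushnell-Kutzko et par Moy-Prasad.
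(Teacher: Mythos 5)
Votre démarche est correcte et coïncide pour l'essentiel avec celle du texte : comparaison facteur par facteur, les deux caractères valant $\phi_i\circ\det$ sur les facteurs d'indice $j\leq i$, puis, sur les facteurs supérieurs, décomposition $x=\pi_{\mf{g}_i}(x)+\pi_{\mf{n}_i}(x)$ et annulation de $\mathrm{tr}(c_i\,\pi_{\mf{n}_i}(x))$ puisque $c_i\in A_i$ et $\mathfrak{n}^i(F)=A_i^{\perp}$ pour la forme trace, l'assertion ($ii$) s'obtenant ensuite par produit via \ref{factA}, \ref{factB} et $\boldsymbol{\phi}_d=1$ dans le Cas B. Le recours au représentant standard et au §\ref{emrs} que vous annoncez pour la tranche $\mathbf{s}_{i}+:\mathbf{r}_{i}+$ n'est pas nécessaire : la compatibilité entre $\phi_i\circ\det$ et $\psi\circ\mathrm{tr}(c_i\,\cdot)$ sur cette tranche est déjà contenue dans la définition de $\theta^i$ donnée par \ref{factA}.
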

   \begin{proof} $\widetilde{HY}$

   ($i$) Remarquons que d'après \ref{grou} les caractères sont bien définis sur le même groupe.
   D'après la définition \cite[§4]{Yu} on a :
   \begin{center}
   $\hat{\phi _i }\mid _{G^0(F)_{y,0+}\cdots  G^i(F)_{y,\mathbf{s}_{i-1}+}}(1+x)=\phi _i \circ det (1+x)$
   
   $\theta ^i \mid _{G^0(F)_{y,0+}\cdots  G^i(F)_{y,\mathbf{s}_{i-1}+}}(1+x)=\phi _i \circ det (1+x)$
   \end{center}

  On a \cite[§4]{Yu} $G^i(F)_{\mathbf{s}_i+ : \mathbf{r} _i +} \simeq \mathfrak{g}^i(F)_{\mathbf{s} _i + : \mathbf{r} _ i + } \subset \mathfrak{g}^i(F)_{\mathbf{s} _i + : \mathbf{r} _ i + } \oplus \mathfrak{n}^i (F)_{\mathbf{s} _ i + : \mathbf{r} _i + }.$ Le premier isomorphisme est induit par l'application $1+x \mapsto x$.
  L'espace $\mathfrak{g}(F)\simeq \mathrm{M}_N(F)$ est muni de la forme bilinéaire $(x,y)\mapsto \mathrm{ tr}(xy)$ et on a  $\mathfrak{n}^i(F)_{\mathbf{s}_i+} \subset \mathfrak{g}^i (F) ^{\perp}$ (\ref{nigl}).
  
  Soit $x\in \mathfrak{g}^i(F)_{\mathbf{s} _i +}, $ écrivons 
   $x=\pi _{\mf{g} _i}(x)+ \pi _{\mf{n} _i}(x)$ la décomposition ci dessus de $x$.

   On a alors \begin{align*}
  \hat{\phi _i}\mid _{G^{i+1}(F)_{y,\mathbf{s}_i}\cdots G^d(F)_{y,\mathbf{s}_{d-1}}}(1+x)&=\hat{\phi _i}\mid _{G^{i+1}(F)_{y,\mathbf{s}_i}\cdots G^d(F)_{y,\mathbf{s}_{d-1}}}(1+\pi_{\mf{g} _i} (x))\\&=\theta ^i \mid _{G^{i+1}(F)_{y,\mathbf{s}_i}\cdots G^d(F)_{y,\mathbf{s}_{d-1}}}(1+\pi_{\mf{g} _i} (x))\\&= \psi \circ \mathrm{tr}( c_i \pi_{\mf{g} _i} (x) ).
  \end{align*}\\
  D'autre part  $\theta ^i \mid _{G^{i+1}(F)_{y,\mathbf{s}_i}\cdots G^d(F)_{y,\mathbf{s}_{d-1}}}(1+x)=\psi \circ \mathrm{tr} (c_i x)$.
  
  Il suffit donc de voir que $\mathrm{tr}( c_i \pi_{\mf{g} _i} (x) )=\mathrm{tr} (c_i x)$, cela résulte du fait que $\pi _{\mf{n} _i}(x)\subset \mathfrak{g}^i (F) ^{\perp} = A_i ^{\perp} $ pour la forme bilinéaire symétrique $(x,y) \mapsto \mathrm{tr}(xy)$.

   ($ii$) immédiat d'après ($i$), \ref{factA}, \ref{factB}, et puisque dans le (Cas B) on a posé $\boldsymbol{\phi} _d = 1$.
   \end{proof}
   
   \begin{prop} La représentation ${}^{\circ}\lambda ={}^{\circ} \lambda(\overrightarrow{G},y,\overrightarrow{\mathbf{r}},\overrightarrow{\boldsymbol{\phi}} )$ de ${}^{\circ} K^d= J^0(\beta,\A)$ associée à $(\overrightarrow{G},y,\overrightarrow{\mathbf{r}},\overrightarrow{\boldsymbol{\phi}} )$ (cf §\ref{buku}) est une $\beta -extension$ de $\theta$.
   \end{prop}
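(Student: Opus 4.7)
La stratégie est d'appliquer la caractérisation des $\beta-$extensions donnée par la proposition \ref{beta}: il suffit de vérifier que $^{\circ}\lambda$ contient $\theta$, que $^{\circ}\lambda$ est entrelacée par $B^{\times}$, et que $\dim(^{\circ}\lambda)=[J^1(\beta,\A):H^1(\beta,\A)]^{1/2}$. On remarque au passage que, d'après la proposition \ref{grou}, $^{\circ}\lambda$ est bien une représentation du groupe $J^0(\beta,\A)={}^{\circ}K^d$, et que $B^{\times}=\mathrm{Aut}_{F[\beta]}(V)=G^0(F)$, ce qui relie directement l'entrelacement demandé à la structure de Levi tordue fournie par $\overrightarrow{G}$.

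Pour la première condition, on part de ${}^{\circ}\lambda={}^{\circ}\kappa_0\otimes\cdots\otimes{}^{\circ}\kappa_d$ et on la restreint au sous-groupe $K_+^d=H^1(\beta,\A)$. La proposition 4.4 de Yu rappelée ci-dessus entraîne que chaque $^{\circ}\kappa_i\mid_{K_+^d}$ est un multiple du caractère $\hat{\boldsymbol{\phi}}_i\mid_{K_+^d}$, si bien que $^{\circ}\lambda\mid_{K_+^d}$ est un multiple de $\prod_{i=0}^d\hat{\boldsymbol{\phi}}_i\mid_{K_+^d}$, qui est égal à $\theta$ d'après la proposition précédente. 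Donc $^{\circ}\lambda$ contient $\theta$.

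Pour la deuxième condition, la construction de Yu rappelée au §\ref{yu} est précisément conçue pour que chaque $\kappa_i$, obtenue via la représentation de Heisenberg associée au caractère $G^{i+1}$-générique $\boldsymbol{\phi}_i=\phi_i\circ\det$, soit entrelacée par le Levi $G^0(F)$ (c'est la propriété centrale exploitée dans \cite[§14]{Yu}). Par construction $\rho\mid_{G^0(F)_{y,0}}=1$, donc $\rho$ n'intervient pas dans $^{\circ}\lambda$, et le produit tensoriel $^{\circ}\lambda$ hérite de l'entrelacement par $G^0(F)=B^{\times}$.

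Pour la troisième condition, on a $\dim({}^{\circ}\lambda)=\prod_{i=0}^{d-1}\dim(\kappa_i)=\prod_{i=0}^{d-1}[J^{i+1}:J_+^{i+1}]^{1/2}$ (le facteur $\kappa_d$ étant un caractère de dimension $1$, et $\boldsymbol{\phi}_d=1$ dans le Cas B). Il s'agit donc de vérifier l'identité d'indices
\[
\prod_{i=0}^{d-1}[J^{i+1}:J_+^{i+1}]=[J^1(\beta,\A):H^1(\beta,\A)].
\]
Grâce aux identifications des propositions \ref{filtA} et \ref{filtB}, les deux côtés se calculent niveau par niveau le long de la filtration indexée par les $\mathbf{r}_i$, chaque facteur $[J^{i+1}:J_+^{i+1}]$ correspondant au quotient $G^{i+1}(F)_{y,\mathbf{s}_i}/G^{i+1}(F)_{y,\mathbf{s}_i+}$ modulo la partie déjà contenue dans $G^i(F)$, et $[J^1:H^1]$ se décomposant de façon analogue via la filtration bushnell-kutzkovienne $(1+\mathfrak{P}_i^{[\frac{r_i+1}{2}]})/(1+\mathfrak{P}_i^{[\frac{r_i}{2}]+1})$. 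L'obstacle principal est donc cette identification combinatoire des deux filtrations; je l'obtiendrais en appliquant terme à terme les isomorphismes de la proposition \ref{filtA} (resp.\ \ref{filtB}) aux quotients successifs, ce qui ramène l'égalité à l'isomorphisme $G^{i+1}(F)_{y,\mathbf{s}_i:\mathbf{s}_i+}\simeq\mathfrak{g}^{i+1}(F)_{y,\mathbf{s}_i:\mathbf{s}_i+}$ et à la somme directe $\mathfrak{g}^{i+1}(F)=\mathfrak{g}^i(F)\oplus\mathfrak{n}^i(F)$ déjà utilisée. Les trois conditions de la proposition \ref{beta} étant vérifiées, ${}^{\circ}\lambda$ est une $\beta-$extension de $\theta$.
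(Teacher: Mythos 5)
Votre proposition est correcte et suit essentiellement la même route que l'article : réduction aux trois conditions de la proposition \ref{beta}, le résultat 4.4 de Yu pour le fait que ${}^{\circ}\lambda$ contient $\theta=\prod\hat{\boldsymbol{\phi}}_i$, les résultats d'entrelacement de Yu pour $G^0(F)=B^{\times}$, et la réduction du calcul de dimension à l'identité d'indices $\prod_{i=1}^d[J^i:J^i_+]=[J^1(\beta,\A):H^1(\beta,\A)]$. La seule différence est que l'article établit cette dernière identité par une courte récurrence sur $d$ à partir des décompositions $J^1(\beta,\A)=G^0_{y,0+}J^1\cdots J^d$ et $H^1(\beta,\A)=G^0_{y,0+}J^1_+\cdots J^d_+$, avec des surjections explicites de noyau $J^i_+$, là où vous ne faites qu'esquisser l'identification niveau par niveau des deux filtrations.
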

   
   \begin{proof} D'après \ref{beta}, il suffit de montrer que:
   
   \begin{enumerate}
   \item[($a$)] $A_0 ^{\times}=G^0(F)$ entrelace ${}^{\circ} \lambda $ 
   \item[($b$)]  ${}^{\circ} \lambda $  contient $\theta= \prod\limits_{i=1}^d \hat{\boldsymbol{\phi}}$
   \item[($c$)]  $dim ({}^{\circ} \lambda) = dim (\eta) = [J^1(\beta, \A) : H^1(\beta, \A )]^{\frac{1}{2}}$
   \end{enumerate}
   
   ($a$) résulte de \cite[15.6]{Yu}
   
   ($b$) résulte de \cite[4.4]{Yu}
   
   ($c$)\footnote{Attention à ne pas confondre le groupe $J^1(\beta , \A ) $ défini dans \cite{BK} avec les groupes $J^i$ et $J^i_+$ pour $i=1$ défini dans \cite{Yu}} 
   
   ${}^{\circ} \lambda = {}^{\circ} \kappa_0 \otimes {}^{\circ} \kappa _1 \otimes \cdots \otimes {}^{\circ} \kappa _d$
   
   La représentation $\kappa _i$ provient d'une représentation de Heisenberg pour $0\leq i \leq d-1$, sa dimension est $[J^{i+1} : J^{i+1} _+]^{\frac{1}{2}}$  ( cf § \ref{yu}).
   
     $dim (^{\circ} \kappa _i)= dim (\kappa _i) = [J^{i+1} : J^{i+1} _+]^{\frac{1}{2}} $ pour $0\leq i \leq d-1$
   
   $dim (^{\circ} \kappa _i)= 1 $ pour $i=d$
   
   Donc $dim ({}^{\circ} \lambda) = \prod\limits_{i=0}^{d-1} [J^{i+1} : J^{i+1} _+]^{\frac{1}{2}}=\prod\limits_{i=1}^{d} [J^{i} : J^{i} _+]^{\frac{1}{2}}$
   
   Il suffit donc de montrer que $\prod\limits_{i=1}^d [J^i : J^i _+]=[J^1(\beta, \A) : H^1(\beta, \A )]$

  On a (\cite[§4]{Yu})  $J^1(\beta , \A)= G^0_{y,0+}J^1 \cdots J^{d-1}J^{d}$ 
   et $ H^1(\beta, \A )= H^1(\beta _0 , \A) =  G^0_{y,0+}J^1_+\cdots J^{d-1}_+J^d_+$.

   Il suffit donc de montrer que $\prod\limits_{i=1}^d [J^i : J^i _+]=[G^0_{y,0+}J^1\cdots J^{d-1}J^d :  G^0_{y,0+}J^1_+\cdots J^{d-1}_+J^d_+ )]$
   
   Montrons cela par récurrence sur $d$.
   
   Si $d=1$ alors $J^1(\beta , \A)= G^0_{y,0+} G^1_{y,s_0} = G^0_{y,0+}J^1$ et $H^1(\beta , \A) =G^0 _{y,0+} G^1_{y,s_0+}=G^0_{y,0+}J^1_+$ et l'application $J^1 \to G^0_{y,0+}J^1 / G^0_{y,0+}J^1_+$ est surjective de noyau $J^1 \cap G^0_{y,0+}J^1_+ =J^1_+$. Donc la propriété est vrai au rang $d=1$.
   
   Supposons la propriété vrai au rang $d-1$.
   
   On a  donc $\prod\limits_{i=1}^{d-1} [J^i : J^i _+]=[G^0_{y,0+}J^1\cdots J^{d-1} :  G^0_{y,0+}J^1_+\cdots J^{d-1}_+ )]$
  
  Le morphisme de groupe 
  \begin{center}$J^d \to \left( (G^0_{y,0+}J^1\cdots J^{d-1}J^d) /  (G^0_{y,0+}J^1_+\cdots J^{d-1}_+J^d_+ ) \right) /\left( (G^0_{y,0+}J^1\cdots J^{d-1}) /  (G^0_{y,0+}J^1_+\cdots J^{d-1}_+ ) \right)$\end{center} est surjectif de noyau  $J^d_+$.
  
  d'où ($c$).

\end{proof}

Puisque la strate $[\A,n,0,\beta] $ est supposée maximale, on a $J^0(\beta, \A) / J^1(\beta , \A) \simeq \A _0 ^{\times} / \left(1+\mathfrak{P}_0 \right)\simeq  \mathrm{GL}(f,k_E)$. Soit $\sigma$ une représentation irréductible cuspidale de $\mathrm{GL}(f,k_E)$.

Il existe une unique représentation $\rho$ de $E^{\times} \A _0 ^{\times} = G^0(F)_{[y]}$ qui prolonge $\sigma$, de plus $\mathrm{c-ind}_{G^0(F)_{[y]}}^{G^0(F)} (\rho)$ est irréductible et supercuspidale.

Le quintuplet ($\overrightarrow{G},y,\rho, \overrightarrow{\mathbf{r}}, \overrightarrow{\boldsymbol{\phi}}$) est une donnée de Yu générique, en effet :
\begin{enumerate}
\item[($\overrightarrow{G}$)]  est une suite de Levi tordue modérée tel que $Z(G^0)/Z(G)$ est anisotrope 
\item[($y$)] est un sommet de $\mathcal{B}(G^0,F)$ puisque $\A _0$ est un $\mathfrak{o}_{E_0}$-ordre maximal

\item[($ \overrightarrow{\mathbf{r}}$)] satisfait bien la condition désirée, dans le (Cas A) on a $0<\mathbf{r}_0 <\mathbf{r} _1 <\ldots <\mathbf{r}_{d-1} <\mathbf{r} _{d}$ et dans le (cas B) on a $0<\mathbf{r}_0 <\mathbf{r} _1 <\ldots <\mathbf{r}_{d-1} =\mathbf{r} _{d}$.

\item[($\rho$)] vérifie la condition requise d'après ce qui précède ci-dessus

\item[($\overrightarrow{\boldsymbol{\phi}})$]  satisfait les propriétés voulues en vertu de \ref{cage} et du fait que dans le (cas B) on a $\mathbf{r}_{d-1} =\mathbf{r} _{d}$ et $\boldsymbol{\phi} _d =1$.

\end{enumerate}

Ainsi, ${}^{\circ} \rho _d (\overrightarrow{G},y,\rho,\overrightarrow{\mathbf{r}} , \overrightarrow{\boldsymbol{\phi}})$ est un type simple  et $\rho _d (\overrightarrow{G},y,\rho,\overrightarrow{\mathbf{r}}, \overrightarrow{\boldsymbol{\phi}})$ est égale à la représentation $\Lambda$ de $E^{\times } J^0(\beta , \A)$ correspondant au type simple $\sigma \otimes {}^{\circ} \lambda$.

\begin{rema} La comparaison que l'on a faite entre les constructions des représentations supercuspidales de $\mathrm{GL}_N$ de Bushnell-Kutzko (\cite{BK}) et Yu (\cite{Yu}) devrait pouvoir  se prolonger aux constructions de Blondel-Blasco (\cite{blbl}), Bushnell-Kutzko (\cite{bukusl}), Ngô (\cite{vdng}), Sécherre et Stevens (\cite{Sech}, \cite{Stev}).

De plus on devrait pouvoir comparer de la même manière les constructions de types généraux au sens de  Bushnell-Kutzko \cite{bkty}. On pourrait ainsi comparer  \cite{bkge} et \cite{goro} avec \cite{ykge}.
\end{rema}

\bibliographystyle{plain-fr}

\bibliography{library}

 \end{document}